\crefname{hypothesis}{Hypothesis}{Hypotheses}
\title{Low Rank Convex Clustering For Matrix-Valued Observations}
\author{Meixia Lin\thanks{Engineering Systems and Design, Singapore University of Technology and Design
  (\email{meixia\_lin@sutd.edu.sg}).}
\and Yangjing Zhang\thanks{Institute of Applied Mathematics, Academy of Mathematics and Systems Science, Chinese Academy of Sciences
  (\email{yangjing.zhang@amss.ac.cn}).}
}
\newcommand{\leqnomode}{\tagsleft@true\let\veqno\@@leqno}
\begin{document}

\maketitle

\begin{abstract}
Common clustering methods, such as $k$-means and convex clustering, group similar vector-valued observations into clusters. However, with the increasing prevalence of matrix-valued observations, which often exhibit low rank characteristics, there is a growing need for specialized clustering techniques for these data types. In this paper, we propose a low rank convex clustering model tailored for matrix-valued observations. Our approach extends the convex  clustering model originally designed for vector-valued data to classify matrix-valued observations. Additionally, it serves as  a convex relaxation of the low rank $k$-means method proposed by Z. Lyu, and D. Xia (arXiv:2207.04600). Theoretically, we establish exact cluster recovery for finite samples and asymptotic cluster recovery as the sample size approaches infinity. We also give a finite sample bound on prediction error in terms of centroid estimation,  and further establish the prediction consistency. To make the model practically useful, we develop an efficient double-loop algorithm for solving it. Extensive numerical experiments are conducted to show the effectiveness of our proposed model.
\end{abstract}

\begin{keywords}
convex clustering, exact cluster recovery, finite sample error bound,  augmented Lagrangian method, semismooth Newton
\end{keywords}

\begin{MSCcodes}
90C90, 
90C25, 
62H30 
\end{MSCcodes}

\section{Introduction}
Clustering is an essential task in the realm of unsupervised learning, with applications across many domains. One class of clustering methods formulates the task as a nonconvex optimization problem, with $k$-means clustering being a notable example. Although $k$-means is straightforward to implement (often implemented via Lloyd's algorithm \cite{lloyd1982least}), it is known for its sensitivity to initialization, which can affect the stability of clustering outcomes.  An alternative approach involves solving convex relaxations of nonconvex clustering problems. A prominent method in this category is convex clustering \cite{pelckmans2005convex,lindsten2011clustering,hocking2011clusterpath}, also known as sum-of-norms clustering or clusterpath clustering. Convex clustering methods have become popular for their stability and favorable theoretical properties. Numerous studies have explored exact recovery guarantees, statistical properties, and optimization algorithms for convex clustering and its variants \cite{zhu2014convex,chi2015splitting,tan2015statistical,panahi2017clustering,wang2018sparse,jiang2020recovery,sun2021convex,chakraborty2023biconvex}. However, these works primarily focus on vector-valued data, grouping similar vectors into clusters. Meanwhile, matrix-valued observations are increasingly common in fields such as image processing and spatial-temporal analysis, where data naturally takes the form of matrices. Consequently, developing methods for clustering matrix-valued observations has become essential in these applications.

A straightforward method for clustering matrix-valued observations is to first convert each matrix into a long vector and then apply clustering methods designed for vector-valued data. A typical example is clustering handwritten digit images by methods like $k$-means and convex clustering. However, vectorizing matrices overlooks structural information inherent in the matrix form, such as row-wise and column-wise dependencies and the potential for low rank structure. In fact, many matrix-valued data sets are either low rank or can be well approximated by low rank matrices, especially in high dimensional settings \cite{lyu2022optimal,lyu2023optimal}. Motivated by this, we propose a method that directly leverages the matrix structure, avoiding the need for vectorization, and incorporates a regularization term to promote low rank structure in the data.

To this end, we propose the low rank convex clustering (lrCC) model for clustering $n$ matrix-valued observations $A_1,\dots,A_n$ with $d_1 \times d_2$ features:
\begin{equation}\label{model-convex}
  \min_{X_1,\dots,X_n\in \mathbb{R}^{d_1\times d_2}}\ \frac{1}{2}\sum_{i=1}^{n} \|X_i-A_i\|_F^2 + \gamma_1 \sum_{l(i,j)\in \mathcal{E}} w_{ij} \|X_i-X_j\|_F + \gamma_2 \sum_{i=1}^{n}\|X_i\|_*.
\end{equation}
Here, the penalty parameters $\gamma_1 > 0$ and $\gamma_2 > 0$, along with the edge set $\mathcal{E}$ and the positive edge weights $\{w_{ij}\}_{l(i,j)\in \mathcal{E}}$, are given. The notation $\|\cdot\|_F$ and $\|\cdot\|_*$ denote the Frobenius and nuclear norms, respectively. We set $w_{ij}=0$ if $l(i,j)\notin \mathcal{E}$. Without loss of generality, we assume $d_1 \geq d_2$. Problem \eqref{model-convex} is strongly convex, with a unique optimal solution denoted as $(X_1^*, \ldots, X_n^*)$. We refer to $X_i^*$ as the centroid of observation $A_i$. Observations $A_i$ and $A_j$ are considered to belong to the same cluster if and only if they share the same centroid, i.e., $X_i^* = X_j^*$. The terms in \eqref{model-convex} serve distinct purposes: the first term keeps centroids close to their corresponding observations, the second term encourages identical centroids for linked observations (linked by the edges $l(i,j)\in \mathcal{E}$), and the third term promotes low rank structures in the centroids. We can interpret the lrCC model \eqref{model-convex} from two perspectives. First, it extends the convex clustering model (e.g., \cite[(2)]{lindsten2011clustering}, \cite[(4)]{hocking2011clusterpath}, \cite[(1.1)]{chi2015splitting}, and \cite[(2)]{sun2021convex}) designed for vector-valued data to classify matrix-valued observations, accommodating the possibility of low rank centroids. The additional regularization term $\|\cdot\|_*$ for promoting low-rankness on each centroid introduces non-trivial challenges in both algorithmic design and theoretical analysis. Notably, if no low rank structure is imposed and $\gamma_2 = 0$, \eqref{model-convex} reduces to the original vector based model in dimension $d:=d_1d_2$. Second, the recent works \cite{lyu2022optimal,lyu2023optimal} assume that matrix-valued observations within the same cluster share a common low rank expectation and proposes a low rank variant of the classic $k$-means method. Specifically, it adapts the classic $k$-means approach by adding a low rank approximation step during the cluster center updates \cite[Algorithm~1]{lyu2022optimal}. As mentioned earlier, the lrCC model similarly assumes that the centroids of matrix-valued observations are low rank, aligning in spirit with the low rank assumption in \cite{lyu2022optimal,lyu2023optimal}. This shared emphasis on low rank structures highlights the importance of leveraging matrix-specific properties for clustering.  Furthermore, the lrCC model can be viewed as a convex relaxation of their low rank $k$-means method \cite[Algorithm~1]{lyu2022optimal}. Notably, convex clustering has also been interpreted by some researchers as a convex relaxation of $k$-means \cite{lindsten2011just,tan2015statistical}.

The theoretical properties of convex clustering can be categorized into two main aspects: (exact or asymptotic) cluster recovery, and prediction error bounds for centroid estimation. Exact cluster recovery refers to the ability to perfectly identify the true clusters. The first result on exact cluster recovery was given in \cite{zhu2014convex}, showing that the convex clustering can perfectly recover two clusters, each of which is a cube, under specific conditions. This property was later generalized in \cite{panahi2017clustering} to an arbitrary number of clusters, though their analysis was limited to uniformly weighted fully connected graphs  (i.e., $w_{ij}=1$ for all $i<j$). Subsequently, the work \cite{sun2021convex} extended the exact recovery property to more general weighted graphs with non-negative weights ($w_{ij} \geq 0$), subject to certain connectivity conditions. Notably, these exact cluster recovery results apply only to finite sample sizes ($n < \infty$), as perfect classification becomes unattainable when the sample size grows infinitely large ($n\to \infty$). This limitation has been addressed by asymptotic cluster recovery in large sample settings. In particular, the paper \cite{jiang2020recovery} studied convex clustering for data generated from a Gaussian mixture model as $n\to \infty$. They showed that convex clustering can correctly classify samples lying within a fixed distance of their cluster means with high probability, a property referred to as asymptotic cluster recovery. On the other hand, the prediction error bounds for centroid estimation have been explored in \cite[Section~3.2]{tan2015statistical} and \cite[Section~3]{wang2018sparse}.  These works established the prediction consistency of convex clustering and its variants, demonstrating that under specific conditions, the estimated centroids converge to their true values as the sample size increases.

Our work represents an early effort to extend the vector based convex clustering model to the matrix setting, accommodating the possibility of low rank centroids. However, the theoretical properties mentioned in the previous paragraph are specific to the vector based convex clustering model and do not directly apply to the lrCC model \eqref{model-convex}. To bridge this gap, we aim to establish analogous theoretical properties for the lrCC model, overcoming the non-trivial challenges posed by the nuclear norm regularization. Specifically, we provide results on exact and asymptotic cluster recovery in Section~\ref{sec:the_gua} and derive prediction error bounds in Section~\ref{sec:bound_pre_err}. Additionally, we have not noticed efficient optimization methods specifically designed for solving \eqref{model-convex}, except for some possible general optimization solvers. Inspired by the optimization method for solving the vector based convex clustering model in \cite[Section~5]{sun2021convex}, we design an efficient double-loop algorithm for solving \eqref{model-convex} in Section~\ref{sec:opt_method}. We summarize the main contributions of the paper in three-fold: (i) We establish the exact cluster recovery for finite samples  and the asymptotic cluster recovery as the sample size approaches infinity. This bridges the gap that existing  theoretical properties of convex clustering are mainly for vector based models. (ii) We give a finite sample bound on prediction error in terms of centroid estimation, showing that the centroids estimated by the lrCC model will converge to the truth as the sample size goes to infinity. (iii) We design an efficient double-loop algorithm for solving the lrCC model, making the model practically useful for clustering tasks.

The rest of the paper is outlined below. Theoretical guarantees for cluster recovery of the lrCC model \eqref{model-convex} are established in Section~\ref{sec:the_gua}. The bounds on the prediction error of centroids estimated by \eqref{model-convex} are given in Section~\ref{sec:bound_pre_err}. In Section~\ref{sec:opt_method}, we design an efficient double-loop algorithm for solving \eqref{model-convex}. Numerical experiments are presented in Section~\ref{sec:num_exp}, and we conclude the paper in Section~\ref{sec:conclu}.

\vspace{0.2cm}
\noindent {\bf Notation.} For any matrix $Z\in \mathbb{R}^{m\times n}$, let $\sigma_{\min}(Z)$ denote the minimum nonzero singular value of $Z$, $\sigma_{\max}(Z)$ denote the maximum singular value of $Z$, and $Z^{\dagger}$ denote the Moore-Penrose pseudo-inverse of $Z$. Let $[n]=\{1,2,\cdots,n\}$. We say $\mathcal{I}_1,\dots,\mathcal{I}_G$ is a partition of $[n]$ if $\cup_{i=1}^G \mathcal{I}_i = [n]$ and $\mathcal{I}_i \cap \mathcal{I}_j = \emptyset$ for $i\neq j$. $\|\cdot\|$ denotes the $\ell_2$ norm of a vector; $\|\cdot\|_F$ denotes the Frobenius norm of a matrix; $\|\cdot\|_*$ denotes the nuclear norm of a matrix; $\|\cdot\|_2$ denotes the spectral norm of a matrix. $o(\cdot)$ and $O(\cdot)$ stand for the standard small O and big O notation, respectively. ${\rm vec} (Z)$ denotes the column vector obtained by stacking the columns of the matrix $Z$ on top of one another. For a vector $x\in\mathbb{R}^{mn}$, we use the notation ${\rm mat}(x,m,n)$ to denote the matrix obtained by reshaping $x$ into an $m \times n$ matrix such that ${\rm vec}({\rm mat}(x,m,n)) = x$. For two matrices $A$ and $B$, we use the notation $[A;B]$ to denote the matrix obtained by appending $B$ below the last row of $A$ when they have identical number of columns, and similarly we use the notation $[A,B]$ to denote the matrix obtained by appending $B$ to the last column of $A$ when they have identical number of rows. $\otimes$ denotes the tensor product. A random variable $x\in\mathbb{R}$ is said to be sub-Gaussian with mean zero and variance $\sigma^2$ if $\mathbb{E}[\exp(sx)] \leq \exp(\frac{\sigma^2s^2}{2})$ for any $s\in\mathbb{R}$. For such a sub-Gaussian variable $x$, it holds that $\mathbb{P}(|x| > t) \leq 2 \exp (-\frac{t^2}{2\sigma^2})$ for any $t>0$.

\section{Theoretical Guarantees for Cluster Recovery}
\label{sec:the_gua}
We provide theoretical guarantees for the recovery of cluster memberships of the lrCC model \eqref{model-convex}. First, we establish the exact recovery guarantee for general low rank matrix-variate clustering problems. Next, we focus on the asymptotic cluster recovery behavior of our model as the sample size tends to infinity. Specifically, we show that for a mixture of Gaussians, the lrCC model \eqref{model-convex}  effectively labels points located within the neighborhoods of the low rank means.

\subsection{Exact Cluster Recovery for Finite Samples}
\label{sec:exact_clu}
In this subsection, we establish the exact recovery guarantees for the lrCC model \eqref{model-convex} in the context of a finite sample setting. Specifically, we prove that under certain conditions, the lrCC model \eqref{model-convex} can accurately recover the true cluster memberships with properly chosen penalty parameters. Notably, by setting the parameter $\gamma_2$ before the nuclear norm to zero, our result can be refined to align with the weighted convex clustering model in \cite{sun2021convex}. To the best of our knowledge, our results provide the most general and comprehensive exact recovery guarantees available in the literature. Building on the weighted convex clustering model in \cite{sun2021convex}, we introduce an additional nuclear norm penalty term to promote low-rankness when the true underlying centroids are low rank. This enhancement makes our model and analysis more flexible for capturing complex data structures. The exact cluster recovery guarantee of the lrCC model \eqref{model-convex} is provided in the following theorem, with its proof provided in Section \ref{sec:proof_exact_recovery}.

\begin{theorem}\label{thm:exact_recovery}
Let $A_1, \ldots, A_n \in \mathbb{R}^{d_1 \times d_2}$ be the observations, partitioned into $K$ underlying true clusters. Denote $\mathcal{I}_{\alpha} = \{i \in [n] \mid A_i \mbox{ belongs to cluster } \alpha\}$, for each $\alpha \in [K]$. We apply the lrCC model \eqref{model-convex} for cluster assignment, and we  define
\begin{align*}
\begin{aligned}
& A^{(\alpha)} := \frac{1}{|\mathcal{I}_{\alpha}|}\sum_{i \in \mathcal{I}_{\alpha}} A_i, \quad
&&\Delta  := \min_{\alpha,\beta\in [K], \alpha\neq \beta} \|A^{(\alpha)}-A^{(\beta)}\|_F,\\
&    \eta_{ij}^{(\alpha)} := \frac{1}{w_{ij}} \sum\limits_{\beta\in [K]\backslash \{\alpha\}} \Big|\sum_{m\in \mathcal{I}_{\beta}}(w_{im} - w_{jm})\Big|, \quad
&& w_{\max} := \max_{\alpha \in[K]} \frac{2}{|\mathcal{I}_{\alpha}|} \sum_{i\in \mathcal{I}_{\alpha}, j\in [n]\backslash \mathcal{I}_{\alpha}} w_{ij}.
\end{aligned}
\end{align*}

\begin{itemize}[left=5pt, labelsep=3pt, itemsep=2pt]
\item[{\rm (i)}]
For any $\alpha\in [K]$, the observations in $\mathcal{I}_{\alpha}$ are assigned to the same cluster if conditions \eqref{cond-a} and \eqref{eq:gamma1min} hold:
\begin{center}
\begin{minipage}{0.95\textwidth}
\vspace{-0.2cm}
\begin{equation}\leqnomode
\hspace{-0.2cm} \text{The induced subgraph of } \mathcal{G} \text{ on } \mathcal{I}_{\alpha} \text{ is a clique, and }   \max_{i,j \in \mathcal{I}_{\alpha}}\eta_{ij}^{(\alpha)} < |\mathcal{I}_{\alpha}| ,  \alpha \in [K]; \tag{a} \label{cond-a}
\end{equation}

\vspace{-0.7cm}
\begin{equation}\leqnomode
\gamma_1 \geq \gamma_{1,\min} := \max_{\alpha\in [K]}\max_{  \substack{i,j\in \mathcal{I}_{\alpha},\\ i\neq j} } \frac{\|A_i-A_j\|_F}{ w_{ij}\left(|\mathcal{I}_{\alpha}| -\eta_{ij}^{(\alpha)}\right)}. \tag{b} \label{eq:gamma1min}
\end{equation}
\end{minipage}
\end{center}

\item[{\rm (ii)}]
For any distinct $\alpha, \beta \in [K]$,  observations in $\mathcal{I}_{\alpha}$ and $\mathcal{I}_{\beta}$ are assigned to two different clusters if condition   \eqref{eq:gamma12}, in addition to \eqref{cond-a} and \eqref{eq:gamma1min}, holds:
\begin{center}
\begin{minipage}{0.95\textwidth}
\vspace{-0.2cm}
\begin{equation}\leqnomode
\hspace{-0.2cm} A^{(\alpha)} \neq A^{(\beta)} \ \forall \ \alpha \neq \beta,  \text{ and } \ \gamma_1 w_{\max} + \gamma_2 \sqrt{d_2} < \Delta.
\tag{c} \label{eq:gamma12}
\end{equation}
\end{minipage}
\end{center}
\end{itemize}
\end{theorem}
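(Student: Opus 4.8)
The plan is to exploit strong convexity of \eqref{model-convex}: since the minimizer is unique, it suffices to exhibit one candidate point together with a dual certificate (subgradients) satisfying the first-order optimality conditions; that candidate must then be the solution, and we read the cluster structure off it. The optimality condition for observation $i$ reads $0 = (X_i-A_i) + \gamma_1\sum_j w_{ij}V_{ij} + \gamma_2 W_i$, where each $V_{ij}\in\partial\|X_i-X_j\|_F$ is antisymmetric ($V_{ij}=-V_{ji}$, with $\|V_{ij}\|_F\le1$ when $X_i=X_j$ and $V_{ij}=(X_i-X_j)/\|X_i-X_j\|_F$ otherwise) and $W_i\in\partial\|X_i\|_*$ (so $\|W_i\|_2\le1$). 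I would run both parts through this certificate.

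For part (i), I would first pass to the cluster-level reduced problem obtained by forcing $X_i=Y^{(\alpha)}$ for all $i\in\mathcal I_\alpha$, with aggregated weights $\omega_{\alpha\beta}=\sum_{i\in\mathcal I_\alpha,j\in\mathcal I_\beta}w_{ij}$. This is again strongly convex, giving a unique $Y^{(1)},\dots,Y^{(K)}$ with multipliers $U_{\alpha\beta}\in\partial\|Y^{(\alpha)}-Y^{(\beta)}\|_F$ and $\tilde W^{(\alpha)}\in\partial\|Y^{(\alpha)}\|_*$. I propose the candidate $\hat X_i:=Y^{(\alpha)}$ for $i\in\mathcal I_\alpha$, taking the between-cluster subgradients $V_{ij}:=U_{\alpha\beta}$ and nuclear subgradients $W_i:=\tilde W^{(\alpha)}$. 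Verifying the full optimality conditions then reduces to producing within-cluster subgradients solving $\gamma_1\sum_{j\in\mathcal I_\alpha}w_{ij}V_{ij}=b_i$ for an explicit right-hand side $b_i$ built from $A_i-A^{(\alpha)}$ and the cross-cluster terms, which sums to zero over $\mathcal I_\alpha$ (consistency follows because summing the candidate conditions reproduces the reduced optimality condition). The clique hypothesis in \eqref{cond-a} guarantees every within-cluster edge is present, so on the complete graph I use the potential construction $V_{ij}=(b_i-b_j)/(\gamma_1 w_{ij}|\mathcal I_\alpha|)$, which is automatically antisymmetric and satisfies the divergence equations. The only remaining requirement is $\|V_{ij}\|_F\le1$; estimating $\|b_i-b_j\|_F\le\|A_i-A_j\|_F+\gamma_1 w_{ij}\eta_{ij}^{(\alpha)}$ (using $\|U_{\alpha\beta}\|_F\le1$ and the definition of $\eta_{ij}^{(\alpha)}$) converts this into $\|A_i-A_j\|_F\le\gamma_1 w_{ij}(|\mathcal I_\alpha|-\eta_{ij}^{(\alpha)})$, i.e.\ exactly \eqref{cond-a} (positivity of the factor) together with \eqref{eq:gamma1min}.

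For part (ii), part (i) identifies the true centroids as the $Y^{(\alpha)}$, so I must show $Y^{(\alpha)}\ne Y^{(\beta)}$ for $\alpha\ne\beta$. I would argue by contradiction: assuming $Y^{(\alpha)}=Y^{(\beta)}$, I bound each centroid's deviation from its sample mean directly from the reduced optimality condition $|\mathcal I_\alpha|(Y^{(\alpha)}-A^{(\alpha)})=-\gamma_1\sum_{\beta'\ne\alpha}\omega_{\alpha\beta'}U_{\alpha\beta'}-\gamma_2|\mathcal I_\alpha|\tilde W^{(\alpha)}$. The fusion contribution is controlled by $\|U_{\alpha\beta'}\|_F\le1$ and the definition of $w_{\max}$, yielding $\tfrac12\gamma_1 w_{\max}$, while the nuclear contribution is controlled by the spectral-to-Frobenius estimate $\|\tilde W^{(\alpha)}\|_F\le\sqrt{\mathrm{rank}(\tilde W^{(\alpha)})}\,\|\tilde W^{(\alpha)}\|_2\le\sqrt{d_2}$, valid since any $d_1\times d_2$ matrix has rank at most $d_2$ and $\|\tilde W^{(\alpha)}\|_2\le1$. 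Feeding these into the triangle inequality $\Delta\le\|A^{(\alpha)}-A^{(\beta)}\|_F\le\|A^{(\alpha)}-Y^{(\alpha)}\|_F+\|Y^{(\beta)}-A^{(\beta)}\|_F$ contradicts $\gamma_1 w_{\max}+\gamma_2\sqrt{d_2}<\Delta$ from \eqref{eq:gamma12}, forcing $Y^{(\alpha)}\ne Y^{(\beta)}$ and hence distinct clusters.

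The main obstacle I anticipate is in part (ii): tracking the nuclear-norm multiplier tightly in the Frobenius norm and combining it with the fusion term so that the separation threshold collapses to exactly $\gamma_1 w_{\max}+\gamma_2\sqrt{d_2}$. The spectral-to-Frobenius passage is what introduces the $\sqrt{d_2}$ factor and is the ingredient genuinely new to the matrix (nuclear-norm) setting, so keeping the deviation bound per cluster at the level of $\tfrac12(\gamma_1 w_{\max}+\gamma_2\sqrt{d_2})$, rather than a looser term-by-term estimate, is the delicate point; by contrast, the flow construction underlying part (i) is routine once the clique hypothesis is in force.
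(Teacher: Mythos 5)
Your part~(i) is essentially the paper's own argument: the paper proves it as Proposition~\ref{prop: connection_two_problems}, reducing to the cluster-level problem \eqref{eq: cluster_opt}, lifting its multipliers to the between-cluster and nuclear subgradients, and constructing the within-cluster subgradients by exactly your potential formula $U_{ij}^{*}=\frac{1}{|\mathcal I_\alpha|w_{ij}}\bigl[\frac{A_i-A_j}{\gamma_1}-(T_i^{(\alpha)}-T_j^{(\alpha)})\bigr]$, with the same norm estimate via $\eta_{ij}^{(\alpha)}$, the clique hypothesis, and \eqref{eq:gamma1min}. That part is correct and matches the paper.

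Part~(ii) has a genuine gap, and it is precisely the one you flagged without resolving. Your term-by-term estimate gives, per cluster, $\|A^{(\alpha)}-Y^{(\alpha)}\|_F\le\frac12\gamma_1 w_{\max}+\gamma_2\|\tilde W^{(\alpha)}\|_F\le\frac12\gamma_1 w_{\max}+\gamma_2\sqrt{d_2}$, so the triangle inequality only yields $\Delta\le\gamma_1 w_{\max}+2\gamma_2\sqrt{d_2}$, which does not contradict \eqref{eq:gamma12}. Moreover, the per-cluster nuclear contribution cannot be tightened to $\frac12\gamma_2\sqrt{d_2}$ as you hope: when $Y^{(\alpha)}=0$, every $W$ with $\|W\|_2\le1$ lies in $\partial\|Y^{(\alpha)}\|_*$, and $\|W\|_F$ can equal $\sqrt{d_2}$ exactly, so no individual bound on $\|\tilde W^{(\alpha)}\|_F$ closes the factor of two. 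The two clusters must be treated jointly. The paper does this in Proposition~\ref{prop: neq} by rewriting the stationarity condition of \eqref{eq: cluster_opt} as $X^{(\alpha),*}={\rm Prox}_{\gamma_2\|\cdot\|_*}(A^{\Gamma,\alpha}-B^{\Gamma,\alpha})$, where $B^{\Gamma,\alpha}$ collects the fusion terms; the assumption $X^{(\alpha),*}=X^{(\beta),*}$ then forces the two shifted means into the same fiber of the proximal map, and Proposition~\ref{prop: nuc2}(b) bounds the Frobenius distance between two points of a common fiber by a single $\gamma_2\sqrt{d_2-r}$. That prox-collision lemma, applied to the difference $(A^{\Gamma,\alpha}-B^{\Gamma,\alpha})-(A^{\Gamma,\beta}-B^{\Gamma,\beta})$ rather than to each residual separately, is the ingredient your outline is missing; without it your argument only establishes part~(ii) under the strictly stronger condition $\gamma_1 w_{\max}+2\gamma_2\sqrt{d_2}<\Delta$.
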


\begin{remark}
Theorem~\ref{thm:exact_recovery}(i) asserts that the lrCC model \eqref{model-convex} is capable of {\it merging} points from the same cluster under conditions \eqref{cond-a} and \eqref{eq:gamma1min}, which we refer to as the {\it merging ability} of lrCC. To guarantee the {\it merging ability}, a lower bound for $\gamma_1$, as in \eqref{eq:gamma1min}, is necessary. This requirement is intuitive, as $\gamma_1$ penalizes differences in $\|X_i - X_j\|_F$, $(i,j)\in {\cal E}$. When $\gamma_1$ is sufficiently large, all $X_i$'s will merge into a single cluster. On the other hand, Theorem~\ref{thm:exact_recovery}(ii) guarantees that lrCC is capable of {\it distinguishing} points from different clusters, which we refer to as its {\it distinguishing ability}. This ability is guaranteed under condition \eqref{eq:gamma12}, which stipulates that the penalty parameters $\gamma_1$ and $\gamma_2$ must not exceed a specified upper bound.
\end{remark}

Next, we further verify and clarify Theorem~\ref{thm:exact_recovery} via simple numerical experiments.
For the well constructed data presented below,  we demonstrate the existence of a ``perfect'' region in the $(\gamma_1, \gamma_2)$ space, where lrCC exhibits both merging and distinguishing abilities. This enables the method to perfectly recover the true cluster memberships. This ``perfect'' region is constrained by condition \eqref{eq:gamma1min}:  $\gamma_1 \geq \gamma_{1,\min}$, and condition \eqref{eq:gamma12}: $\gamma_1 w_{\max} + \gamma_2 \sqrt{d_2} < \Delta $. Additionally, we explain the consequences when the parameters $\gamma_1$ and $\gamma_2$ fail to satisfy one or both of the two conditions --- specifically, when lrCC may lose its merging ability, its distinguishing ability, or both.

We construct synthetic data with four low rank centroids, which are given by $ M_i = U_i{\rm Diag}( C_{1i},  C_{2i})  V_i^{\intercal}$, $i\in [4]$, where $C_{i1},C_{i2} \sim N(0,1)$,
$U_i\in\mathbb{R}^{20\times 2}$ and $V_i\in\mathbb{R}^{10\times 2}$ are the top $2$ left and right singular vectors of a random matrix in $\mathbb{R}^{20\times 10}$ with entries drawn from $N(0,1)$.
The observations are then generated with noise as $A_i = M_{ s_i^*} + 0.1E_i,\ i\in[n]$, where $s_i^*$ is the true underlying cluster label, and each entry of $E_i$ is drawn from $N(0,1)$.
We generate $50$ samples for each cluster and visualize the data using principal component analysis (PCA), projecting it onto a two-dimensional space.  Figure~\ref{fig:verify1}(b) visualizes the data using PCA and the data points are colored according to their cluster labels.

\vspace{-0.35cm}
\begin{figure}[H]
\centering
\subfigure[Four regions in $(\gamma_1,\gamma_2)$ space]{\hspace{-25pt}
{\includegraphics[width=0.33\textwidth]{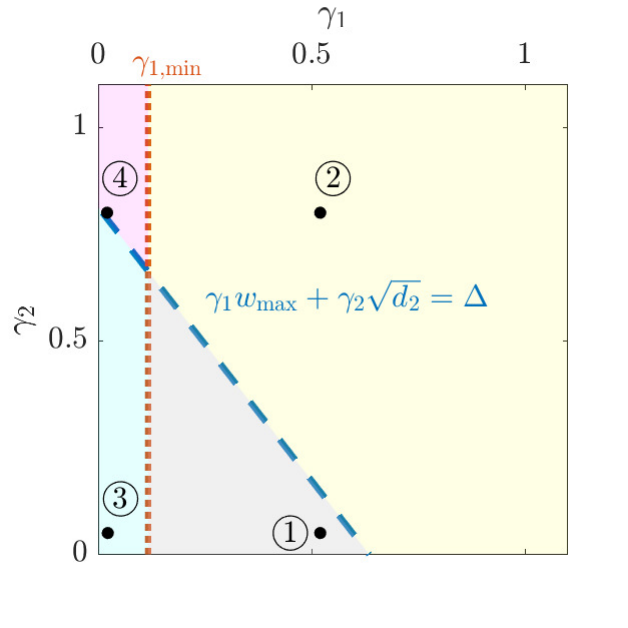}}
}
\hspace{0.02\textwidth}
\subfigure[PCA visualization of true labels]{
    \includegraphics[width=0.28\textwidth]{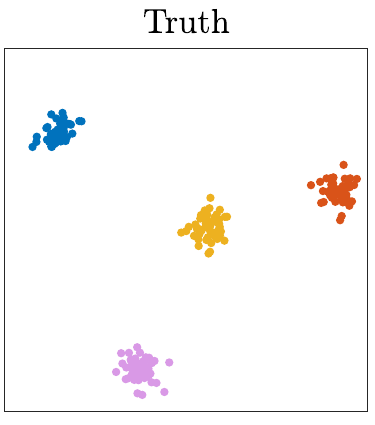}
}
\hspace{0.02\textwidth}
\subfigure[PCA visualization of lrCC with $(\gamma_1,\gamma_2)\!\!=\!\! (0.5,0.05)$ at $\textcircled{1}$]{
    \includegraphics[width=0.28\textwidth]{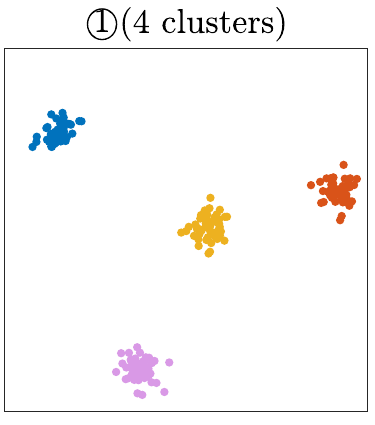}
}
\\[-2mm]
\subfigure[PCA visualization of lrCC with $(\gamma_1,\!\gamma_2)\!\!=\!\! (0.5,0.8)$ at $\textcircled{2}$]{
    \includegraphics[width=0.28\textwidth]{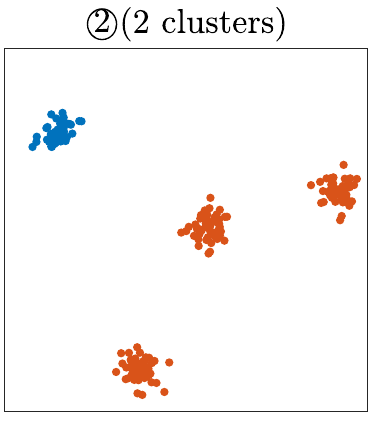}
}
\hspace{0.02\textwidth}
\subfigure[PCA visualization of lrCC with $\!(\!\gamma_1,\!\gamma_2\!)\!\!=\!\! (0.0026,\!0.05\!)\!$ at $\!\textcircled{3}$]{
    \includegraphics[width=0.28\textwidth]{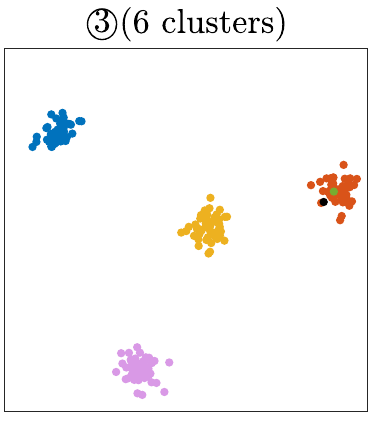}
}
\hspace{0.02\textwidth}
\subfigure[PCA visualization of lrCC with $(\!\gamma_1,\!\gamma_2\!)\!\!=\!\! (0.001,\!0.8)$ at $\textcircled{4}$]{
    \includegraphics[width=0.28\textwidth]{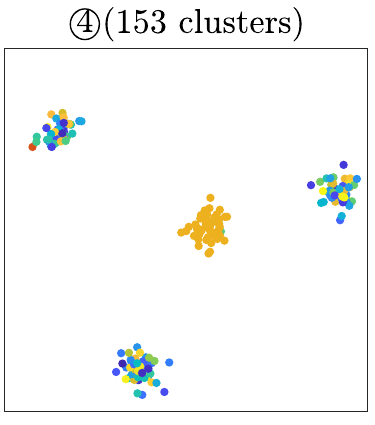}
}
\vspace{-0.3cm}
\caption{Visual explanation of Theorem~\ref{thm:exact_recovery} on a synthetic data set.}
\label{fig:verify1}
\end{figure}

We apply lrCC to learn the cluster memberships of the observations. The edge set is defined as $\mathcal{E} =\cup_{i=1}^n \{l(i,j)\mid A_i \mbox{ is among } A_j\mbox{'s 50-nearest neighbors, } 1\leq i < j \leq n\}$, and the weights $\{w_{ij}\mid l(i,j)\in\mathcal{E}\}$ are determined using a Gaussian kernel as $w_{ij} = \exp(-0.5\|A_i-A_j\|_F^2)$. With this setup, we can verify that condition \eqref{cond-a} and $ A^{(\alpha)} \neq A^{(\beta)}$ for $\alpha \neq \beta$ in condition \eqref{eq:gamma12} of Theorem~\ref{thm:exact_recovery}  are satisfied. In order to ensure the application of Theorem~\ref{thm:exact_recovery}, appropriate choices of $\gamma_1$ and $\gamma_2$ are required. Through computation, we obtain that $\gamma_{1,\min}=0.0078$,  $w_{\max}=4.05$, and $\Delta=2.57$.
To illustrate the requirements for $\gamma_1$ and $\gamma_2$, in Figure~\ref{fig:verify1}(a), we plot the lines corresponding to condition \eqref{eq:gamma1min}:  $\gamma_1 \geq \gamma_{1,\min}$, and the second part of condition \eqref{eq:gamma12}: $\gamma_1 w_{\max} + \gamma_2 \sqrt{d_2} < \Delta $.  For clarity, we slightly shift the red dotted line representing $\gamma_1 = \gamma_{1,\min}$ to the right, since $\gamma_{1,\min}$ is very close to zero. By Theorem~\ref{thm:exact_recovery}, values of $(\gamma_1,\gamma_2)$ that lie within the gray triangular region in Figure~\ref{fig:verify1}(a) theoretically guarantee exact cluster recovery by lrCC. To numerically verify this, we select the point marked as ``$\textcircled{1}$'' in the $(\gamma_1, \gamma_2)$ space within this triangular region. As expected, the resulting clustering result by lrCC, displayed in Figure~\ref{fig:verify1}(c), matches the true labels in Figure~\ref{fig:verify1}(b) perfectly.

We are also interested in examining the remaining three regions divided by the two lines $\gamma_1=\gamma_{1,\min}$ and $\gamma_1 w_{\max} +\gamma_2 \sqrt{d_2} =\Delta$. First, we select point ``$\textcircled{2}$'' in the yellow region and apply lrCC. In this region, lrCC retains its merging ability since $\gamma_1 \geq\gamma_{1,\min}$, but it may lose the distinguishing ability because $\gamma_1 w_{\max} +\gamma_2 \sqrt{d_2} >\Delta$. The clustering result, depicted in Figure~\ref{fig:verify1}(d), shows that one cluster is correctly identified, while the remaining three clusters are incorrectly merged into a single cluster. This outcome is consistent with our theoretical conclusions:  lrCC maintains its merging ability but loses its distinguishing ability, failing to differentiate between the red, yellow, and purple clusters. Secondly, we examine point ``$\textcircled{3}$'' in the cyan region. In this region, lrCC maintains its distinguishing ability because $\gamma_1 w_{\max} +\gamma_2 \sqrt{d_2} < \Delta$, but it may lose the merging ability as $\gamma_1 < \gamma_{1,\min}$. The clustering result, shown in Figure~\ref{fig:verify1}(e), indicates that the data points are classified into 6 clusters, with the blue, yellow, and purple clusters correctly identified. However, for the red cluster, one point is assigned to the green cluster and another to the black cluster. This result aligns with the expected theory:  lrCC loses its merging ability, indicated by the failure to merge the green and black points into the red cluster. Lastly, we examine point ``$\textcircled{4}$'' in the purple region and plot the result in Figure~\ref{fig:verify1}(f). It shows that lrCC may lose both merging and distinguishing abilities in this region.

\subsection{Asymptotic Cluster Recovery for Infinite Samples}
\label{sec:asy_clu}
Building on the finite sample exact cluster recovery results in Theorem~\ref{thm:exact_recovery}, we now turn our attention to the asymptotic cluster recovery behavior of lrCC \eqref{model-convex}.

We narrow our focus to a mixture of $K$ Gaussians in $\mathbb{R}^{d_1\times d_2}$, with mixture weights $\pi_1,\cdots,\pi_K$, and the corresponding means $M_1,\cdots,M_K$. Denote $d:=d_1d_2$. The covariance matrices for all components are isotropic, given by $\sigma^2 I_{d} $, where $I_d$ is the $d\times d$ identity matrix. We assume that the mean matrices $M_1,\dots,M_K$ are low rank, deterministic, and unknown. Denote $\pi_{\min} := \min_{\alpha \in [K]}\pi_k$. Given a set of i.i.d. samples drawn from this mixture distribution, which are denoted as $A_1,\cdots,A_n \in\mathbb{R}^{d_1\times d_2}$. Each $A_i$ has a latent label $s_i^*\in [K]$, and we view it as a fixed realization sampled from the mixture distribution $\mathbb{P}(s_i^*=k)=\pi_k,\,\,k\in[K]$. Then the data generation process can be written as
\begin{equation}
 A_i = M_{ s_i^*} + E_i,\quad i\in[n], \label{model-lrmm}
\end{equation}
where the entries of the noise matrix $E_i$ are i.i.d. Gaussian variables with mean zero and variance $\sigma^2$. Our goal is to learn the cluster membership vector $s^*=(s_1^*,\dots,s_n^*)$ given the observations  $\{A_i \mid i\in[n]\}$.  Model \eqref{model-lrmm} is referred to as the low rank mixture model in \cite{lyu2022optimal,lyu2023optimal}.

Denote $\mathcal{I}_{\alpha} := \{i \in [n] \mid s_i^*= \alpha\}$. If we apply lrCC with a uniformly weighted fully connected graph, where $w_{ij}=1$ for all $i\neq j$, Theorem~\ref{thm:exact_recovery} provides a sufficient condition for exact cluster recovery:
\begin{align}
\min_{\substack{\alpha,\beta\in [K],\\ \alpha\neq \beta}} \|A^{(\alpha)}-A^{(\beta)}\|_F
> 2\max_{\alpha\in [K]} \left( \frac{\max_{  i,j\in \mathcal{I}_{\alpha}} \|A_i-A_j\|_F}{ |\mathcal{I}_{\alpha}|} \right)\cdot\max_{\alpha \in[K]} \left( n-|\mathcal{I}_{\alpha}|\right), \label{eq:mG_condition}
\end{align}
under which the conditions of Theorem~\ref{thm:exact_recovery} hold, with properly chosen parameters $\gamma_1$ and $\gamma_2$. When the sample size $n = {\rm poly}(\frac{d}{\pi_{\min}})$, we show in Appendix \ref{sec: dis_G} that
\begin{align*}
\max_{  i,j\in \mathcal{I}_{\alpha}} \|A_i-A_j\|_F = O\left(\sigma \sqrt{K} \ {\rm polylog}\left(\frac{d}{\pi_{\min}}\right)\right)
\end{align*}
with high probability. Here, ${\rm poly}(x)$ denotes a polynomial in $x$, and  ${\rm polylog}(x)$ denotes a polynomial in $\log x$.
Consequently, condition \eqref{eq:mG_condition} is aligned in principle with:
\begin{align*}
\min_{\alpha,\beta\in [K], \alpha\neq \beta} \|M_{\alpha}-M_{\beta}\|_F > O\left( \frac{1-\pi_{\min}}{\pi_{\min}} \sigma\sqrt{K} \  {\rm polylog}\left(n\right)\right).
\end{align*}
As the sample size $n$ tends to infinity, the bound above implies that  $M_{\alpha}$'s must be more widely separated, making it increasingly challenging to distinguish between the clusters. To address this challenge, the next theorem focuses on  asymptotic cluster memberships recovery, exploring conditions under which lrCC guarantees correct labeling for data points  within a fixed number of standard deviations from the center means, defined as $\mathcal{I}_{\alpha,t}$ below. This extends the results in \cite{jiang2020recovery}.  
Specifically, for a uniformly weighted fully connected graph, where $w_{ij}=1$ for all $i\neq j$, condition \eqref{cond-a1} in Theorem~\ref{thm:merge} is automatically satisfied. Condition \eqref{cond-b1} in Theorem~\ref{thm:merge} then simplifies to  $\gamma_{1} \geq  \frac{2t\sigma}{(F(t;d)\pi_{\alpha} - \epsilon)n }$, which aligns with the result in \cite[(13)]{jiang2020recovery}.

\begin{theorem}\label{thm:merge}
Suppose $ A_1,\dots,A_n \in \mathbb{R}^{d_1\times d_2}$ are generated from \eqref{model-lrmm}.  We apply the lrCC model \eqref{model-convex} for cluster assignment. Given $t > 0$ and $\alpha \in [K]$. We define
$$
\mathcal{I}_{\alpha,t} := \{i \in [n] \mid \|A_i - M_{\alpha} \|_F \leq t \sigma \},\quad
\eta_{ij}^{(\alpha,t)} :=\frac{1}{w_{ij}} \sum_{m\in [n] \backslash  \mathcal{I}_{\alpha,t}} | w_{im} - w_{jm}|.
$$
Let $F(\cdot ;d)$ be the cumulative distribution function of the chi distribution with $d$ 
degrees of freedom. Let $\epsilon \in (0, F(t,d) \pi_{\alpha})$ be a small number.

\vspace{0.1cm}
\begin{itemize}[left=5pt, labelsep=3pt, itemsep=2pt]
\item[{\rm (i)}]
With probability at least $1 - \exp (-2\epsilon^2 n)$, the observations in $\mathcal{I}_{\alpha,t}$ are assigned to the same cluster if conditions \eqref{cond-a1} and \eqref{cond-b1} hold:

\begin{center}
\begin{minipage}{0.96\textwidth}
\vspace{-0.3cm}
\begin{equation}\leqnomode
\hspace{-7pt} \text{The induced subgraph of } \mathcal{G} \!\text{ on }  \mathcal{I}_{\alpha,t} \!\text{ is a clique,} \! \text{ and }
\! \!\! \!\max_{i,j \in \mathcal{I}_{\alpha,t}}\! \eta_{ij}^{(\alpha,t)}  \!\!\!<\!
\!(\!F(t;\!d)\pi_{\alpha} \!-  \epsilon)n; \tag{a1} \label{cond-a1}
\end{equation}

\vspace{-0.8cm}
\begin{equation}\leqnomode
\gamma_1 \geq \max_{  \substack{i,j\in \mathcal{I}_{\alpha,t},\\ i\neq j} } \frac{2t\sigma}{ w_{ij}\left((F(t;d)\pi_{\alpha} - \epsilon)n -\eta_{ij}^{(\alpha,t)}\right)}. \tag{b1} \label{cond-b1}
\end{equation}
\end{minipage}
\end{center}

\vspace{0.1cm}
\item[{\rm (ii)}]
Given $\beta \in [K]$ such that $\beta\neq \alpha$. Define $\mathcal{I}_{\beta,t}$ analogously to $\mathcal{I}_{\alpha,t}$.
Then with probability at least $
(1 -  \exp (-2\epsilon^2 n) - 2^{ -(F(t;d) \pi_{\alpha} -\epsilon)n } )(1 - \exp (-2\epsilon^2 n) - 2^{ -(F(t;d) \pi_{\beta} -\epsilon)n } )$,
there exist $ i\in \mathcal{I}_{\alpha,t}$ and
$ j\in \mathcal{I}_{\beta,t}$ such that $A_i$ and $A_j$ are assigned to different clusters if

\vspace{-0.7cm}
\begin{center}
\begin{minipage}{0.96\textwidth}
\vspace{0.5cm}
\begin{equation}\leqnomode
\gamma_1 (n-1) \widetilde{w}_{\max} + \gamma_2 \sqrt{ d_2} < 0.5\|M_{\alpha} - M_{\beta}\|_F, \quad \widetilde{w}_{\max} := \max_{l(i,j)\in \mathcal{E}} w_{ij}. \tag{c1} \label{cond-c1}
\end{equation}
\end{minipage}
\end{center}
\vspace{-0.1cm}
Namely, $\mathcal{I}_{\alpha,t}$ and $\mathcal{I}_{\beta,t}$ will not be in the same cluster.
\end{itemize}
\end{theorem}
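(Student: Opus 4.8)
The overall strategy is to reduce both parts to the deterministic, finite-sample mechanism of Theorem~\ref{thm:exact_recovery}, and to supply the two missing probabilistic ingredients by concentration and by the spherical symmetry of the Gaussian noise. Throughout I write $E_i := A_i - M_{s_i^*}$ for the realized noise, so that for a point of true label $\alpha$ one has $E_i = A_i - M_{\alpha}$ and $\|E_i\|_F/\sigma$ is chi-distributed with $d$ degrees of freedom.

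For Part~(i), I would apply Theorem~\ref{thm:exact_recovery}(i) to the partition of $[n]$ in which $\mathcal{I}_{\alpha,t}$ is a single cluster and every remaining index is a singleton. For the singleton clusters the clique requirement and the $\gamma_1$ bound \eqref{eq:gamma1min} hold vacuously, while for the cluster $\mathcal{I}_{\alpha,t}$ the interference quantity $\eta^{(\alpha)}_{ij}$ of Theorem~\ref{thm:exact_recovery} reduces \emph{exactly} to $\eta^{(\alpha,t)}_{ij}$, since each singleton $\{m\}$ with $m\notin\mathcal{I}_{\alpha,t}$ contributes $|w_{im}-w_{jm}|$. The only random quantity that remains is $|\mathcal{I}_{\alpha,t}|$. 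Setting $Z_i := \mathbf{1}[\,s_i^*=\alpha,\ \|A_i-M_\alpha\|_F\le t\sigma\,]$, these are i.i.d.\ Bernoulli with success probability $\pi_\alpha F(t;d)$ and satisfy $\sum_i Z_i \le |\mathcal{I}_{\alpha,t}|$, so Hoeffding's inequality gives $|\mathcal{I}_{\alpha,t}|\ge (F(t;d)\pi_\alpha-\epsilon)n$ with probability at least $1-\exp(-2\epsilon^2 n)$ (this is where $\epsilon\in(0,F(t;d)\pi_\alpha)$ is needed). On this event, the clique hypothesis is \eqref{cond-a1}; the bound $\max_{i,j}\eta^{(\alpha,t)}_{ij} < (F(t;d)\pi_\alpha-\epsilon)n \le |\mathcal{I}_{\alpha,t}|$ supplies the $\eta$-requirement of \eqref{cond-a}; and combining $\|A_i-A_j\|_F \le \|A_i-M_\alpha\|_F+\|M_\alpha-A_j\|_F \le 2t\sigma$ with $|\mathcal{I}_{\alpha,t}|\ge(F(t;d)\pi_\alpha-\epsilon)n$ shows that \eqref{cond-b1} implies \eqref{eq:gamma1min}. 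Theorem~\ref{thm:exact_recovery}(i) then forces a common centroid on $\mathcal{I}_{\alpha,t}$.

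For Part~(ii), the deterministic engine is an a priori bound on centroid displacement. From the optimality condition $0\in X_i^*-A_i+\gamma_1\sum_{l(i,j)\in\mathcal{E}}w_{ij}\,\partial\|X_i^*-X_j^*\|_F+\gamma_2\,\partial\|X_i^*\|_*$, together with the facts that a Frobenius-norm subgradient has Frobenius norm at most $1$ and a nuclear-norm subgradient $UV^{\intercal}+W$ has Frobenius norm at most $\sqrt{d_2}$ (using $d_1\ge d_2$), I obtain $\|X_i^*-A_i\|_F \le \gamma_1(n-1)\widetilde{w}_{\max}+\gamma_2\sqrt{d_2}$ for every $i$. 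It then suffices to exhibit $i\in\mathcal{I}_{\alpha,t}$ and $j\in\mathcal{I}_{\beta,t}$ with $\|A_i-A_j\|_F \ge \|M_\alpha-M_\beta\|_F$: if such a pair shared a centroid, the triangle inequality and \eqref{cond-c1} would give $\|A_i-A_j\|_F \le 2(\gamma_1(n-1)\widetilde{w}_{\max}+\gamma_2\sqrt{d_2}) < \|M_\alpha-M_\beta\|_F$, a contradiction. To locate the pair I expand $\|A_i-A_j\|_F^2 = \|M_\alpha-M_\beta\|_F^2 + 2\langle M_\alpha-M_\beta,\,E_i-E_j\rangle + \|E_i-E_j\|_F^2$, so it is enough that $\langle M_\alpha-M_\beta,E_i\rangle\ge 0$ and $\langle M_\alpha-M_\beta,E_j\rangle\le 0$. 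By spherical symmetry of the isotropic Gaussian noise (preserved under conditioning on $\|E_i\|_F\le t\sigma$ and on the membership count), each true-$\alpha$ point in $\mathcal{I}_{\alpha,t}$ satisfies the first inequality independently with probability at least $1/2$; hence the probability that none of the $\ge(F(t;d)\pi_\alpha-\epsilon)n$ such points does is at most $2^{-(F(t;d)\pi_\alpha-\epsilon)n}$, and symmetrically for $\beta$. Since the true-$\alpha$ and true-$\beta$ samples are disjoint and therefore independent, combining each with its Hoeffding event via $(1-a)(1-b)\ge 1-a-b$ and multiplying yields precisely the stated probability.

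The probabilistic steps are routine once they are set up; the delicate point is the construction in Part~(ii). The creative step is recognizing that one does not need \emph{all} cross-pairs to be far apart (as in Theorem~\ref{thm:exact_recovery}(ii)), but only \emph{one} well-separated pair, and that such a pair is produced by a pure symmetry argument rather than by a tail bound on the noise magnitude. I expect the main obstacle to be the probability bookkeeping: ensuring the symmetry argument is genuinely independent of the conditioning (count and magnitude) events, and correctly decoupling the $\alpha$- and $\beta$-events into the product form. By comparison, Part~(i) is a clean reduction, its only content being the Hoeffding lower bound on $|\mathcal{I}_{\alpha,t}|$ and the diameter estimate $\|A_i-A_j\|_F\le 2t\sigma$.
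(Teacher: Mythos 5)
Your proposal is correct and follows essentially the same route as the paper: part (i) is Proposition~\ref{prop: connection_two_problems} applied to the partition with $\mathcal{I}_{\alpha,t}$ as one block and singletons elsewhere, combined with the Hoeffding lower bound on $|\mathcal{I}_{\alpha,t}|$, and part (ii) is the optimality-condition displacement bound $\|X_i^*-A_i\|_F\leq \gamma_1(n-1)\widetilde{w}_{\max}+\gamma_2\sqrt{d_2}$ plus the Gaussian mirror-symmetry argument producing one pair with $\|A_i-A_j\|_F\geq\|M_\alpha-M_\beta\|_F$, exactly as in the paper (which cites \cite[(16)--(17)]{jiang2020recovery} for the symmetry step). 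The only cosmetic differences are that you invoke Theorem~\ref{thm:exact_recovery}(i) rather than the underlying Proposition~\ref{prop: connection_two_problems}, and you bound the nuclear-norm subgradient directly by $\sqrt{d_2}$ where the paper uses $\|{\rm Prox}_{\gamma_2\|\cdot\|_*}(X)-X\|_F\leq\gamma_2\sqrt{d_2}$; both yield the same constant.
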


\subsection{Proof of Theorems~\ref{thm:exact_recovery} and \ref{thm:merge}}
\label{sec:proof_exact_recovery}
In this subsection, we provide the full proofs of Theorems~\ref{thm:exact_recovery} and \ref{thm:merge}. To proceed, we first need to establish two propositions --- Propositions~\ref{prop: connection_two_problems} and \ref{prop: neq} --- which serve as essential intermediate steps. Specifically, Propositions~\ref{prop: connection_two_problems} and \ref{prop: neq}  characterizes the so called {\it merging ability} and {\it distinguishing ability}, respectively, as mentioned above in Section~\ref{sec:exact_clu}.

Proposition~\ref{prop: connection_two_problems} connects the optimal solution of the lrCC model \eqref{model-convex} to the solution of the following centroid learning optimization problem \eqref{eq: cluster_opt}. Specifically, given a partition $\Gamma=\{\mathcal{J}_1,\cdots,\mathcal{J}_G\}$ of $[n]$, we consider the corresponding centroid learning optimization problem
\begin{equation}
\min_{X^{(1)},\dots,X^{(G)}}
\left\{ \begin{aligned}
&\frac{1}{2}\sum_{\alpha\in [G]} |\mathcal{J}_{\alpha}| \cdot \|X^{(\alpha)}-A^{\Gamma,\alpha}\|_F^2  \\
&+ \gamma_1 \!\!\!\!
\sum_{\alpha,\beta\in[G], \alpha<\beta}
\! \!w^{(\alpha,\beta)} \|X^{(\alpha)}-X^{(\beta)}\|_F +\gamma_2\!\!\sum_{\alpha\in[G]}\! |\mathcal{J}_{\alpha}|\cdot\|X^{(\alpha)}\|_*
\end{aligned}
\right\},
\tag{\mbox{P-$\Gamma$}}
\label{eq: cluster_opt}
\end{equation}
where $A^{\Gamma,\alpha} := \frac{1}{|\mathcal{J}_{\alpha}|}\sum_{i\in \mathcal{J}_{\alpha}} A_i$ and $w^{(\alpha,\beta)} := \sum_{i\in \mathcal{J}_{\alpha},j\in \mathcal{J}_{\beta}} w_{ij}$. Proposition~\ref{prop: connection_two_problems} provides a sufficient condition under which the solution to the lrCC model \eqref{model-convex} can be derived from the solution to  problem \eqref{eq: cluster_opt}. This offers crucial insight into the clustering behavior of the lrCC model \eqref{model-convex} --- the points in $\mathcal{J}_{\alpha}$ will belong to the same cluster. The detailed proof is given in Appendix \ref{sec:proof_of_prop_merging}.

\begin{proposition}\label{prop: connection_two_problems}
Suppose for each $\alpha \in [G]$, the induced subgraph of $\mathcal{G}$ on $\mathcal{J}_{\alpha}$ is a clique, and for each $(i,j)\in \mathcal{J}_{\alpha}$, $\eta_{ij}^{\Gamma,\alpha} := \frac{1}{w_{ij}}\sum_{\beta\in [G]\backslash \{\alpha\}} \Big|\sum_{m\in \mathcal{J}_{\beta}}(w_{im}  - w_{jm}) \Big| < |\mathcal{J}_{\alpha}|$. In addition, the parameter $\gamma_1$ satisfies
\begin{align}
\gamma_1 \geq \max_{\alpha\in [G]}\max_{  \substack{i,j\in \mathcal{J}_{\alpha},\\ i\neq j} } \frac{\|A_i-A_j\|_F}{ w_{ij}\left(|\mathcal{J}_{\alpha}| -\eta_{ij}^{\Gamma,\alpha}\right)}.\label{eq: lb_gamma1_J}
\end{align}
Then the unique optimal solution $(X_1^*,\dots,X_n^*)$ to problem \eqref{model-convex} can be obtained as
\begin{align*}
X_i^* = X^{(\alpha),*},\  i \in \mathcal{J}_{\alpha},\ \alpha\in [G],
\end{align*}
where $(X^{(1),*},\dots,X^{(G),*})$ denotes the unique optimal solution to \eqref{eq: cluster_opt}.
\end{proposition}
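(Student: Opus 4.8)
The plan is to exploit strong convexity. Both \eqref{model-convex} and \eqref{eq: cluster_opt} have strongly convex objectives (thanks to the quadratic fidelity terms), so each admits a unique minimizer; denote by $(X^{(1),*},\dots,X^{(G),*})$ the solution of \eqref{eq: cluster_opt}. I would define the candidate $\widehat X_i := X^{(\alpha),*}$ for $i\in\mathcal{J}_\alpha$ and show it satisfies the subdifferential optimality (KKT) conditions of \eqref{model-convex}; uniqueness then forces $\widehat X = X^*$ and yields the claimed cluster structure. Writing out optimality of \eqref{eq: cluster_opt} gives, for each $\alpha$,
\[
0 = |\mathcal{J}_\alpha|(X^{(\alpha),*}-A^{\Gamma,\alpha}) + \gamma_1\sum_{\beta\neq\alpha}w^{(\alpha,\beta)}Z^{(\alpha,\beta)} + \gamma_2|\mathcal{J}_\alpha|W^{(\alpha)},
\]
with $Z^{(\alpha,\beta)}\in\partial\|X^{(\alpha),*}-X^{(\beta),*}\|_F$ (so $Z^{(\alpha,\beta)}=-Z^{(\beta,\alpha)}$ and $\|Z^{(\alpha,\beta)}\|_F\le 1$) and $W^{(\alpha)}\in\partial\|X^{(\alpha),*}\|_*$.

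For the candidate, I take the nuclear-norm subgradient to be $\widehat W_i=W^{(\alpha)}$ (valid since $\widehat X_i=X^{(\alpha),*}$), and for every between-cluster edge $l(i,j)$ with $i\in\mathcal{J}_\alpha$, $j\in\mathcal{J}_\beta$, $\beta\neq\alpha$, I set the difference-norm subgradient to $\widehat Z_{ij}=Z^{(\alpha,\beta)}$, which is legitimate because $\|\widehat X_i-\widehat X_j\|_F=\|X^{(\alpha),*}-X^{(\beta),*}\|_F$ and only the bound $\|Z^{(\alpha,\beta)}\|_F\le 1$ is ever used. The only freedom left is in the within-cluster edges $i,j\in\mathcal{J}_\alpha$, where $\widehat X_i=\widehat X_j$ and the subgradient of $\|X_i-X_j\|_F$ is the entire Frobenius unit ball. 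Introducing the residual $R_i := (X^{(\alpha),*}-A_i)+\gamma_1\sum_{\beta\neq\alpha}(\sum_{m\in\mathcal{J}_\beta}w_{im})Z^{(\alpha,\beta)}+\gamma_2 W^{(\alpha)}$, the per-node optimality equation at $i$ reduces to $\gamma_1\sum_{j\in\mathcal{J}_\alpha}w_{ij}\widehat Z_{ij} = -R_i$, and summing the \eqref{eq: cluster_opt} optimality over $i\in\mathcal{J}_\alpha$ shows $\sum_{i\in\mathcal{J}_\alpha}R_i=0$.

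This is a balanced antisymmetric flow problem on the clique $\mathcal{J}_\alpha$, which I would solve explicitly by $\widehat Z_{ij}=\frac{R_j-R_i}{\gamma_1 w_{ij}|\mathcal{J}_\alpha|}$; antisymmetry is immediate, and using $\sum_{j\in\mathcal{J}_\alpha}R_j=0$ together with the clique assumption (which guarantees $w_{ij}>0$ for all within-cluster pairs) a direct telescoping verifies each per-node equation. The main obstacle — and the crux of the whole proof — is certifying the constraint $\|\widehat Z_{ij}\|_F\le 1$. Here I compute $R_i-R_j = (A_j-A_i)+\gamma_1\sum_{\beta\neq\alpha}(\sum_{m\in\mathcal{J}_\beta}(w_{im}-w_{jm}))Z^{(\alpha,\beta)}$, bound it via the triangle inequality and $\|Z^{(\alpha,\beta)}\|_F\le 1$ by $\|A_i-A_j\|_F+\gamma_1 w_{ij}\eta_{ij}^{\Gamma,\alpha}$, and observe that $\|\widehat Z_{ij}\|_F\le 1$ is equivalent to $\|A_i-A_j\|_F\le\gamma_1 w_{ij}(|\mathcal{J}_\alpha|-\eta_{ij}^{\Gamma,\alpha})$ — which is precisely the lower bound \eqref{eq: lb_gamma1_J}, well-posed exactly because $\eta_{ij}^{\Gamma,\alpha}<|\mathcal{J}_\alpha|$. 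With all subgradients constructed and bounded, the candidate meets the KKT system of the strongly convex problem \eqref{model-convex}, and uniqueness completes the argument.
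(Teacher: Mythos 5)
Your proof is correct and follows essentially the same route as the paper's: both verify the KKT system of \eqref{model-convex} at the candidate assembled from the optimality conditions of \eqref{eq: cluster_opt}, and your explicit within-cluster subgradient $\widehat Z_{ij}=(R_j-R_i)/(\gamma_1 w_{ij}|\mathcal{J}_\alpha|)$ coincides exactly with the paper's $U_{ij}^{*}$ once the definitions of $R_i$ and $T_i^{(\alpha)}$ are unwound, with the same triangle-inequality bound invoking \eqref{eq: lb_gamma1_J}. The residual/zero-sum packaging makes the per-node verification slightly cleaner to state, but it is a repackaging rather than a different argument.
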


The above proposition tells when all points in $\mathcal{J}_{\alpha}$ will belong to the same set. However, it does not address whether we can  distinguish between sets $\mathcal{J}_{\alpha}$ and $\mathcal{J}_{\beta}$. The following Proposition \ref{prop: neq} resolves this by providing conditions under which the learned centroids $X^{(\alpha),*}$ and $X^{(\beta),*}$ are indeed distinct, whose proof is provided in Appendix \ref{sec:proof_of_distinguish}.

\begin{proposition} \label{prop: neq}
Consider problem \eqref{eq: cluster_opt} along with its optimal solution, denoted as $(X^{(1),*}, \dots, X^{(G),*})$. For $\alpha \neq \beta$, if the penalty parameters $\gamma_1$ and $\gamma_2$ satisfy the inequality
\begin{align*}
     \gamma_1 {\left(w^{(\alpha)} + w^{(\beta)} \right)} + \gamma_2 \sqrt{d_2-{\rm rank}(X^{(\alpha),*})} < \|A^{\Gamma,\alpha}-A^{\Gamma,\beta}\|_F,
\end{align*}
where $w^{(\alpha)} := \frac{1}{|\mathcal{J}_{\alpha}|}\sum_{m\in[G]\backslash \{\alpha\} } w^{(\alpha,m)}$, 
then $X^{(\alpha),*}\neq X^{(\beta),*}$.
\end{proposition}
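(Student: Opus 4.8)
The plan is to argue by contradiction through the first-order optimality (stationarity) conditions of the strongly convex problem \eqref{eq: cluster_opt}, whose minimizer $(X^{(1),*},\dots,X^{(G),*})$ is unique. Suppose, contrary to the claim, that $X^{(\alpha),*}=X^{(\beta),*}=:X$, and let $X=U\Sigma V^{\intercal}$ be a reduced singular value decomposition with $r:={\rm rank}(X)$, so that $U\in\mathbb{R}^{d_1\times r}$ and $V\in\mathbb{R}^{d_2\times r}$ have orthonormal columns. First I would write out the stationarity conditions for the blocks $X^{(\alpha)}$ and $X^{(\beta)}$: there exist Frobenius-norm subgradients $G^{(\alpha,m)}\in\partial\|X^{(\alpha),*}-X^{(m),*}\|_F$ and $G^{(\beta,m)}\in\partial\|X^{(\beta),*}-X^{(m),*}\|_F$ (each of Frobenius norm at most $1$, with the $(\alpha,\beta)$ pair shared up to sign), and nuclear-norm subgradients $S^{(\alpha)},S^{(\beta)}\in\partial\|X\|_*$, such that
\begin{align*}
|\mathcal{J}_{\alpha}|(X-A^{\Gamma,\alpha}) + \gamma_1\sum_{m\neq\alpha}w^{(\alpha,m)}G^{(\alpha,m)} + \gamma_2|\mathcal{J}_{\alpha}|S^{(\alpha)} &= 0,\\
|\mathcal{J}_{\beta}|(X-A^{\Gamma,\beta}) + \gamma_1\sum_{m\neq\beta}w^{(\beta,m)}G^{(\beta,m)} + \gamma_2|\mathcal{J}_{\beta}|S^{(\beta)} &= 0.
\end{align*}

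Next I would divide the first identity by $|\mathcal{J}_{\alpha}|$ and the second by $|\mathcal{J}_{\beta}|$ and subtract, which eliminates the common unknown $X$ and isolates the data gap:
\begin{align*}
A^{\Gamma,\alpha}-A^{\Gamma,\beta}
= \gamma_1\Big(\tfrac{1}{|\mathcal{J}_{\alpha}|}\sum_{m\neq\alpha}w^{(\alpha,m)}G^{(\alpha,m)} - \tfrac{1}{|\mathcal{J}_{\beta}|}\sum_{m\neq\beta}w^{(\beta,m)}G^{(\beta,m)}\Big) + \gamma_2\big(S^{(\alpha)}-S^{(\beta)}\big).
\end{align*}
Taking Frobenius norms, the coupling part is controlled by the triangle inequality together with $\|G^{(\alpha,m)}\|_F\leq1$ and the identity $\sum_{m\neq\alpha}w^{(\alpha,m)}=|\mathcal{J}_{\alpha}|\,w^{(\alpha)}$ (and symmetrically for $\beta$), yielding a bound of $\gamma_1(w^{(\alpha)}+w^{(\beta)})$. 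Hence
\begin{align*}
\|A^{\Gamma,\alpha}-A^{\Gamma,\beta}\|_F \leq \gamma_1\big(w^{(\alpha)}+w^{(\beta)}\big) + \gamma_2\,\|S^{(\alpha)}-S^{(\beta)}\|_F,
\end{align*}
so the entire argument reduces to bounding the nuclear-norm subgradient difference by $\sqrt{d_2-r}$, after which the displayed inequality contradicts the hypothesis and forces $X^{(\alpha),*}\neq X^{(\beta),*}$.

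The decisive step is exactly this last bound, and it is where the nuclear penalty (absent from the vector-valued theory of \cite{sun2021convex}) makes the analysis nontrivial. I would invoke the explicit characterization $\partial\|X\|_* = \{UV^{\intercal}+W : U^{\intercal}W=0,\ WV=0,\ \|W\|_2\leq1\}$. Since $S^{(\alpha)}$ and $S^{(\beta)}$ are subgradients at the \emph{same} matrix $X$, they share the identical leading term $UV^{\intercal}$, which cancels in the difference; thus $S^{(\alpha)}-S^{(\beta)}=W^{(\alpha)}-W^{(\beta)}$ lies entirely in the complementary subspace $\{W : U^{\intercal}W=0,\ WV=0\}$, i.e.\ its column space is orthogonal to $U$ and its row space orthogonal to $V$. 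Such a residual factors as $U_\perp\widetilde{W}V_\perp^{\intercal}$ with $\widetilde{W}\in\mathbb{R}^{(d_1-r)\times(d_2-r)}$, so it has rank at most $d_2-r$ (using $d_1\geq d_2$), and its Frobenius norm is then controlled through this rank together with the spectral-norm bound on the subgradients, giving $\|S^{(\alpha)}-S^{(\beta)}\|_F\leq\sqrt{d_2-r}$.

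The main obstacle I anticipate is precisely the sharpness of this nuclear-subgradient estimate: the cancellation of the common $UV^{\intercal}$ is what confines the residual to the low-dimensional orthogonal block, but turning the rank-$(d_2-r)$ confinement and the spectral control of the two free blocks $W^{(\alpha)},W^{(\beta)}$ into the stated constant $\sqrt{d_2-r}$ (rather than a looser dimension factor) is the delicate point and should be carried out with care, in contrast to the purely algebraic coupling estimate. I would also be careful that the $(\alpha,\beta)$ coupling subgradient is shared with opposite signs across the two stationarity equations, so that $X^{(\alpha),*}=X^{(\beta),*}$ forces only $\|G^{(\alpha,\beta)}\|_F\leq1$ rather than a determined gradient; the remaining pieces—writing the block subdifferential conditions and assembling the coupling bound—are routine matrix analogues of the vector-valued argument.
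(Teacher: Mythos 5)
Your reduction to the stationarity conditions of \eqref{eq: cluster_opt} is sound, and the $\gamma_1$ part of the estimate (triangle inequality, $\|G^{(\alpha,m)}\|_F\le 1$, and $\sum_{m\neq\alpha}w^{(\alpha,m)}=|\mathcal{J}_\alpha|w^{(\alpha)}$) is correct and matches in substance the paper's bound $\|B^{\Gamma,\alpha}\|_F+\|B^{\Gamma,\beta}\|_F\le\gamma_1(w^{(\alpha)}+w^{(\beta)})$. The genuine gap is exactly the step you flag as ``delicate'': the claim $\|S^{(\alpha)}-S^{(\beta)}\|_F\le\sqrt{d_2-r}$ for two nuclear-norm subgradients at the same point $X$ does not follow from the ingredients you list, and is in fact false in that generality. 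After the common term $UV^{\intercal}$ cancels you are left with $W^{(\alpha)}-W^{(\beta)}$ where each $W$ satisfies only $\|W\|_2\le 1$ and lives in the $(d_1-r)\times(d_2-r)$ complementary block; taking $W^{(\alpha)}$ a partial isometry of that block and $W^{(\beta)}=-W^{(\alpha)}$ gives $\|W^{(\alpha)}-W^{(\beta)}\|_F=2\sqrt{d_2-r}$. So the rank-times-spectral-norm argument yields only $2\sqrt{d_2-r}$, and your route proves the proposition with $2\gamma_2\sqrt{d_2-\mathrm{rank}(X^{(\alpha),*})}$ in place of $\gamma_2\sqrt{d_2-\mathrm{rank}(X^{(\alpha),*})}$ --- a strictly weaker sufficient condition than the one stated. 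To recover the stated constant you would need additional structure on the two particular subgradients selected by the optimality conditions, not just membership in $\partial\|X\|_*$.

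For comparison, the paper avoids the subdifferential entirely at this point: it rewrites optimality as the fixed-point relation $X^{(\alpha),*}=\mathrm{Prox}_{\gamma_2\|\cdot\|_*}(A^{\Gamma,\alpha}-B^{\Gamma,\alpha})$, so that $X^{(\alpha),*}=X^{(\beta),*}$ means the two shifted data matrices have the same proximal image, and then invokes Proposition~\ref{prop: nuc2}(b), which bounds the distance between two pre-images of a common prox value by $\gamma_2\sqrt{d_2-r}$. That lemma is proved by expressing both pre-images in a \emph{common} singular basis, so that they can differ only in the $d_2-r$ ``thresholded'' singular values, each confined to $[0,\gamma_2]$ --- this shared-basis step is precisely what removes the factor of $2$ and has no analogue in your subgradient formulation (and, you may note, it is itself the delicate point of the paper's argument, since the singular vectors attached to the thresholded part of a pre-image are not determined by its prox image). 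If you want to keep your subgradient route, you should either accept the extra factor of $2$ or supply an argument pinning down the specific $W^{(\alpha)},W^{(\beta)}$ produced by stationarity; as written, the decisive inequality is asserted rather than proved.
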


With Propositions~\ref{prop: connection_two_problems} and \ref{prop: neq} established, we are now ready to prove Theorems~\ref{thm:exact_recovery} and \ref{thm:merge}, as stated in Sections~\ref{sec:exact_clu} and \ref{sec:asy_clu}, respectively. The key technique involves selecting an appropriate partition of $[n]$, since the formulation of problem \eqref{eq: cluster_opt}, and thus the two propositions, depends on the choice of the partition $\Gamma$.

\begin{proof}[\textbf{Proof of Theorem~\ref{thm:exact_recovery}}]
We take the partition $\Gamma$ in \eqref{eq: cluster_opt} as $\{\mathcal{I}_1,\cdots,\mathcal{I}_K\}$. Theorem~\ref{thm:exact_recovery}(i) follows directly from Proposition~\ref{prop: connection_two_problems}, and Theorem~\ref{thm:exact_recovery}(ii) follows directly from Proposition~\ref{prop: neq}. The proof is completed.
\end{proof}

\begin{proof}[\textbf{Proof of Theorem~\ref{thm:merge}}]
(i) We establish the proof by applying Proposition~\ref{prop: connection_two_problems} under a suitable partition of $[n]$. Specifically, we partition the $n$ observed points into $n- |\mathcal{I}_{\alpha,t}| + 1$ subsets in the following manner: all points in $\mathcal{I}_{\alpha,t}$
form a single subset, while each of the remaining $n- |\mathcal{I}_{\alpha,t}|$ points is placed into its own individual subset. First, we derive a bound on $|\mathcal{I}_{\alpha,t}|$. To do so, for $i\in [n]$, we define the indicator random variable $v_i$ as: $v_i = 1$ if $i\in \mathcal{I}_{\alpha,t}$ and $v_i=0$ otherwise. Then $|\mathcal{I}_{\alpha,t}|$ can be expressed as the sum of $v_1,\dots,v_n$, and thus
\begin{align*}
\mathbb{E}[ |\mathcal{I}_{\alpha,t}| ] = \sum_{i=1}^n \mathbb{E}[v_i] &= \sum_{i=1}^n \mathbb{P}(v_i=1) \geq \sum_{i=1}^n \mathbb{P}(i\in \mathcal{I}_{\alpha,t} \mid s_i^* = \alpha) \mathbb{P}(s_i^* = \alpha) \\
&= \sum_{i=1}^n\mathbb{P}(\|E_i\|_F \leq t\sigma) \pi_{\alpha}=F(t;d)\pi_{\alpha}n,
\end{align*}
where the last equality holds as $\|E_i\|_F/\sigma$ follows the chi distribution with $d$ degrees of freedom. Moreover, by Hoeffding's inequality:
\begin{align}\label{eq:proof-size}
\mathbb{P}(|\mathcal{I}_{\alpha,t}|\geq F(t;d) \pi_{\alpha} n - \epsilon n) \geq
\mathbb{P}(|\mathcal{I}_{\alpha,t}|\geq\mathbb{E}[ |\mathcal{I}_{\alpha,t}| ] - \epsilon n)
\geq
1-\exp(-2\epsilon^2n).
\end{align}
This, together with conditions \eqref{cond-a1} and \eqref{cond-b1}, guarantees that the assumptions in Proposition~\ref{prop: connection_two_problems} hold under the constructed partition with probability at least $1 - \exp (-2\epsilon^2 n)$. As a result, the points in $\mathcal{I}_{\alpha,t}$ will be clustered into the same cluster.

(ii) The second part mainly follows from \cite[Theorem~3]{jiang2020recovery}. The optimality condition of \eqref{model-convex} is
Note that $ \|{\rm Prox}_{\gamma_2 \|\cdot\|_*}(X) - X\|_F \leq \gamma_2 \sqrt{d_2}$ for any $X\in\mathbb{R}^{d_1\times d_2}$; see Proposition~\ref{prop: nuc2} for its proof.
Then we have for each $i\in [n]$, $\|X_i^* - A_i\|_F \leq \gamma_1 \sum_{l(i,j)\in \mathcal{E}} w_{ij} \|U_{ij}^* \|_F + \gamma_2 \sqrt{d_2}\leq \gamma_1(n-1) \widetilde{w}_{\max} + \gamma_2 \sqrt{d_2}.$
By \eqref{eq:proof-size} and \cite[(16)]{jiang2020recovery}, the mirror-image symmetry property of Gaussian distributions gives
\begin{align*}
& \mathbb{P}\left( \min _{i\in \mathcal{I}_{\alpha,t}} \langle A_i - M_{\alpha}, M_{\beta} - M_{\alpha}\rangle >0 \right) \\
\leq &  \mathbb{P}\!\left( \!\min _{i\in \mathcal{I}_{\alpha,t}} \!\langle A_i \!-\! M_{\alpha}, M_{\beta} \!-\! M_{\alpha}\rangle \!>\!0  \middle\vert  |\mathcal{I}_{\alpha,t}|\!\geq \!F(t;\!d) \pi_{\alpha} n \!-\! \epsilon n\!\right) + \mathbb{P}\left( |\mathcal{I}_{\alpha,t}|\!<\! F(t;\!d) \pi_{\alpha} n - \epsilon \right) \\
\leq &  2^{-F(t;d) \pi_{\alpha} n + \epsilon n } + \exp (-2\epsilon^2 n).
\end{align*}
Together with \cite[(17)]{jiang2020recovery}, we can find some $i\in \mathcal{I}_{\alpha,t}$ and $j \in \mathcal{I}_{\beta,t}$, with probability at least $(1-2^{-F(t;d) \pi_{\alpha} n + \epsilon n } - \exp (-2\epsilon^2 n))(1-2^{-F(t;d) \pi_{\beta} n + \epsilon n } - \exp (-2\epsilon^2 n))$, such that
$
\|A_i-A_j\|_F \geq \|M_{\alpha} - M_{\beta}\|_F,
$
which further implies
$
\|X_i^* - X_j^*\|_F \geq \|A_i - A_j\|_F - \|X_i^*-A_i\|_F - \|X_j^* - A_j\|_F \geq \|M_{\alpha} - M_{\beta}\|_F - 2(\gamma_1(n-1) \max_{i,j\in [n]} w_{ij} + \gamma_2 \sqrt{d_2}) > 0.
$
This concludes the proof.
\end{proof}

\section{Bounds on Prediction Error} \label{sec:bound_pre_err}
In this section, we analyze the statistical properties of the proposed lrCC model \eqref{model-convex}. We rigorously derive the finite sample bound for the prediction error, demonstrating how the estimator performs in finite sample settings. Furthermore, we establish the prediction consistency of the estimator, showing that as the sample size grows, the prediction converges to the truth. This analysis underscores both the theoretical soundness and practical reliability of the lrCC model, providing important insights into its behavior in various data regimes.

\subsection{Finite Sample Bound on Prediction Error}
\label{sec: finite_prediction}
For ease of notation, we reformulate the lrCC model \eqref{model-convex} in the vector form as
\begin{align}
\min_{x\in \mathbb{R}^{dn}}\  \frac{1}{2}\|x-a\|^2 + \gamma_1 \sum_{l(i,j)\in \mathcal{E}} w_{ij} \| D^{l(i,j)}x\|+ \gamma_2 \sum_{i=1}^n \|{\cal M}^{i} x \|_*, \label{eq: new_vec_form}
\end{align}
where $d := d_1d_2$; $a := [{\rm vec}(A_1);\cdots; {\rm vec}(A_n)]\in \mathbb{R}^{dn}$; $D^{l(i,j)} := (e_i^{\intercal}-e_j^{\intercal})\otimes I_d\in \mathbb{R}^{d\times dn}$ with $e_i$ being the $i$-th standard basis vector in $\mathbb{R}^n$; ${\cal M}^{i}$ is a linear map from $\mathbb{R}^{dn}$ to $\mathbb{R}^{d_1\times d_2}$ such that ${\cal M}^{i} x = {\rm mat}((e_i^{\intercal}\otimes I_d)x,d_1,d_2)$ for $i\in [n]$. The relationship between \eqref{model-convex} and \eqref{eq: new_vec_form} is captured by the equation $X_i = {\cal M}^{i} x$, $i\in [n]$.

Suppose that $a=x_0+\epsilon$, where $\epsilon\in \mathbb{R}^{dn}$ is a vector of independent sub-Gaussian random variables with mean zero and variance $\sigma^2$. Without loss of generality, we assume $\min_{l(i,j)\in \mathcal{E}} w_{ij}\geq \frac{1}{2}$. In order to provide a finite sample bound for the prediction error of the lrCC estimator in \eqref{model-convex} (see Section \ref{sec:pred_consistency}), we analyze its equivalent form \eqref{eq: new_vec_form} in the following theorem.  The proof is postponed to Section \ref{sec: thm_finite_sample}.

\begin{theorem}\label{thm:main_con}
Let $\hat{x}$ be the solution to \eqref{eq: new_vec_form}. If $\gamma_1 \geq \frac{4\sigma}{\sigma_{\min}(B)}\sqrt{d\log(d|\mathcal{E}|)}$, then
\begin{align*}
\frac{1}{2dn} \|\hat{x}-x_0\|^2 &\leq \sigma^2 \left[ \frac{\kappa_0}{n} +  \sqrt{\frac{\kappa_0\log(dn)}{dn^2}}\right]
 +\frac{\gamma_1}{2dn} \sum_{l(i,j)\in \mathcal{E}} (1+2w_{ij}) \| D^{l(i,j)}x_0\|
 \\
 &+  \gamma_2 \left[ \frac{ \sigma}{n^{1/4}}+ \frac{1}{dn}
\sum_{i=1}^n \|{\cal M}^{i} x_0 \|_* \right]
\end{align*}
holds with probability at least
\begin{align}\label{eq:prob_prediction}
1 - \frac{2}{d|\mathcal{E}|} -  2 \exp \left( -\frac{ \sqrt{d^3 n}}{2 }\right) -\exp\left\{ -\min\left(
c_1 \log(dn),c_2 \sqrt{d\kappa_0\log(dn)}\right)\right\},
\end{align}
where $c_1$, $c_2$ are positive constants. Here $\kappa_0$ denotes the number of connected components and $B\in \mathbb{R}^{n\times |{\cal E}|}$ represents the incidence matrix for the graph ${\mathcal{G}}=([n],\mathcal{E})$.
\end{theorem}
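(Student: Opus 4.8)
The plan is to run a basic-inequality (oracle-type) argument off the minimality of $\hat{x}$ and then split the resulting noise inner product along the null space and the range of the fused-difference operator. Abbreviating the two penalties as $P_1(x):=\sum_{l(i,j)\in\mathcal{E}} w_{ij}\|D^{l(i,j)}x\|$ and $P_2(x):=\sum_{i=1}^n\|{\cal M}^{i}x\|_*$ and writing $u:=\hat{x}-x_0$, evaluating the objective of \eqref{eq: new_vec_form} at $\hat{x}$ and at $x_0$, substituting $a=x_0+\epsilon$, expanding $\frac12\|\hat{x}-a\|^2$, and cancelling $\frac12\|\epsilon\|^2$ gives
\begin{equation*}
\tfrac12\|u\|^2 \;\le\; \langle \epsilon, u\rangle + \gamma_1\bigl(P_1(x_0)-P_1(\hat{x})\bigr) + \gamma_2\bigl(P_2(x_0)-P_2(\hat{x})\bigr).
\end{equation*}
Everything then reduces to controlling the three terms on the right while absorbing a fixed fraction of $\|u\|^2$ back into the left-hand side.

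First I would record that the stacked difference operator $D:=B^{\intercal}\otimes I_d$ has null space $\mathcal{N}$ of dimension $d\kappa_0$ (one $d$-dimensional constant per connected component) and minimum nonzero singular value $\sigma_{\min}(B)$, and decompose $\epsilon=P_{\mathcal{N}}\epsilon+P_{\mathcal{N}^{\perp}}\epsilon$ orthogonally. The component in $\mathcal{N}^{\perp}=\mathrm{range}(D^{\intercal})$ is handled by a block Hölder inequality: writing $P_{\mathcal{N}^{\perp}}\epsilon=D^{\intercal}\nu$ with $\nu$ of minimal norm gives $\langle P_{\mathcal{N}^{\perp}}\epsilon,u\rangle=\langle \nu,Du\rangle\le\bigl(\max_{l(i,j)}\|\nu^{l(i,j)}\|/w_{ij}\bigr)P_1(u)$. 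Each block $\nu^{l(i,j)}$ is a sub-Gaussian vector in $\mathbb{R}^{d}$ whose scale is controlled by $1/\sigma_{\min}(B)$, so a chi-type tail estimate together with a union bound over the $|\mathcal{E}|$ edges shows that, with probability at least $1-\tfrac{2}{d|\mathcal{E}|}$, this multiplier is at most $\tfrac{\gamma_1}{2}$ provided $\gamma_1\ge\tfrac{4\sigma}{\sigma_{\min}(B)}\sqrt{d\log(d|\mathcal{E}|)}$ and $w_{ij}\ge\tfrac12$. Combining $\langle P_{\mathcal{N}^{\perp}}\epsilon,u\rangle\le\tfrac{\gamma_1}{2}P_1(u)$ with the triangle inequalities $P_1(x_0)-P_1(\hat{x})\le P_1(u)$ and $P_1(u)\le P_1(\hat{x})+P_1(x_0)$ (done edgewise to track the weights) collapses the fusion contribution into the stated $\frac{\gamma_1}{2dn}\sum_{l(i,j)\in\mathcal{E}}(1+2w_{ij})\|D^{l(i,j)}x_0\|$ term.

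For the null-space component I would use two complementary bounds. The $\ell_2$ route $\langle P_{\mathcal{N}}\epsilon,u\rangle\le\|P_{\mathcal{N}}\epsilon\|\,\|u\|\le\tfrac14\|u\|^2+\|P_{\mathcal{N}}\epsilon\|^2$ absorbs $\tfrac14\|u\|^2$ into the left-hand side; since $\|P_{\mathcal{N}}\epsilon\|^2$ is a sub-Gaussian quadratic form on a subspace of dimension $d\kappa_0$, a Hanson--Wright concentration bound yields $\|P_{\mathcal{N}}\epsilon\|^2\lesssim\sigma^2 d\kappa_0+\sigma^2\sqrt{d\kappa_0\log(dn)}$, which after dividing by $dn$ produces exactly the $\sigma^2\kappa_0/n$ and $\sigma^2\sqrt{\kappa_0\log(dn)/(dn^2)}$ terms and the deviation event $\exp\{-\min(c_1\log(dn),c_2\sqrt{d\kappa_0\log(dn)})\}$. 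The matrix-specific route applies spectral/nuclear duality on the block-averaged noise $\bar{E}_C$ over each component $C$, namely $\langle P_{\mathcal{N}}\epsilon,u\rangle=\sum_C|C|\langle\bar{E}_C,Y_C\rangle_F\le\bigl(\max_C\|\bar{E}_C\|_2\bigr)P_2(P_{\mathcal{N}}u)$, where $Y_C={\cal M}^{i}(P_{\mathcal{N}}u)$ for $i\in C$; a non-asymptotic spectral-norm deviation bound for the averaged sub-Gaussian matrices (whose failure probability is the $2\exp(-\sqrt{d^3n}/2)$ term), balanced against $\gamma_2(P_2(x_0)-P_2(\hat{x}))$, is what yields the $\gamma_2\sigma/n^{1/4}$ contribution. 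Dropping the nonpositive $-\gamma_2 P_2(\hat{x})$ then leaves the remaining $\frac{\gamma_2}{dn}\sum_{i=1}^n\|{\cal M}^{i}x_0\|_*$ term, and collecting everything and dividing by $dn$ gives the claimed inequality and probability.

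The main obstacle is precisely the noise control for the nuclear-norm penalty. In contrast to the vector-based convex clustering analyses, where only $\ell_2$ and group tail bounds appear, here $P_2$ couples to the sub-Gaussian noise through the spectral norm of block-averaged random matrices, and obtaining the clean, dimension-free $\gamma_2\sigma/n^{1/4}$ rate requires the correct non-asymptotic spectral-norm concentration together with a careful balance between the nuclear-penalty increment and the averaged-noise deviation. Establishing the sharp $\sqrt{d\log(d|\mathcal{E}|)}$ threshold for $\gamma_1$ via the edgewise union bound is the second, more routine, technical point; the rest is bookkeeping with Young's and triangle inequalities.
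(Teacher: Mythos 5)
Your overall skeleton (basic inequality, splitting the noise along the null space and row space of $\mathcal{D}=B^{\intercal}\otimes I_d$, edgewise union bound for the $\gamma_1$ threshold, and the $(1+2w_{ij})$ bookkeeping using $w_{ij}\ge\frac12$) matches the paper, and your treatment of the range-space component $\langle P_{\mathcal{N}^{\perp}}\epsilon,u\rangle$ is essentially the paper's argument via $W^{\dagger}$ and a sub-Gaussian union bound over $d|\mathcal{E}|$ coordinates. The gap is in the null-space component. You propose to apply \emph{two} different upper bounds (a Young/Hanson--Wright route and a spectral--nuclear duality route) to the \emph{same} inner product $\langle P_{\mathcal{N}}\epsilon,u\rangle$ and collect both resulting contributions; that is not a valid step --- a single occurrence of a term can be bounded only once, so as written you obtain either the $\sigma^2[\kappa_0/n+\cdots]$ terms or the $\gamma_2$-related term, not both. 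Moreover, the Young route sacrifices $\frac14\|u\|^2$, which would degrade the leading constant to $\frac{1}{4dn}$ rather than the stated $\frac{1}{2dn}$, and the duality route leaves you with $P_2(P_{\mathcal{N}}u)$, which does not compare cleanly with the penalty difference $P_2(x_0)-P_2(\hat{x})$ appearing in the basic inequality.

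The missing idea is the paper's use of the \emph{exact} first-order stationarity condition in the null-space direction. Since $\mathcal{D}V_2=0$, the fusion penalty is blind to $z=V_2^{\intercal}x$, and optimality in $z$ gives the identity $\hat{z}=z_0+V_2^{\intercal}\epsilon-\gamma_2V_2^{\intercal}\hat{\theta}$ with $\hat{\theta}\in\sum_i(\mathcal{M}^i)^*\partial\|\mathcal{M}^i\hat{x}\|_*$. Substituting this identity (rather than bounding) decomposes $\langle P_{\mathcal{N}}\epsilon,u\rangle$ exactly into $\epsilon^{\intercal}V_2V_2^{\intercal}\epsilon-\gamma_2\,\epsilon^{\intercal}V_2V_2^{\intercal}\hat{\theta}$: the quadratic form is handled by Hanson--Wright on a rank-$d\kappa_0$ projection (giving the $\sigma^2$ terms and the $\exp\{-\min(c_1\log(dn),c_2\sqrt{d\kappa_0\log(dn)})\}$ event), while the linear form is a sub-Gaussian variable whose scale is controlled by $\|\hat{\theta}\|^2=\sum_i\|\mathcal{M}^i\hat{\theta}\|_F^2\le nd_2\le n\sqrt{d}$, a consequence of the nuclear-norm subdifferential characterization; choosing $t=\sigma/n^{1/4}$ then yields exactly the $\gamma_2\sigma/n^{1/4}$ term with failure probability $2\exp(-\sqrt{d^3n}/2)$. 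Your attribution of that probability to a spectral-norm deviation bound for component-averaged noise matrices does not match its actual origin and would not produce the stated exponent. Without the stationarity identity, the simultaneous appearance of all three null-space contributions with the full $\frac{1}{2dn}\|\hat{x}-x_0\|^2$ on the left cannot be derived as you describe.
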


Here are some remarks on the above theorem. Comparing with the finite sample prediction bound analysis for other variants of convex clustering models in the literature \cite{tan2015statistical,wang2018sparse,chakraborty2023biconvex}, we extend existing theoretical frameworks to general edge sets and general weights, providing a more comprehensive and versatile analysis that can accommodate a wider variety of applications. In particular, in contrast to \cite{tan2015statistical,wang2018sparse}, which are limited to fully connected graphs ${\cal E}$, our framework operates under more general conditions by considering arbitrary edge sets. This extension is particularly relevant in sparse graph settings, such as the commonly used convex clustering models with $k$-nearest neighbor graphs. Moreover, unlike  existing works \cite{tan2015statistical,wang2018sparse,chakraborty2023biconvex}, which assume  uniform weights, our proposed estimator is more flexible 
by incorporating general weights. This generalization is critical in applications where the underlying data are inherently non-uniform. 
As a side note, we extend our analysis of the prediction error bound for the lrCC model \eqref{model-convex} to the case where $\epsilon$ is $M$-concentrated, allowing for dependence among the noise. Details are provided in Appendix \ref{sec: Mnoise}.

\subsection{Prediction Consistency}\label{sec:pred_consistency}
Based on the above theorem, we can now analyze the prediction consistency of the lrCC estimator in \eqref{model-convex}. Suppose the data matrices $\{A_i\}_{i=1}^n$ are generated according to model \eqref{model-lrmm}. Then by Theorem \ref{thm:main_con}, when $\gamma_1 \geq \frac{4\sigma}{\sigma_{\min}(B)}\sqrt{d\log(d|\mathcal{E}|)}$, the solution $(X_1^*,\dots,X_n^*)$
of \eqref{model-convex} satisfies
\begin{align}
\begin{array}{l}
\displaystyle
\frac{1}{2dn} \sum_{i=1}^n \|X^*_i-M_{s_i^*}\|_F^2
\leq
\sigma^2 \left[ \frac{\kappa_0}{n} +  \sqrt{\frac{\kappa_0\log(dn)}{dn^2}}\right]   \\
\displaystyle
+\frac{\gamma_1}{2dn} \sum_{l(i,j)\in \mathcal{E}} (1+2w_{ij}) \| M_{s_i^*} - M_{s_j^*} \|_F
+  \gamma_2 \left[ \frac{ \sigma}{n^{1/4}} + \frac{1}{dn}
\sum_{i=1}^n \|M_{s_i^*} \|_* \right],
\end{array}
\label{eq: lrcc_consistency}
\end{align}
with high probability  (see \eqref{eq:prob_prediction}). Next, we show that appropriate penalty parameters $\gamma_1$ and $\gamma_2$ can be chosen such that the prediction error $\frac{1}{2dn} \sum_{i=1}^n \|X^*_i-M_{s_i^*}\|_F^2$ decays to zero as $n,d \to \infty$, provided the following condition holds:
\begin{align}\label{cond:pred_consistency}
    \frac{1}{\sigma_{\min}(B)}\sqrt{ \frac{|{\cal E}|^2 \log(d|\mathcal{E}|)}{dn^2}} = o(1),
\end{align}
along with additional mild bounded assumptions: $\kappa_0$, $\widetilde{w}_{\max} := \max_{l(i,j)\in \mathcal{E}} w_{ij}$, $\tau_{\max} := \max_{\alpha \in [K]} \|M_{\alpha}\|_*$, and $\Delta_{\max} := \max_{\alpha, \beta \in [K]} \|M_{\alpha} - M_{\beta}\|_F$ are all bounded, i.e., $O(1)$. Specifically, given any $c\geq 1$, we can choose
\begin{align}\label{eq:choose_gamma}
\gamma_1 = c \cdot \frac{4\sigma \sqrt{d\log(d|\mathcal{E}|)} }{\sigma_{\min}(B)}  \
\overset{\eqref{cond:pred_consistency}}{=} \
o\left(\frac{dn}{|{\cal E}|}\right),\quad
\gamma_2 = o\left(\min\{{n^{1/4}},d\}\right),
\end{align}
and deduce that the right hand side of \eqref{eq: lrcc_consistency} vanishes as $n,d\to \infty$.

The bounded assumptions on $\kappa_0 $, $ \widetilde{w}$, $ \tau_{\max}$, and $ \Delta_{\max}$ are natural and widely adopted in the literature. In particular, similar assumptions have been made in \cite{tan2015statistical,wang2018sparse,chakraborty2023biconvex}, requiring that the distances between clusters are bounded and that the nuclear norm of each true cluster center is also bounded. Moreover, we need the number of connected components in the graph ${\mathcal{G}}=([n],\mathcal{E})$ and the edge weights to be bounded.

It should be emphasized that our prediction consistency result applies to general graphs ${\mathcal{G}}=([n],\mathcal{E})$ as well as general weights, which generalizes existing results on prediction consistency analysis of variants convex clustering models with fully connected graphs \cite{tan2015statistical,wang2018sparse} and uniform weights \cite{tan2015statistical,wang2018sparse,chakraborty2023biconvex}. Up to our knowledge, we are the first to establish the consistency analysis of low rank (matrix) convex clustering type models with general graphs and general weights. To guarantee that the estimator in \eqref{model-convex} is prediction consistent for model \eqref{model-lrmm},  condition \eqref{cond:pred_consistency} is necessary, which depends on not only $n,d$ but also the choice of graphs. A clear characterization of condition \eqref{cond:pred_consistency} is provided in the following remark on two specific examples of graphs.

\begin{remark}
For a fully connected graph and a $k$-nearest neighbor graph, we can characterize the size of edge set $\mathcal{E}$ and the the minimum nonzero singular value of the incidence matrix $B$, simplifying condition  \eqref{cond:pred_consistency}.
\begin{itemize}[left=5pt, labelsep=3pt, itemsep=2pt]
\item  \textbf{Fully Connected Graph.} Consider ${\mathcal{G}}=([n],\mathcal{E})$, where $\mathcal{E}$ contains all possible edges. We have that $|{\cal E}| = n(n-1)/2$ and $\sigma_{\min}(B) =\sqrt{n}$ (see, e.g., \cite{de2007old}). Then, condition \eqref{cond:pred_consistency} is equivalent to
$
\sqrt{ \frac{n \log(dn(n-1)/2)}{d}}  = o(1)
$,
which is true if $n=o(d/\log (d))$.
This means that  the sample size $n$ must grow slower than  the dimension $d$, up to a logarithmic factor. This requirement is typically true in the high dimensional settings, and therefore the prediction consistency of \eqref{model-convex} with a fully connected graph can be guaranteed with properly selected parameters $\gamma_1 = o(\frac{d}{n})$ and $\gamma_2 = o({n^{1/4}})$. If we take $\gamma_2=0$ and set the edge weights to be uniform, our prediction consistency analysis coincides with the existing result for the standard convex clustering model in \cite{tan2015statistical}, and it also matches the result for the sparse convex clustering model in \cite{wang2018sparse}.

\item  \textbf{$k$-Nearest neighbor Graph.} Consider 
${\mathcal{G}}=([n],\mathcal{E})$, where each node is connected to its $k$-nearest neighbors based on Euclidean distances, with $k$ being a fixed number. Typically, $k$ is much smaller than $n$, and we assume $k=O(1)$ as $n,d\rightarrow \infty$. Then we have that $kn/2\leq |{\cal E}|\leq kn$.
By Lemma \ref{lemma:lb_knear} in Appendix~\ref{sec:prooflemma6}, we have that $\sigma_{\min}(B)\geq \frac{2}{n}\sqrt{\frac{k+1}{3}}$. Then, condition \eqref{cond:pred_consistency} holds provided that
$
\sqrt{ \frac{n^2 \log(dn)}{d}} = o(1)
$. This is true if $n=o(\sqrt{d/\log (d)})$, whereby the prediction consistency of \eqref{model-convex} with a $k$-nearest neighbor graph is ensured with appropriately chosen parameters $\gamma_1 = o(d)$ and $\gamma_2 = o({n^{1/4}})$.
\end{itemize}
\end{remark}

\subsection{Proof of Theorem~\ref{thm:main_con}}
\label{sec: thm_finite_sample}
Let $\mathcal{D} := B^{\intercal}\otimes I_d \in \mathbb{R}^{d|\mathcal{E}|\times dn} $. According to Lemma \ref{lemma: D} in Appendix~\ref{sec:lemma_subgaussian}, $ {\rm rank}(\mathcal{D}) = d(n - \kappa_0) $, where $\kappa_0$  is the number of connected components in ${\mathcal{G}}=([n],\mathcal{E})$. We can have its singular value decomposition $\mathcal{D} = U S V_1^{\intercal}$. Here $S\in \mathbb{R}^{d(n-\kappa_0)\times d(n-\kappa_0)}$ is a diagonal matrix, $U\in \mathbb{R}^{d|\mathcal{E}|\times d(n-\kappa_0)}$ and $V_1\in \mathbb{R}^{dn\times d(n-\kappa_0)}$ satisfy $U^{\intercal}U = I_{d(n-\kappa_0)}$, $V_1^{\intercal}V_1 = I_{d(n-\kappa_0)}$. And there exists $V_2\in \mathbb{R}^{dn\times d\kappa_0}$ such that $V=[V_1,V_2]\in \mathbb{R}^{dn\times dn}$ is an orthogonal matrix. It can be easily seen that $V_1 V_1^{\intercal} + V_2 V_2^{\intercal} = I_{dn}$, $V_2^{\intercal} V_2 = I_{d\kappa_0} $, and $V_1^{\intercal} V_2 =0$.

\begin{proof}[\textbf{Proof of Theorem \ref{thm:main_con}}]
By denoting $y = V_1^{\intercal} x\in \mathbb{R}^{d(n-\kappa_0)}$ and $z = V_2^{\intercal} x \in \mathbb{R}^{d\kappa_0}$, we have $x=V_1 y + V_2 z$. Then problem \eqref{eq: new_vec_form} is equivalent to
\begin{align}
\min_{y\in \mathbb{R}^{d(n-\kappa_0)},z\in \mathbb{R}^{d\kappa_0}}\
&\frac{1}{2dn}\|V_1 y + V_2 z-a\|^2 + \frac{\gamma_1}{dn} \sum_{l(i,j)\in \mathcal{E}} w_{ij} \left\| D^{l(i,j)}(V_1 y + V_2 z)\right \| \nonumber\\
&+ {\frac{\gamma_2}{dn}} \sum_{i=1}^n \left\|{\cal M}^{i} (V_1 y + V_2 z) \right\|_*. \label{eq: re_vec_form0}
\end{align}
Note that $\mathcal{D}$ is the matrix formed by stacking the matrices $\{D^{l(i,j)}\}_{l(i,j)\in {\cal E}}$ vertically, and $\mathcal{D}$ takes the singular value decomposition $\mathcal{D} = U S V_1^{\intercal}$. It can be seen that
\begin{align*}
D^{l(i,j)}(V_1 y + V_2 z)  = W^{l(i,j)} y, \quad l(i,j)\in {\cal E},
\end{align*}
where $W = US\in \mathbb{R}^{d|\mathcal{E}|\times d(n-\kappa_0 )}$ consisting of the matrices $\{W^{l(i,j)}\}_{l(i,j)\in {\cal E}}$ stacked vertically. Then problem \eqref{eq: re_vec_form0} can be written as
\begin{align}
\min_{\substack{z\in\mathbb{R}^{d\kappa_0},\\y\in \mathbb{R}^{d(n\!-\!\kappa_0)}} }
&\frac{1}{2dn}\!\|V_1 y \!+\! V_2 z\!-\!a\|^2 \!\!+\! \frac{\gamma_1}{dn} \!\!\!\sum_{l(i,\!j)\in \mathcal{E}}\!\!\! w_{ij} \!\left\| W^{l(i,\!j)}\! y\right \|
\!+\! {\frac{\gamma_2}{dn}}   \!\sum_{i=1}^n \!\left\|{\cal M}^{i} (\!V_1 y \!+\! V_2 z\!) \right\|_*. \label{eq: re_vec_form}
\end{align}
It can be seen that the pseudo-inverse of $W$ is $W^{\dagger}=S^{-1}U^{\intercal}$, and $W^{\dagger}W = I_{d(n-\kappa_0 )}$. In addition, we have that
$
\sigma_{\max}(W^{\dagger}) = 1 / \sigma_{\min}(W) = 1 / \sigma_{\min}(US) = 1 / \sigma_{\min}(D) = 1 / \sigma_{\min}(B)
$, where the last equality comes from Lemma \ref{lemma: D}(ii).

Denote the optimal solution to \eqref{eq: re_vec_form} as $(\hat{y},\hat{z})$. Due to the equivalence of problems \eqref{eq: new_vec_form} and \eqref{eq: re_vec_form}, we have $\hat{x} = V_1 \hat{y}+V_2\hat{z}$. Let $y_0 = V_1^{\intercal} x_0$ and $z_0 = V_2^{\intercal} x_0$. The optimality of $(\hat{y},\hat{z})$ to \eqref{eq: re_vec_form} indicates that
\begin{align*}
&\frac{1}{2dn}\|V_1 \hat{y} + V_2 \hat{z}-a\|^2 + \frac{\gamma_1}{dn} \sum_{l(i,j)\in \mathcal{E}} w_{ij} \left\| W^{l(i,j)} \hat{y}\right \|
+ {\frac{\gamma_2}{dn}} \sum_{i=1}^n \left\|{\cal M}^{i} (V_1 \hat{y} + V_2 \hat{z}) \right\|_*  \\
\leq & \frac{1}{2dn}\|V_1 y_0 + V_2 z_0-a\|^2 + \frac{\gamma_1}{dn} \!\!\sum_{l(i,j)\in \mathcal{E}} w_{ij} \!\left\| W^{l(i,j)} y_0\right \|
+ {\frac{\gamma_2}{dn}}  \sum_{i=1}^n \left\|{\cal M}^{i} (V_1 y_0 + V_2 z_0) \right\|_*.
\end{align*}
By plugging $a = x_0+\epsilon = V_1 y_0+ V_2 z_0+\epsilon$ into the above inequality, we have
\begin{align}
&\frac{1}{2dn}\!\|V_1 \!(\hat{y}\!-\!y_0) \!\!+ \!V_2 (\hat{z}\!-\!z_0) \|^2\!+\! \frac{\gamma_1}{dn} \!\!\sum_{l(i,\!j)\in \mathcal{E}}\!\! w_{ij} \!\left\| W^{l(i,\!j)} \hat{y}\right \|
\!+\! {\frac{\gamma_2}{dn}}\! \sum_{i=1}^n \!\left\|{\cal M}^{i} (V_1 \hat{y} \!+\! V_2 \hat{z}) \right\|_* \label{eq: hat_bar} \\
&\! \leq \!\!\frac{1}{dn}\langle \!V_1 \!(\hat{y}\!-\!\!y_0) \!\!+ \!V_2 (\hat{z}\!-\!\!z_0),\!\epsilon\!\rangle \!+ \!\frac{\gamma_1}{dn} \!\!\!\sum_{l(i,j)\in \mathcal{E}} \!\!\!\!w_{ij} \!\left\|\! W^{l(i,j)} \!y_0\!\right \|\!\!+\!{\frac{\gamma_2}{dn}} \!\!\sum_{i=1}^n \!\left\|\!{\cal M}^{i}\! (V_1 y_0 \!+\! V_2 z_0\!) \right\|_*.\nonumber
\end{align}

Next, we establish a bound for the term $\frac{1}{dn}\langle V_1 (\hat{y}-y_0) + V_2 (\hat{z}-z_0),\epsilon\rangle$ in \eqref{eq: hat_bar}. As $(\hat{y},\hat{z})$ is optimal to  \eqref{eq: re_vec_form}, there exists $\hat{\theta}\in  \sum_{i=1}^n  ({\cal M}^{i})^*  \partial  \left\| {\cal M}^{i} (V_1 \hat{y} + V_2 \hat{z}) \right\|_* $ such that
\[
0 = \frac{1}{dn} V_2^{\intercal} (V_1 \hat{y} + V_2 \hat{z} - a) + \frac{\gamma_2}{dn}V_2^{\intercal} \hat{\theta},
\]
which implies that $\hat{z} = V_2^{\intercal}a - \gamma_2 V_2^{\intercal}\hat{\theta}  = V_2^{\intercal}(V_1 y_0+ V_2 z_0+\epsilon) - \gamma_2 V_2^{\intercal}\hat{\theta} = z_0 + V_2^{\intercal}\epsilon- \gamma_2 V_2^{\intercal}\hat{\theta}$. Here $({\cal M}^{i})^*$ denotes the adjoint of the linear map ${\cal M}^{i}$. Therefore, we have
\begin{align}
&\frac{1}{dn}\left| \langle V_1 (\hat{y}-y_0) + V_2 (\hat{z}-z_0),\epsilon\rangle\right|   = \frac{1}{dn}\left| \epsilon^{\intercal} V_1 (\hat{y}-y_0) + \epsilon^{\intercal} V_2 (V_2^{\intercal}\epsilon- \gamma_2 V_2^{\intercal}\hat{\theta})\right|\notag\\
&\leq \frac{1}{dn} \left|\epsilon^{\intercal} V_1 (\hat{y}-y_0)\right| + \frac{1}{dn}\left| \epsilon^{\intercal} V_2 V_2^{\intercal}\epsilon \right| + \frac{\gamma_2}{dn} \left| \epsilon^{\intercal} V_2V_2^{\intercal}\hat{\theta}\right|.\label{eq: G}
\end{align}
Next, we establish bounds for the three terms on the right hand side of \eqref{eq: G}.

\textbf{Bound for $\frac{1}{dn} \left|\epsilon^{\intercal} V_1 (\hat{y}-y_0)\right|$:}
By noting the fact that $W^{\dagger}W = I_{d(n-k)}$, we have
\begin{align}
\left|\epsilon^{\intercal} V_1 (\hat{y}-y_0)\right|
= \left|\epsilon^{\intercal} V_1 W^{\dagger}W (\hat{y}-y_0)\right|
&\leq \left\| \epsilon^{\intercal} V_1 W^{\dagger}\right\|_{\infty}\|W (\hat{y}-y_0)\|_1\notag \\
&\leq  \left\| \epsilon^{\intercal} V_1 W^{\dagger}\right\|_{\infty} \sum_{l(i,j)\in {\cal E}} \sqrt{d}  \|W^{l(i,j)}(\hat{y}-y_0)\|. \label{eq: epsilony}
\end{align}

\vspace{-0.3cm}
\noindent Let $z \!=\! (\epsilon^{\intercal} V_1 W^{\dagger})^{\intercal} $ and we aim to bound its infinity norm. Its $i$-th entry is $z_i \!=\! \epsilon^{\intercal} V_1 W^{\dagger} e_i$, where $e_i$ is the $i$-th standard basis vector in $\mathbb{R}^{d|\mathcal{E}|}$.
For $i\in [d|\mathcal{E}|]$, we have
\begin{align*}
&\mathbb{E}[z_i]  = \mathbb{E}[\epsilon^{\intercal} V_1 W^{\dagger} e_i] = (\mathbb{E}[\epsilon])^{\intercal} V_1 W^{\dagger} e_i = 0,\\
&\mathbb{E}[z_i^2] = \mathbb{E}[e_i^{\intercal} (W^{\dagger})^{\intercal} V_1^{\intercal }\epsilon
\epsilon^{\intercal} V_1  W^{\dagger} e_i]  \!= \!\sigma^2 e_i^{\intercal} (W^{\dagger})^{\intercal}  W^{\dagger} e_i \!\leq\! \sigma^2 \sigma_{\max}( (W^{\dagger})^{\intercal}  W^{\dagger}) \!=\!\frac{\sigma^2}{\sigma_{\min}^2(B)},\\
&{\rm Var}[z_i]  = \mathbb{E}[z_i^2]  - \left(\mathbb{E}[z_i] \right)^2  = \mathbb{E}[z_i^2] \leq \frac{\sigma^2}{\sigma_{\min}^2(B)},
\end{align*}
which means that $z_i$ is a sub-Gaussian random variable with mean zero and variance at most $\frac{\sigma^2}{\sigma_{\min}^2(B)}$. According to Boole's inequality, we have that for any $t>0$,
\begin{align*}
\mathbb{P}(\|z\|_{\infty} \geq t)
\leq \sum_{i\in [d|\mathcal{E}|]} 2\exp\left(-\frac{t^2\sigma_{\min}^2(B)}{2\sigma^2}\right) = 2d|\mathcal{E}|\exp\left(-\frac{t^2\sigma_{\min}^2(B)}{2\sigma^2}\right).
\end{align*}
By \eqref{eq: epsilony} and the above inequality with $t = \frac{2\sigma}{\sigma_{\min}(B)} \sqrt{\log (d|\mathcal{E}|)}$, we  have
\begin{align}
& \mathbb{P}\left(
\frac{1}{dn} \left|\epsilon^{\intercal} V_1 (\hat{y}-y_0)\right| > \frac{2\sigma}{\sigma_{\min}(B)dn} \sqrt{d\log (d|\mathcal{E}|)}\sum_{l(i,j)\in {\cal E}}   \|W^{l(i,j)}(\hat{y}-y_0)\|  \right) \notag \\
& \qquad \leq \mathbb{P}\left(\left\| \epsilon^{\intercal} V_1 W^{\dagger}\right\|_{\infty}   \geq \frac{2\sigma}{\sigma_{\min}(B)} \sqrt{\log (d|\mathcal{E}|)} \right)
\leq \frac{2}{d|\mathcal{E}|}. \label{eq: control_2norm}
\end{align}

\textbf{Bound for $\frac{1}{dn}\left| \epsilon^{\intercal} V_2 V_2^{\intercal}\epsilon \right|$:}
According to Lemma \ref{lemma: D2}, there exist constants $c_1>0$ and $c_2>0$ such that
\begin{align*}
\mathbb{P}\left( \! \frac{1}{dn}\!\left| \epsilon^{\intercal} V_2 V_2^{\intercal}\epsilon \right| \!>\!  \sigma^2 \!\!\left[ \frac{\kappa_0}{n} \!\!+\!\!\sqrt{\frac{\kappa_0\log(dn)}{dn^2}}\right]\!\right)
\!\leq\! \exp\!\left\{\! -\!\min \!\left( c_1 \!\log(dn),c_2 \sqrt{ d\kappa_0\log(dn)}\right)\!\right\}.
\end{align*}

\textbf{Bound for $\frac{\gamma_2}{dn} \left| \epsilon^{\intercal} V_2V_2^{\intercal}\hat{\theta}\right|$:}
From Lemma \ref{lemma: subgaussian}(ii), we can see that
\begin{align}
\mathbb{P}\left(  \frac{1}{dn} \left| \epsilon^{\intercal} V_2V_2^{\intercal}\hat{\theta}\right|  >  t \right) \leq 2 \exp \left( -\frac{t^2 d^2 n^2 }{2\sigma^2\|V_2V_2^{\intercal}\hat{\theta}\|^2}\right) \leq 2 \exp \left( -\frac{t^2 d^2 n^2 }{2\sigma^2\|\hat{\theta}\|^2}\right),\label{eq: ep_theta_bound}
\end{align}
where the fact that $V_2V_2^{\intercal}$ is a projection matrix is used.
By the definition of the linear maps ${\cal M}^{i}$ and that  $\hat{\theta}\in  \sum_{i=1}^n  ({\cal M}^{i})^*  \partial  \left\| {\cal M}^{i} (V_1 \hat{y} + V_2 \hat{z}) \right\|_* $, 
we have that
\begin{align*}
{\cal M}^{i}\hat{\theta} \in \partial  \left\| {\cal M}^{i} (V_1 \hat{y} + V_2 \hat{z}) \right\|_*,\quad i \in [n].
\end{align*}
By the formulae for $\partial \|\cdot\|_*$, see, e.g., \cite{watson1992characterization}, we have
\begin{align}
\|\hat{\theta}\|^2  = \sum_{i=1}^n \|\hat{\theta}_{(i-1)d+1:id}\|^2= \sum_{i=1}^n \|{\cal M}^{i}\hat{\theta}\|_F^2\leq \sum_{i=1}^n  d_2
\leq n\sqrt{d}.\label{eq: 2normtheta}
\end{align}
By taking $t = \sigma/n^{1/4}$  
in \eqref{eq: ep_theta_bound}, we have
\begin{align*}
\mathbb{P}\left(  \frac{1}{dn} \left| \epsilon^{\intercal} V_2V_2^{\intercal}\hat{\theta}\right|
>
\frac{\sigma}{ n^{1/4} }\right)
\leq
2 \exp \left( -\frac{ d^2 n^{3/2} }{2\|\hat{\theta}\|^2}\right)
= 2 \exp \left( -\frac{ \sqrt{d^3 n}}{2 }\right).
\end{align*}
Therefore, when $\gamma_1 \geq \frac{4\sigma}{\sigma_{\min}(B)}\sqrt{d\log(d|\mathcal{E}|)}$, the inequality \eqref{eq: G} indicates that
\begin{align}
& \frac{1}{dn}\left| \langle V_1 (\hat{y}-y_0) + V_2 (\hat{z}-z_0),\epsilon\rangle\right|    \leq  \frac{\gamma_1}{2dn} \sum_{l(i,j)\in \mathcal{E}}  \left\| W^{l(i,j)} (\hat{y}-y_0)\right\| \nonumber\\
&+ \sigma^2 \left[ \frac{\kappa_0}{n} +\sqrt{\frac{\kappa_0\log(dn)}{dn^2}}\right] + \frac{\gamma_2 \sigma}{n^{1/4}} ,\label{eq: boundVepsilon}
\end{align}
with probability at least 
\begin{align*}
p_0:=1 - \frac{2}{d|\mathcal{E}|} -  2 \exp \left( -\frac{ \sqrt{d^3 n}}{2 }\right) -\exp\left\{ -\min\left(
c_1 \log(dn),c_2 \sqrt{d\kappa_0\log(dn)}\right)\right\}.
\end{align*}
Finally, by plugging the inequality \eqref{eq: boundVepsilon} to the inequality \eqref{eq: hat_bar}, we have
\begin{align*}
&\frac{1}{2dn} \|\hat{x}-x_0\|^2  \leq \frac{\gamma_1}{2dn} \sum_{l(i,j)\in \mathcal{E}}  \left\| W^{l(i,j)} (\hat{y}-y_0)\right\|
+ \sigma^2 \left[ \frac{\kappa_0}{n} +\sqrt{\frac{\kappa_0\log(dn)}{dn^2}}\right] + \frac{\gamma_2 \sigma}{n^{1/4}} \\
&+ \frac{\gamma_1}{dn} \!\!\sum_{l(i,j)\in \mathcal{E}}\!\! w_{ij}\! \left\| W^{l(i,j)} y_0\right \|  + {\frac{\gamma_2}{dn}} \sum_{i=1}^n \left\|{\cal M}^{i} (V_1 y_0 + V_2 z_0) \right\|_* - \frac{\gamma_1}{dn} \!\!\sum_{l(i,j)\in \mathcal{E}}\!\! w_{ij}\! \left\| W^{l(i,j)} \hat{y}\right \|,
\end{align*}
with probability at least $p_0$. Moreover, it can be seen that
\begin{align*}
& \frac{\gamma_1}{2dn} \sum_{l(i,j)\in \mathcal{E}}  \left\| W^{l(i,j)} (\hat{y}-y_0)\right\|  + \frac{\gamma_1}{dn} \sum_{l(i,j)\in \mathcal{E}} w_{ij} \left\| W^{l(i,j)} y_0\right \| - \frac{\gamma_1}{dn} \sum_{l(i,j)\in \mathcal{E}} w_{ij} \left\| W^{l(i,j)} \hat{y}\right \|\\
&\leq \ \frac{\gamma_1}{2dn}
\sum_{l(i,j)\in \mathcal{E}}\left(
(1+2 w_{ij}) \left\| W^{l(i,j)} y_0 \right\|  +(1- 2 w_{ij}) \left\| W^{l(i,j)} \hat{y}\right\| \right)\\
&\leq \ \frac{\gamma_1}{2dn}
\sum_{l(i,j)\in \mathcal{E}}
(1+2 w_{ij}) \left\| W^{l(i,j)} y_0\right\|
\leq \ \frac{\gamma_1}{2dn}
\sum_{l(i,j)\in \mathcal{E}}
(1+2 w_{ij}) \left\| D^{l(i,j)} x_0\right\|,
\end{align*}
where the assumption  $\min_{l(i,j)\in \mathcal{E}} w_{ij}\geq \frac{1}{2}$ is used in the second inequality. This completes the proof.
\end{proof}

\section{Optimization Method for Solving \eqref{model-convex}}\label{sec:opt_method}
In this section, we propose a highly efficient algorithm for solving the lrCC model \eqref{model-convex}. Specifically, we design a double-loop algorithm  that employs 
a superlinearly convergent augmented Lagrangian method (ALM) for solving 
\eqref{model-convex}, with subproblems addressed by
a quadratically convergent semismooth Newton method. Our approach leverages the second-order sparsity and low rank structure of the solution to markedly enhance computational efficiency.

\subsection{Outer Loop: An Augmented Lagrangian Method for \eqref{model-convex}}
To simplify notation in the design of algorithms, we express \eqref{model-convex} in vector form. We denote
\begin{align*}
g(y) := \gamma_1 \sum_{l(i,j)\in \mathcal{E}} w_{ij} \| \mathcal{P}_{l(i,j)} y\|,\quad y\in \mathbb{R}^{d|\mathcal{E}|}, \quad\quad
h(x) := \gamma_2 \sum_{i=1}^n \|{\cal M}^{i} x \|_*,\quad x\in \mathbb{R}^{dn},
\end{align*}
where for each $l(i,j)\in \mathcal{E}$, the linear map $\mathcal{P}_{l(i,j)}: \mathbb{R}^{d|\mathcal{E}|}\rightarrow \mathbb{R}^d$ is defined as $\mathcal{P}_{l(i,j)}y=(\zeta_{l(i,j)}^{\intercal} \otimes I_d )y$ with $\zeta_{l(i,j)}$ being the $l(i,j)$-th standard basis vector in $\mathbb{R}^{|\mathcal{E}|}$,
and the linear map ${\cal M}^{i}$ is defined as in \eqref{eq: new_vec_form}. With $\mathcal{D}$ specified in Section \ref{sec: thm_finite_sample}, the lrCC model \eqref{model-convex} can then be written into the vector form
\begin{equation}
\label{Eq: Convex-composite-program}
\min_{x\in \mathbb{R}^{dn}} \ \frac{1}{2}\|x-a\|^2 + g(\mathcal{D} x) + h(x).
\end{equation}
The term $g(\cdot)$ imposes {\it group lasso regularization} \cite{yuan2006model} on the pairwise differences of the centroids, aiming to limit the number of unique centroids. And the term $h(\cdot)$, which we refer to as the {\it segmented nuclear norm regularization}, enforces the low-rankness of the centroids.

To facilitate our algorithmic design, we express the problem \eqref{Eq: Convex-composite-program} in the following constrained form:
\begin{equation}
\min_{x,z\in \mathbb{R}^{dn},\ y\in \mathbb{R}^{d|\mathcal{E}|}} \Big\{\frac{1}{2}\|x-a\|^2 + g(y) + h(z) \ \Big| \ \mathcal{D} x - y = 0,\ x - z = 0 \Big\}.\tag{P}\label{primal_problem}
\end{equation}
Then the Lagrangian function of \eqref{primal_problem} takes the form of
\begin{align*}
l(x, y, z; v, w) = \frac{1}{2}\|x-a\|^2 + g(y) + h(z) + \langle v, \mathcal{D}x - y \rangle + \langle w, x - z\rangle,
\end{align*}
for $(x,y,z,v,w)\in \mathbb{R}^{dn}\times \mathbb{R}^{d|\mathcal{E}|} \times \mathbb{R}^{dn}\times \mathbb{R}^{d|\mathcal{E}|} \times \mathbb{R}^{dn}$. By finding $\inf_{x,y,z} l(x, y, z; v, w)$, we can derive the dual problem of \eqref{primal_problem} as follows
\begin{equation}
\max_{v\in \mathbb{R}^{d|\mathcal{E}|},\ w\in \mathbb{R}^{dn}} \ \Big\{-\frac{1}{2}\|\mathcal{D}^* v + w - a\|^2 - g^{*}(v) - h^{*}(w) + \frac{1}{2}\|a\|^2 \Big\}, \tag{D}\label{dual_problem}
\end{equation}
where $g^*(\cdot)$ and $h^*(\cdot)$ are the Fenchel conjugate functions of $g(\cdot)$ and $h(\cdot)$, respectively. The Karush-Kuhn-Tucker (KKT) conditions for \eqref{primal_problem} and \eqref{dual_problem} are as follows:
\begin{align}
\begin{array}{l}
x-a + {\cal D}^*v + w = 0, \
y - {\rm Prox}_{g}(v + y) = 0, \
z - {\rm Prox}_{h}(w + z) = 0, \\
{\cal D} x - y = 0, \
x - z = 0.
\end{array}
\label{eq: kkt}
\end{align}

We are now prepared to develop the ALM for solving \eqref{primal_problem}, which will also  yield a solution to the dual problem \eqref{dual_problem} as a byproduct. For a given parameter $\sigma > 0$, the augmented Lagrangian function associated with \eqref{primal_problem} is given by
\begin{align*}
&{\cal L}_{\sigma}(x, y, z; v, w) = l(x, y, z; v, w) + \frac{\sigma}{2}\|{\cal D}x - y\|^2 + \frac{\sigma}{2} \|x - z\|^2\\
=&\frac{1}{2}\|x\!-\!a\|^2 + g(y) + h(z) + \frac{\sigma}{2}\left\|{\cal D}x \!-\! y\!+\! \frac{1}{\sigma} v\right\|^2\!\! + \frac{\sigma}{2} \left\|x \!-\! z \!+ \!\frac{1}{\sigma} w\right\|^2\!\! - \frac{1}{2\sigma}\|v\|^2 \!- \frac{1}{2\sigma}\|w\|^2.
\end{align*}
We describe the ALM for solving \eqref{primal_problem} in Algorithm \ref{alg:alm}. And we provide its global convergence  in  Theorem \ref{thm: global_alm}.

\begin{algorithm}[H]
	\caption{Augmented Lagrangian method for \eqref{primal_problem}}
	\label{alg:alm}
	\begin{algorithmic}
		\STATE {\bfseries Initialization:} Choose $ (v^0, w^0) \in  \mathbb{R}^{d  |\mathcal{E}|} \times \mathbb{R}^{d  n}$, and $\sigma_0 > 0$. Choose $\varepsilon_k\geq 0,\ \sum_{k=0}^{\infty} \varepsilon_k <\infty$. Set $k=0$.
		\REPEAT
		\STATE {\bfseries Step 1}. Solve the ALM subproblem
		\begin{equation}\label{eq: primal_update}
		(x^{k+1}, y^{k+1}, z^{k+1}) \approx \underset{x,z\in \mathbb{R}^{d n},\ y\in \mathbb{R}^{d  |\mathcal{E}|} }{\arg\min} \ \Big\{\Phi_{k}(x, y, z) := \mathcal{L}_{\sigma_k}(x, y, z; v^k, w^k)\Big\},
		\end{equation}
        such that
        \begin{align}
    \Phi_k(x^{k+1},y^{k+1},z^{k+1})-\inf\Phi_{k}\leq \varepsilon_k^2/(2\sigma_k).
    \label{eq: stopA}
        \end{align}
        \vspace{-0.5cm}
		\STATE {\bfseries Step 2}. Update the multipliers
        \begin{align*}
        v^{k+1} = v^k + \sigma_k({\cal D}x^{k+1} - y^{k+1}),\quad
        w^{k+1} = w^k + \sigma_k(x^{k+1} - z^{k+1}).
        \end{align*}
        \vspace{-0.5cm}
		\STATE {\bfseries Step 3}. Update $\sigma_{k+1} \uparrow \sigma_{\infty} \leq \infty$.
		\UNTIL{
        A prespecified stopping criterion
        is satisfied.}
	\end{algorithmic}
\end{algorithm}

\begin{theorem}\label{thm: global_alm}
Let $\{(x^k,y^k,z^k,v^k,w^k)\}$ be the infinite sequence generated by Algorithm \ref{alg:alm}. Then, the sequence $\{(x^k,y^k,z^k)\}$ is bounded and converges to the unique optimal solution $(x^*,y^*,z^*)$ of \eqref{primal_problem}. In addition, $\{(v^k,w^k)\}$ is also bounded and converges to an optimal solution of \eqref{dual_problem}, denoted as $(v^*,w^*)$.
\end{theorem}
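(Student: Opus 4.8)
The plan is to recognize Algorithm~\ref{alg:alm} as an inexact proximal point algorithm (PPA) applied to the dual problem \eqref{dual_problem}, and then to invoke the classical convergence theory of the proximal point algorithm for maximal monotone operators (Rockafellar). Concretely, let $f$ denote the negative of the dual objective appearing in \eqref{dual_problem}, which is a closed proper convex function, so that $T := \partial f$ is maximal monotone and the dual optimal solutions are exactly the zeros of $T$. The standard Lagrangian-duality equivalence shows that the \emph{exact} multiplier update $(v^{k+1},w^{k+1}) = (I + \sigma_k T)^{-1}(v^k,w^k)$ is precisely the resolvent step of the PPA, with the primal subproblem \eqref{eq: primal_update} furnishing the minimizer that defines this resolvent. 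Global convergence of $\{(v^k,w^k)\}$ to a zero of $T$ then reduces to verifying two ingredients: (a) $T^{-1}(0)\neq\emptyset$, and (b) the inexactness tolerated by \eqref{eq: stopA} fits Rockafellar's summable accuracy criterion.

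For ingredient (a), I would first note that the primal objective $\tfrac12\|x-a\|^2 + g(y) + h(z)$ is strongly convex in $x$, and since the constraints $y=\mathcal{D}x$, $z=x$ pin $y,z$ to $x$, eliminating them recovers the strongly convex program \eqref{Eq: Convex-composite-program}; hence \eqref{primal_problem} has a \emph{unique} optimal solution $(x^*,y^*,z^*)$. Because $g$ and $h$ are finite-valued (their domains are the whole space) while the constraints are linear, the constraint qualification holds trivially and strong duality is in force. Consequently the dual optimum is attained and the KKT system \eqref{eq: kkt} is solvable, so $T^{-1}(0)\neq\emptyset$.

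Ingredient (b) is the technical crux. I would show that the subproblem objective $\Phi_k$ is strongly convex in $(x,y,z)$: its quadratic part, evaluated along any direction $(u,p,q)$, equals $\|u\|^2 + \sigma_k\|\mathcal{D}u - p\|^2 + \sigma_k\|u - q\|^2$, which vanishes only at $(u,p,q)=0$, so there is a modulus $\mu_k>0$. Writing $(\bar{x}^{k+1},\bar{y}^{k+1},\bar{z}^{k+1})$ for the exact minimizer of $\Phi_k$, the stopping rule \eqref{eq: stopA} then yields $\tfrac{\mu_k}{2}\|(x^{k+1},y^{k+1},z^{k+1}) - (\bar{x}^{k+1},\bar{y}^{k+1},\bar{z}^{k+1})\|^2 \le \Phi_k(x^{k+1},y^{k+1},z^{k+1}) - \inf\Phi_k \le \varepsilon_k^2/(2\sigma_k)$. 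Propagating this distance bound through the multiplier update in Step~2 produces an estimate of the form $\|(v^{k+1},w^{k+1}) - (I+\sigma_k T)^{-1}(v^k,w^k)\| \le \varepsilon_k$, i.e. exactly Rockafellar's criterion~(A). Since $\sum_k \varepsilon_k < \infty$, the convergence theorem for the inexact PPA gives $(v^k,w^k)\to(v^*,w^*)\in T^{-1}(0)$, an optimal solution of \eqref{dual_problem}.

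Finally, for primal convergence I would combine the uniqueness from (a) with boundedness. The convergence of $\{(v^k,w^k)\}$, the lower bound $\sigma_k\ge\sigma_0>0$, and the strong convexity of $\Phi_k$ keep $\{(x^k,y^k,z^k)\}$ bounded, while summability of $\varepsilon_k$ drives the KKT residual of $(x^{k+1},y^{k+1},z^{k+1})$ to zero; hence every cluster point satisfies \eqref{eq: kkt} and therefore equals the unique $(x^*,y^*,z^*)$, so the entire primal sequence converges. The main obstacle is precisely ingredient (b): because $g$ and $h$ are only convex (not strongly convex) in $y$ and $z$, the strong convexity of $\Phi_k$ must be extracted from the coupling of the quadratic penalty with the strongly convex term $\tfrac12\|x-a\|^2$, and it is this estimate that bridges the objective-gap stopping rule \eqref{eq: stopA} to the resolvent-accuracy criterion required by the proximal point theory.
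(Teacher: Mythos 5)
Your overall route is the same as the paper's: both proofs view Algorithm~\ref{alg:alm} as an inexact proximal point iteration on the dual of \eqref{primal_problem}, establish that the dual/KKT solution set is nonempty (via strong convexity and coercivity of the primal plus strong duality), and then invoke \cite[Theorem~4]{rockafellar1976augmented}. The paper simply cites that theorem after checking nonemptiness; you unpack the mechanism. Your ingredient~(a) and the primal half of the argument are sound, and your observation that $\Phi_k$ is jointly strongly convex (the quadratic form $\|u\|^2+\sigma_k\|\mathcal{D}u-p\|^2+\sigma_k\|u-q\|^2$ vanishes only at the origin) is correct.

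The gap is in ingredient~(b), which you yourself identify as the crux. Propagating the strong-convexity distance bound through Step~2 does not yield $\|(v^{k+1},w^{k+1})-(I+\sigma_k\partial\varphi)^{-1}(v^k,w^k)\|\le\varepsilon_k$. What you actually get is $\|(x^{k+1},y^{k+1},z^{k+1})-(\bar{x}^{k+1},\bar{y}^{k+1},\bar{z}^{k+1})\|\le\varepsilon_k/\sqrt{\mu_k\sigma_k}$, and the multiplier update multiplies this by $\sigma_k$ (times a constant involving $\|\mathcal{D}\|$), so the resolvent error is of order $\sqrt{\sigma_k/\mu_k}\,\varepsilon_k$. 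The modulus $\mu_k$ cannot rescue this: testing the direction $(u,\mathcal{D}u,u)$ shows the quadratic form equals $\tfrac12\|u\|^2$ there regardless of $\sigma_k$, so $\mu_k$ is bounded above uniformly in $k$, and the error is $O(\sqrt{\sigma_k})\,\varepsilon_k$. This is not summable when $\sigma_k\uparrow\infty$ --- a regime the algorithm explicitly permits and on which the superlinear-rate results rely --- so Rockafellar's criterion~(A) is not verified by your estimate. The correct bridge is \cite[Proposition~6]{rockafellar1976augmented} (which the paper does use, in the proof of the local rate theorem): $\|(v^{k+1},w^{k+1})-(I+\sigma_k\partial\varphi)^{-1}(v^k,w^k)\|^2\le 2\sigma_k\bigl(\Phi_k(x^{k+1},y^{k+1},z^{k+1})-\inf\Phi_k\bigr)$. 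This inequality needs no strong convexity of $\Phi_k$, and its factor $2\sigma_k$ exactly cancels the $1/(2\sigma_k)$ in \eqref{eq: stopA}, giving the summable bound $\varepsilon_k$; this cancellation is precisely why the stopping rule is normalized by $2\sigma_k$. With that lemma substituted for your propagation step, the argument closes.
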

\begin{proof}
The existence and uniqueness of the optimal solution to \eqref{primal_problem} can be easily seen from the fact that its equivalent problem \eqref{Eq: Convex-composite-program} is coercive and strongly convex. According to \cite[Corollary 31.2.1]{rockafellar1997convex}, we have that the problem \eqref{dual_problem} has a nonempty solution set. Moreover, the optimal value of \eqref{dual_problem} matches the optimal value of \eqref{primal_problem}, which further implies that the solution set to the KKT system \eqref{eq: kkt} is nonempty. Together with \cite[Theorem 4]{rockafellar1976augmented}, we can see that $\{(v^k,w^k)\}$ is bounded and converges to an optimal solution of \eqref{dual_problem}. The coerciveness further indicates that the sequence $\{(x^k,y^k,z^k)\}$ is bounded and converges to the unique optimal solution of \eqref{primal_problem}. This completes the proof.
\end{proof}

In order to obtain the local convergence results of Algorithm \ref{alg:alm}, we need the following stopping criteria for the subproblems of ALM, in addition to \eqref{eq: stopA}:
\begin{align}
&\Phi_k(x^{k+1},y^{k+1},z^{k+1}) - \inf \Phi_k \leq   (\delta_k^2/(2\sigma_k) )
\|(v^{k+1},\!w^{k+1})\!-\!(v^k,\!w^k)\|^2,\label{eq: stopB1}\\
&{\rm dist}(0,\partial \Phi_k(x^{k+1},y^{k+1},z^{k+1}) ) \leq
(\delta'_k/\sigma_k) \|(v^{k+1},w^{k+1}) - (v^k,w^k)\|, \label{eq: stopB2}
\end{align}
where $\delta_k\geq 0$, $\sum_{k=0}^{\infty} \delta_k <\infty$ and $0\leq \delta'_k\rightarrow 0$. The local Q-superlinear convergence result of the dual sequence $\{(v^k,w^k)\}$ generated by Algorithm \ref{alg:alm} is provided in the following theorem. For notational simplicity, we denote the dual objective function $\varphi(v,w):=\frac{1}{2}\|\mathcal{D}^* v + w - a\|^2 + g^{*}(v) + h^{*}(w)$. Then the solution set to the dual problem \eqref{dual_problem} is $(\partial \varphi)^{-1}(0)$,
which is nonempty according to Theorem \ref{thm: global_alm}.

\begin{theorem}\label{thm: local_dual}
Let $\{(x^k,y^k,z^k,v^k,w^k)\}$ be the infinite sequence generated by Algorithm \ref{alg:alm}, where the subproblem \eqref{eq: primal_update} is stopped when both \eqref{eq: stopA} and \eqref{eq: stopB1} are satisfied. Suppose there exist positive constants $\epsilon$ and $\kappa_{\varphi}$ such that for any $(v,w)\in \mathbb{R}^{d|\mathcal{E}|}\times \mathbb{R}^{dn}$ with ${\rm dist}(0,\partial \varphi(v,w))\leq \epsilon$, it holds that
\begin{align}
{\rm dist}((v,w),(\partial \varphi)^{-1}(0)) \leq \kappa_{\varphi}  {\rm dist}(0, \partial \varphi(v,w)). \label{eq: error_bound}
\end{align}
Then for $k$ sufficiently large, we have
\begin{align*}
{\rm dist}((v^{k+1},w^{k+1}),(\partial \varphi)^{-1}(0)) \leq \theta_k {\rm dist}((v^k,w^k),(\partial \varphi)^{-1}(0)) ,
\end{align*}
where $\theta_k = (\kappa_{\varphi} (\kappa_{\varphi}^2+\sigma_k^2)^{-1/2} + 2\delta_k)(1-\delta_k)^{-1}\rightarrow \theta_{\infty}=\kappa_{\varphi}(\kappa_{\varphi}^2+\sigma_{\infty}^2)^{-1/2}<1$ as $k\rightarrow \infty$. This, together with Theorem \ref{thm: global_alm}, means that the sequence $\{(v^k,w^k)\}$ converges to $(v^*,w^*)$ at a Q-linear rate. In particular, when $\sigma_{\infty}=\infty$, we have $\theta_{\infty}=0$, indicating that the convergence is Q-superlinear.
\end{theorem}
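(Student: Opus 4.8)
The plan is to recognize Algorithm~\ref{alg:alm} as an inexact proximal point algorithm (PPA) applied to the maximal monotone operator $\partial\varphi$ of the dual problem \eqref{dual_problem}, and then to invoke the classical convergence theory for the inexact PPA under a local error bound, in the spirit of \cite{rockafellar1976augmented}. First I would show that the multiplier update in Step~2 is an inexact evaluation of the resolvent $\mathcal{P}_k := (I+\sigma_k\partial\varphi)^{-1}$. Letting $(\bar{x},\bar{y},\bar{z})$ denote the exact minimizer of $\Phi_k$, the standard duality between augmented Lagrangian minimization and the proximal step on the dual gives $\mathcal{P}_k(v^k,w^k) = (v^k,w^k)+\sigma_k(\mathcal{D}\bar{x}-\bar{y},\,\bar{x}-\bar{z})$, whereas $(v^{k+1},w^{k+1})$ is defined by the very same formula evaluated at the inexact minimizer $(x^{k+1},y^{k+1},z^{k+1})$. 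Hence the dual iterate is an approximate proximal point of $\varphi$, with the approximation error governed entirely by how accurately the subproblem \eqref{eq: primal_update} is solved.

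The crux is the inequality $\tfrac{1}{2\sigma_k}\|(v^{k+1},w^{k+1})-\mathcal{P}_k(v^k,w^k)\|^2 \le \Phi_k(x^{k+1},y^{k+1},z^{k+1})-\inf\Phi_k$, which converts the computable suboptimality in function value of the ALM subproblem into control of the distance to the exact proximal point. Granting this, the stopping rule \eqref{eq: stopA} immediately yields $\|(v^{k+1},w^{k+1})-\mathcal{P}_k(v^k,w^k)\|\le \varepsilon_k$ with $\sum_k\varepsilon_k<\infty$, which is Rockafellar's criterion~(A); and \eqref{eq: stopB1} yields $\|(v^{k+1},w^{k+1})-\mathcal{P}_k(v^k,w^k)\|\le \delta_k\|(v^{k+1},w^{k+1})-(v^k,w^k)\|$ with $\sum_k\delta_k<\infty$, which is criterion~(B). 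Thus the two ALM stopping criteria are exactly the two Rockafellar criteria needed for the rate analysis.

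With both criteria in hand, I would apply the linear-convergence theorem for the inexact PPA. The error bound \eqref{eq: error_bound} is precisely the statement that $(\partial\varphi)^{-1}$ is Lipschitz continuous at the origin with modulus $\kappa_\varphi$; since $(\partial\varphi)^{-1}(0)$ is nonempty by Theorem~\ref{thm: global_alm} and $(v^k,w^k)\to(v^*,w^*)$, for $k$ sufficiently large the iterates enter the region where \eqref{eq: error_bound} is active and ${\rm dist}(0,\partial\varphi(v^{k+1},w^{k+1}))\le\epsilon$ holds. The theorem then produces ${\rm dist}((v^{k+1},w^{k+1}),(\partial\varphi)^{-1}(0))\le\theta_k\,{\rm dist}((v^k,w^k),(\partial\varphi)^{-1}(0))$ with $\theta_k=(\kappa_\varphi(\kappa_\varphi^2+\sigma_k^2)^{-1/2}+2\delta_k)(1-\delta_k)^{-1}$; the factor $\kappa_\varphi(\kappa_\varphi^2+\sigma_k^2)^{-1/2}$ is the contraction modulus of the exact resolvent under the error bound, and the $\delta_k$-terms absorb the inexactness arising from criterion~(B). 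Since $\sigma_k\uparrow\sigma_\infty$ and $\delta_k\to0$, this gives $\theta_k\to\kappa_\varphi(\kappa_\varphi^2+\sigma_\infty^2)^{-1/2}<1$, hence Q-linear convergence; and when $\sigma_\infty=\infty$ the limit is $0$, yielding Q-superlinear convergence.

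The main obstacle is establishing the key inequality and the resolvent identification cleanly for the present three-block, doubly-constrained formulation \eqref{primal_problem}. The strong convexity that powers the key inequality does not stem from $\Phi_k$ being jointly strongly convex in $(x,y,z)$---it is only strongly convex in $x$---but rather from the strong concavity of the dual proximal subproblem in $(v,w)$ induced by the Moreau envelope of $\varphi$. Making this correspondence precise, namely verifying that partial minimization of $\mathcal{L}_{\sigma_k}$ over $(x,y,z)$ reproduces the Moreau envelope of $\varphi$ at $(v^k,w^k)$ and that the residual map $(\mathcal{D}x-y,\,x-z)$ evaluated at the approximate minimizer stays within the guaranteed distance of $\mathcal{P}_k(v^k,w^k)$, is where the real work lies; once it is in place, the subsequent invocation of the abstract PPA rate theorem is routine.
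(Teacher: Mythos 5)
Your proposal is correct and follows essentially the same route as the paper: the paper likewise identifies the multiplier update as an inexact proximal step on $\partial\varphi$, uses \cite[Proposition 6]{rockafellar1976augmented} for the key inequality $\|(v^{k+1},w^{k+1})-(I+\sigma_k\partial\varphi)^{-1}(v^k,w^k)\|^2\leq 2\sigma_k(\Phi_k(x^{k+1},y^{k+1},z^{k+1})-\inf\Phi_k)$ so that \eqref{eq: stopB1} delivers Rockafellar's criterion (B), converts the error bound \eqref{eq: error_bound} via \cite[Proposition 7]{rockafellar1976augmented} into the Lipschitz-at-zero condition, and then invokes \cite[Theorem 2.1]{luque1984asymptotic} to obtain the stated $\theta_k$. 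The step you flag as ``where the real work lies'' is exactly the content of the cited Rockafellar propositions, so no new argument is needed.
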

\begin{proof}
First, from \cite[Proposition 7]{rockafellar1976augmented} and the condition \eqref{eq: error_bound}, we can see that, for any
$u\in \mathbb{R}^{d|\mathcal{E}|}\times \mathbb{R}^{dn}$ such that $\|u\|\leq \epsilon$, and any $(v,w)\in (\partial \varphi)^{-1}(u)$, we have
\begin{align*}
{\rm dist}((v,w),(\partial \varphi)^{-1}(0)) \leq \kappa_{\varphi}  {\rm dist}(0, \partial \varphi(v,w))\leq \kappa_{\varphi} \|u\|.
\end{align*}
In addition, we can see from \eqref{eq: stopB1} and \cite[Proposition 6]{rockafellar1976augmented} that
\begin{align*}
\|(v^{k+1},w^{k+1})-(I+\sigma_k \partial \varphi)^{-1} (v^{k},w^{k})\|^2
&\leq 2\sigma_k\left(\Phi_k(x^{k+1},y^{k+1},z^{k+1}) - \inf \Phi_k\right)\\
&\leq \delta_k^2 \|(v^{k+1},w^{k+1})-(v^k,w^k)\|^2.
\end{align*}
Then according to \cite[Theorem 2.1]{luque1984asymptotic}, we have the desired conclusion.
\end{proof}

In order to study the convergence rate of the primal sequence $\{(x^k,y^k,z^k)\}$, which is of primary interest, we require one more condition on the $\mathcal{T}_l$, which is defined as $\mathcal{T}_l(x,y,z;v,w):=\{(x',y',z',v',w')\mid (x',y',z',-v',-w')\in \partial l(x,y,z;v,w) \}$. Details are provided in the following theorem.
\begin{theorem}
Assume the assumptions in Theorem \ref{thm: local_dual} hold and the stopping criterion \eqref{eq: stopB2} is  satisfied. Furthermore, assume that $\mathcal{T}_l$ is metrically subregular at $(x^*,y^*,z^*,v^*,w^*)$ for the origin with modulus $\kappa_l$, that is, there exist neighborhoods $U$ of $(x^*,y^*,z^*,v^*,w^*)$ and $V$ of the origin such that
\begin{align*}
{\rm dist}((x,y,z,v,w), \mathcal{T}_l^{-1}(0)) \leq \kappa_l {\rm dist}(0,\mathcal{T}_l(x,y,z,v,w) \cap V),\quad \forall \ (x,y,z,v,w)\in U.
\end{align*}
Then for $k$ sufficiently large, we have
\begin{align*}
\|(x^{k+1},y^{k+1},z^{k+1})-(x^*,y^*,z^*)\| \leq \xi_k {\rm dist}((v^k,w^k),(\partial \varphi)^{-1}(0)) ,
\end{align*}
where $\xi_k = \sqrt{1+(\delta'_k)^2}\kappa_l  \sigma_k^{-1}(1-\delta_k)^{-1} \rightarrow \xi_{\infty}=\kappa_l \sigma_{\infty}^{-1}$ as $k\rightarrow \infty$. That is, the sequence $\{(x^k,y^k,z^k)\}$ converges to $(x^*,y^*,z^*)$ at a R-linear rate provided that $\sigma_{\infty}>\kappa_l$. In particular, when $\sigma_{\infty}=\infty$, we have $\xi_{\infty}=0$,  indicating that the convergence is R-superlinear.
\end{theorem}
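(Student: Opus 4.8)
The plan is to leverage the classical equivalence between the ALM applied to \eqref{primal_problem} and the inexact proximal point algorithm applied to the dual operator $\partial\varphi$, and to convert the Q-linear decay of the dual distance ${\rm dist}((v^k,w^k),(\partial\varphi)^{-1}(0))$ established in Theorem~\ref{thm: local_dual} into a bound on the primal error by means of the assumed metric subregularity of $\mathcal{T}_l$. Concretely, I would first show that each accepted triple $(x^{k+1},y^{k+1},z^{k+1})$ together with the updated multipliers $(v^{k+1},w^{k+1})$ is an approximate zero of $\mathcal{T}_l$, then apply the metric subregularity inequality to control its distance to $\mathcal{T}_l^{-1}(0)$, and finally use primal uniqueness to isolate the primal error while controlling the multiplier increment through the dual estimates.

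The key computational step is to relate $\partial\Phi_k$ at the inexact minimizer to $\mathcal{T}_l$ at the full tuple. Differentiating $\Phi_k=\mathcal{L}_{\sigma_k}(\cdot\,;v^k,w^k)$ and substituting the multiplier updates $v^{k+1}=v^k+\sigma_k(\mathcal{D}x^{k+1}-y^{k+1})$ and $w^{k+1}=w^k+\sigma_k(x^{k+1}-z^{k+1})$ of Step~2 of Algorithm~\ref{alg:alm}, the three primal blocks of $\partial\Phi_k(x^{k+1},y^{k+1},z^{k+1})$ reduce exactly to $(x^{k+1}-a)+\mathcal{D}^*v^{k+1}+w^{k+1}$, $\partial g(y^{k+1})-v^{k+1}$, and $\partial h(z^{k+1})-w^{k+1}$, which are precisely the first three coordinate sets of $\mathcal{T}_l(x^{k+1},y^{k+1},z^{k+1},v^{k+1},w^{k+1})$. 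The remaining two coordinates of $\mathcal{T}_l$ evaluate to $y^{k+1}-\mathcal{D}x^{k+1}=-\frac{1}{\sigma_k}(v^{k+1}-v^k)$ and $z^{k+1}-x^{k+1}=-\frac{1}{\sigma_k}(w^{k+1}-w^k)$. Hence, selecting $\gamma^{k+1}\in\partial\Phi_k(x^{k+1},y^{k+1},z^{k+1})$ of minimal norm, the vector $(\gamma^{k+1},-\frac{1}{\sigma_k}(v^{k+1}-v^k),-\frac{1}{\sigma_k}(w^{k+1}-w^k))$ lies in $\mathcal{T}_l$ at the tuple. The stopping criterion \eqref{eq: stopB2} bounds $\|\gamma^{k+1}\|\le(\delta'_k/\sigma_k)\|(v^{k+1},w^{k+1})-(v^k,w^k)\|$, so that ${\rm dist}(0,\mathcal{T}_l(\cdots))\le\frac{\sqrt{1+(\delta'_k)^2}}{\sigma_k}\|(v^{k+1},w^{k+1})-(v^k,w^k)\|$.

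Next I would invoke the metric subregularity assumption. Since Theorems~\ref{thm: global_alm} and~\ref{thm: local_dual} guarantee $(x^k,y^k,z^k,v^k,w^k)\to(x^*,y^*,z^*,v^*,w^*)$ and that the residual above tends to zero, for all large $k$ the tuple lies in the neighborhood $U$ and the constructed element lies in $V$, so the subregularity inequality applies and bounds ${\rm dist}((x^{k+1},y^{k+1},z^{k+1},v^{k+1},w^{k+1}),\mathcal{T}_l^{-1}(0))$ by $\kappa_l$ times that residual. Because $\mathcal{T}_l^{-1}(0)$ is the KKT set of \eqref{eq: kkt} and, by the strong convexity noted in Theorem~\ref{thm: global_alm}, every KKT point shares the same primal part $(x^*,y^*,z^*)$, the left-hand distance dominates $\|(x^{k+1},y^{k+1},z^{k+1})-(x^*,y^*,z^*)\|$. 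This yields $\|(x^{k+1},y^{k+1},z^{k+1})-(x^*,y^*,z^*)\|\le\kappa_l\frac{\sqrt{1+(\delta'_k)^2}}{\sigma_k}\|(v^{k+1},w^{k+1})-(v^k,w^k)\|$.

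Finally, to bound the multiplier increment by the dual distance I would reuse the argument from the proof of Theorem~\ref{thm: local_dual}: by \eqref{eq: stopB1} and \cite[Proposition~6]{rockafellar1976augmented}, $(v^{k+1},w^{k+1})$ is within $\delta_k\|(v^{k+1},w^{k+1})-(v^k,w^k)\|$ of the exact proximal point $(I+\sigma_k\partial\varphi)^{-1}(v^k,w^k)$; combining this with the firm nonexpansiveness of the resolvent, which gives $\|(I+\sigma_k\partial\varphi)^{-1}(v^k,w^k)-(v^k,w^k)\|\le{\rm dist}((v^k,w^k),(\partial\varphi)^{-1}(0))$, and a triangle inequality produces $(1-\delta_k)\|(v^{k+1},w^{k+1})-(v^k,w^k)\|\le{\rm dist}((v^k,w^k),(\partial\varphi)^{-1}(0))$. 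Substituting gives the claimed bound with $\xi_k=\sqrt{1+(\delta'_k)^2}\,\kappa_l\,\sigma_k^{-1}(1-\delta_k)^{-1}$; since Theorem~\ref{thm: local_dual} makes ${\rm dist}((v^k,w^k),(\partial\varphi)^{-1}(0))$ decay Q-linearly and $\xi_k\to\xi_\infty=\kappa_l\sigma_\infty^{-1}$, the primal error is majorized by a Q-linearly convergent sequence and thus converges R-linearly, with $\sigma_\infty>\kappa_l$ ensuring $\xi_\infty<1$. The main obstacle I anticipate is the exact identification in the second paragraph: verifying that the subdifferential of the augmented Lagrangian at the inexact minimizer, after the multiplier update, is a genuine element of $\mathcal{T}_l$ at the augmented tuple, that this element falls in the localizing neighborhood $V$ so the subregularity bound is admissible, and that the minimal-norm selection $\gamma^{k+1}$ is compatible with the distance-type stopping test \eqref{eq: stopB2}.
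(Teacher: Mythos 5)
Your proposal is correct and follows essentially the same route as the paper's proof: bound ${\rm dist}(0,\mathcal{T}_l)$ at the updated tuple via the stopping criterion \eqref{eq: stopB2} and the multiplier-update identities, apply metric subregularity together with the fact that $\mathcal{T}_l^{-1}(0)$ has a common primal part $(x^*,y^*,z^*)$, and control $\|(v^{k+1},w^{k+1})-(v^k,w^k)\|$ by $(1-\delta_k)^{-1}{\rm dist}((v^k,w^k),(\partial\varphi)^{-1}(0))$ using \eqref{eq: stopB1}. The only difference is that you derive explicitly (via the block computation of $\partial\Phi_k$ and the firm nonexpansiveness of the resolvent) the two inequalities the paper imports from \cite[Equation (4.21)]{rockafellar1976augmented} and \cite[Lemma 3]{cui2019r}, and your derivations are sound.
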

\begin{proof}
As noted in \cite[Equation (2.11)]{rockafellar1976augmented}, $\mathcal{T}_l^{-1}(0)$ consists of all $(x,y,z,v,w)$ satisfying the KKT condition \eqref{eq: kkt}, which, together with the proof of Theorem \ref{thm: global_alm}, indicates that $\mathcal{T}_l^{-1}(0) = \{(x^*,y^*,z^*,v,w) \mid (v,w)\in (\partial \varphi)^{-1}(0)\}$. The global convergence 
of $\{(x^k,y^k,z^k,v^k,w^k)\}$ 
in Theorem \ref{thm: global_alm} implies that for $k$ sufficiently large,
\begin{align*}
&\quad \|(x^{k+1},y^{k+1},z^{k+1})-(x^*,y^*,z^*)\|^2 + {\rm dist}^2((v^{k+1},w^{k+1}),(\partial \varphi)^{-1}(0))  \\
&={\rm dist}^2((x^{k+1},y^{k+1},z^{k+1},v^{k+1},w^{k+1}), \mathcal{T}_l^{-1}(0))\leq \kappa_l^2 {\rm dist}^2(0,\mathcal{T}_l(x,y,z,v,w)),
\end{align*}
where the last inequality follows from the metric subregularity of $\mathcal{T}_l$. According to \cite[Equation (4.21)]{rockafellar1976augmented}, we have that
\begin{align*}
{\rm dist}^2(0,\!\mathcal{T}_l(x,\!y,\!z,\!v,\!w)) \!\leq\!   {\rm dist}^2(0,\!\partial \Phi_k(x^{k+1},\!y^{k+1},\!z^{k+1}) ) \!+\! \sigma_k^{-2}\|(v^{k+1},\!w^{k+1})\!-\!(v^{k},\!w^{k})\|^2.
\end{align*}
As the stopping criterion \eqref{eq: stopB2} is satisfied, we further have that for $k$ sufficiently large,
\begin{align*}
& \ \|(x^{k+1},y^{k+1},z^{k+1})-(x^*,y^*,z^*)\|^2 \leq   \kappa_l^2 {\rm dist}^2(0,\mathcal{T}_l(x,y,z,v,w))\\
\leq & \ \kappa_l^2 {\rm dist}^2(0,\partial \Phi_k(x^{k+1},y^{k+1},z^{k+1}) ) + (\kappa_l^2/\sigma_k^{2})\|(v^{k+1},w^{k+1})-(v^{k},w^{k})\|^2\\
\leq & \ (\kappa_l\delta'_k/\sigma_k)^2 \|(v^{k+1},w^{k+1}) - (v^k,w^k)\|^2+ (\kappa_l^2/\sigma_k^{2})\|(v^{k+1},w^{k+1})-(v^{k},w^{k})\|^2\\
= & \  \left( (\delta_k')^2 + 1 \right) (\kappa_l\sigma_k^{-1})^2 \|(v^{k+1},w^{k+1}) - (v^k,w^k)\|^2.
\end{align*}
Then, from \cite[Lemma 3]{cui2019r} and the stopping criterion \eqref{eq: stopB1}, we have for all $k\geq 0$
\begin{align*}
\|(v^{k+1},w^{k+1}) - (v^k,w^k)\| \leq (1-\delta_k)^{-1}{\rm dist}((v^k,w^k), (\partial \varphi)^{-1}(0)).
\end{align*}
Therefore, by combining the above results, we can see that for $k$ sufficiently large,
\begin{align*}
\|(x^{k+1},\!y^{k+1},\!z^{k+1})\!-\!(x^*,\!y^*,\!z^*)\|\! \leq\! \sqrt{1\!+\!(\delta'_k)^2}\kappa_l  \sigma_k^{-1}(1\!-\!\delta_k)^{-1}  {\rm dist}((v^k,\!w^k),\! (\partial \varphi)^{-1}(0)).
\end{align*}
This completes the proof.
\end{proof}

Note that the main difficulty in implementing Algorithm \ref{alg:alm} lies in solving the ALM subproblem \eqref{eq: primal_update}, which is discussed in detail in the subsequent subsection.

\subsection{Inner Loop: A Semismooth Newton-CG Method for Subproblem \eqref{eq: primal_update}}
In this subsection, we will design a semismooth Newton-CG algorithm to solve the ALM subproblem \eqref{eq: primal_update}. Given $\sigma > 0$ and $ (\tilde{v}, \tilde{w}) \in\mathbb{R}^{d |\mathcal{E}|} \times \mathbb{R}^{d n}$,  we essentially need to solve the following problem
\begin{equation}
\label{eq: Phi-func}
\min_{x,z\in \mathbb{R}^{d n}, \ y\in \mathbb{R}^{d |\mathcal{E}|}} \ \Big\{\Phi(x, y, z) := \mathcal{L}_{\sigma}(x, y, z; \tilde{v}, \tilde{w})\Big\}.
\end{equation}
Since $\Phi(\cdot, \cdot, \cdot)$ is strongly convex, the above minimization problem admits a unique optimal solution, denoted as $(\bar{x}, \bar{y}, \bar{z})$. For any given $x$, we observe that minimizing \eqref{eq: Phi-func} with respect to $y$ and $z$ is attained at
$
{y} = {\rm Prox}_{g/\sigma}\left({\cal D}{x} + \frac{1}{\sigma}\tilde{v}\right)$, and $
{z} = {\rm Prox}_{h/\sigma}\left({x} + \frac{1}{\sigma} \tilde{w}\right)
$.
Substituting these expressions back into \eqref{eq: Phi-func}, the objective function becomes
$\phi:\mathbb{R}^{d n} \rightarrow \mathbb{R} $:
\begin{align}
& \phi(x)
= \frac{1}{2}\|x\!-\!a\|^2  + g\!\left(\!{\rm Prox}_{g/\sigma}\!(\!{\cal D}x \!+ \!\frac{1}{\sigma}\tilde{v})\!\right)\!\!+\!\frac{\sigma}{2}\left\|({\cal D}x \!+\! \frac{1}{\sigma}\tilde{v})\!-\! {\rm Prox}_{g/\sigma}\!(\!{\cal D}x \!+\! \frac{1}{\sigma} \tilde{v})\!\right\|^2\label{eq: phi-func}
\\
& \!\!+ h\!\left(\!{\rm Prox}_{h/\sigma}\!(x \!+\! \frac{1}{\sigma} \tilde{w})\!\right)\!\!+\!\frac{\sigma}{2}\left\|(x \!+\! \frac{1}{\sigma} \tilde{w})\!-\! {\rm Prox}_{h/\sigma}(x \!+\! \frac{1}{\sigma} \tilde{w})\!\right\|^2\!-\! \frac{1}{2\sigma}(\|\tilde{v}\|^2 \!+\! \|\tilde{w}\|^2).\notag
\end{align}
Then, we can compute the solution $(\bar{x}, \bar{y}, \bar{z})$ of the ALM subproblem \eqref{eq: Phi-func}
as
\begin{align}
\bar{x} = \underset{x\in \mathbb{R}^{d n}}{\arg\min}\ \phi(x), \quad \bar{y} = {\rm Prox}_{g/\sigma}\left({\cal D}\bar{x} + \frac{1}{\sigma}\tilde{v}\right), \quad
\bar{z} = {\rm Prox}_{h/\sigma}\left(\bar{x} + \frac{1}{\sigma} \tilde{w}\right). \label{eq: updates_SSN}
\end{align}

We can see that $\phi(\cdot)$ is strongly convex. Moreover, it is continuously differentiable with the gradient
\begin{align*}
\nabla \!\phi(x) \!\!= \!x\!-\!a \!+\! \sigma\!{\cal D}^{*}\!\!\left(\!{\cal D}x \!+\! \frac{1}{\sigma} \tilde{v}\!\right)\!\!-\!\sigma{\cal D}^{*}\!{\rm Prox}_{g\!/\!\sigma}\!\!\!\left(\!\!{\cal D}x \!+\! \frac{1}{\sigma}\tilde{ v}\!\!\right) \!\!+\! \sigma\!\left(\!x \!+\! \frac{1}{\sigma} \tilde{w}\!\right)\!\!-\!\sigma {\rm Prox}_{h\!/\!\sigma}\!\left(\!x \!+\! \frac{1}{\sigma} \tilde{w}\!\right)\!,
\end{align*}
which further implies that $\nabla \phi(\cdot)$ is nondifferentiable but Lipschitz continuous. The strong convexity and continuous differentiability of $\phi(\cdot)$ indicate that $\bar{x}$ can be calculated by solving the following nonlinear and nonsmooth equation
\begin{equation}
\label{eq: newton-system}
\nabla \phi(x) = 0.
\end{equation}
Define the multifunction $\hat{\partial}^2 \phi:\mathbb{R}^{d n}\rightrightarrows \mathbb{R}^{d  n\times dn}$ as
\begin{equation}\label{eq: nabla-phi}
\hat{\partial}^2 \phi (x)  \!:=\!
\left\{ \!I_{dn} \!+\! \sigma{\cal D}^*(I_{d|\mathcal{E}|} \!-\! {\cal W}){\cal D} + \sigma(I_{dn} \! - \! {\cal Q}) \left |
\begin{array}{l}
{\cal W} \in \partial {\rm Prox}_{g/\sigma}({\cal D}x \!+\! \tilde{v}/\sigma) \\
{\cal Q} \in \partial {\rm Prox}_{h/\sigma}(x \!+ \!\tilde{w}/\sigma)
\end{array}\right.\!\!\!\!
\right\},
\end{equation}
where $\partial {\rm Prox}_{g/\sigma}({\cal D}x + \tilde{v}/\sigma)$ and $\partial {\rm Prox}_{h/\sigma}(x + \tilde{w}/\sigma)$ are the generalized Jacobians of the Lipschitz continuous mappings ${\rm Prox}_{g/\sigma}(\cdot)$ and ${\rm Prox}_{h/\sigma}(\cdot)$ at ${\cal D}x + \tilde{v}/\sigma$ and $x + \tilde{w}/\sigma$, respectively.

\begin{algorithm}[!ht]
	\caption{Semismooth Newton-CG method for \eqref{eq: newton-system}}
	\label{alg:ssncg}
	\begin{algorithmic}
		\STATE {\bfseries Initialization:} Given $\bar{x}^{0} \in \mathbb{R}^{d  n}$, $\bar{\mu} \in (0, 1/2)$, $\bar{\tau} \in (0, 1]$, and $\bar{\gamma}, \bar{\delta} \in (0, 1)$. For $j = 0, 1, 2, \dots$
		\REPEAT
		\STATE {\bfseries Step 1}. Select an element ${\cal H}_j \in \hat{\partial}^{2} \phi(\bar{x}^j)$. Apply the conjugate gradient (CG) method to find an approximate solution $d^j \in \mathbb{R}^{d n}$ to
		\begin{equation}\label{eq: cg-system}
		{\cal H}_j(d) \approx - \nabla \phi(\bar{x}^j)
		\end{equation}
		such that $\|\nabla\phi(\bar{x}^j)+{\cal H}_j(d^j) \| \leq \min(\bar{\gamma}, \|\nabla\phi(\bar{x}^j)\|^{1+\bar{\tau}})$.
		\STATE {\bfseries Step 2}. (Line search) Set $\alpha_j = \bar{\delta}^{m_j}$, where $m_j$ is the first non-negative integer $m$ for which
		$\phi(\bar{x}^j + \bar{\delta}^m d^j) \leq \phi(\bar{x}^j) + \bar{\mu}\bar{\delta}^m \langle \nabla\phi(\bar{x}^j), d^j \rangle$.
		\STATE{\bfseries Step 3}. Set $\bar{x}^{j+1} = \bar{x}^j + \alpha_j d^j$.
		\UNTIL{A stopping criterion  based on $\|\nabla\phi(\bar{x}^{j+1})\|$ is satisfied.}
	\end{algorithmic}
\end{algorithm}

Next in Algorithm \ref{alg:ssncg}, we present the semismooth Newton-CG method for solving \eqref{eq: newton-system}, which can be regarded as a generalized Newton method for solving nonsmooth
equations. The convergence result is given in Theorem \ref{thm:ssn_convergence}.

\begin{theorem}\label{thm:ssn_convergence}
Let $\{\bar{x}^j\}$ be the infinite sequence generated by Algorithm \ref{alg:ssncg}. Then $\{\bar{x}^j\}$ converges to $\bar{x}$, and
\begin{align*}
\|\bar{x}^{j+1}-\bar{x}\| = O(\|\bar{x}^{j}-\bar{x}\|^{1+\bar{\tau}}).
\end{align*}
\end{theorem}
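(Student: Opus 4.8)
The plan is to apply the standard convergence theory for the semismooth Newton method for the nonsmooth equation $\nabla \phi(x)=0$ in \eqref{eq: newton-system}, by verifying its two structural prerequisites for the mapping $\nabla \phi$. First, every element of the generalized Jacobian $\hat{\partial}^2 \phi(x)$ defined in \eqref{eq: nabla-phi} must be symmetric positive definite, which guarantees that the Newton system \eqref{eq: cg-system} is solvable by CG and that the resulting direction $d^j$ is a descent direction. Second, $\nabla \phi$ must be strongly semismooth, which delivers the local convergence rate. Together with the Armijo line search in Step~2 of Algorithm~\ref{alg:ssncg} and the strong convexity of $\phi$, these two ingredients yield both global convergence of $\{\bar{x}^j\}$ to the unique minimizer $\bar{x}$ and the asserted local rate $1+\bar{\tau}$.

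For the positive definiteness, I would use the fact that the proximal mappings ${\rm Prox}_{g/\sigma}$ and ${\rm Prox}_{h/\sigma}$ of the convex functions $g/\sigma$ and $h/\sigma$ are firmly nonexpansive. Consequently, each element ${\cal W} \in \partial {\rm Prox}_{g/\sigma}(\cdot)$ and ${\cal Q} \in \partial {\rm Prox}_{h/\sigma}(\cdot)$ is self-adjoint and satisfies $0 \preceq {\cal W} \preceq I_{d|\mathcal{E}|}$ and $0 \preceq {\cal Q} \preceq I_{dn}$. Hence $I_{d|\mathcal{E}|} - {\cal W} \succeq 0$ and $I_{dn} - {\cal Q} \succeq 0$, so that $\sigma {\cal D}^*(I_{d|\mathcal{E}|} - {\cal W}){\cal D} \succeq 0$ and $\sigma (I_{dn} - {\cal Q}) \succeq 0$. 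Therefore every ${\cal H} \in \hat{\partial}^2 \phi(x)$ obeys ${\cal H} \succeq I_{dn}$, making it positive definite and uniformly bounded below. This ensures the CG solve in Step~1 succeeds, $d^j$ is a descent direction, and the line search in Step~2 is well-defined, so that global convergence follows from the strong convexity of $\phi$.

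The crux of the argument is establishing the strong semismoothness of $\nabla \phi$. Since $\nabla \phi$ is an affine combination of the identity, the linear maps ${\cal D}$ and ${\cal D}^*$, and the proximal mappings ${\rm Prox}_{g/\sigma}$ and ${\rm Prox}_{h/\sigma}$ composed with linear maps, it suffices to show that these two proximal mappings are strongly semismooth. The map ${\rm Prox}_{g/\sigma}$, being the block soft-thresholding operator associated with the group lasso regularization $g$, is piecewise smooth and hence strongly semismooth. The map ${\rm Prox}_{h/\sigma}$ decouples across the $n$ blocks ${\cal M}^i$ into singular value soft-thresholding operators on $\mathbb{R}^{d_1\times d_2}$; its strong semismoothness follows from the fact that this spectral operator is built from the singular value function composed with a Lipschitz piecewise-linear thresholding. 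I expect this to be the main obstacle, since it relies on the differential properties of spectral operators arising from the nuclear norm, rather than the more routine vector-valued proximal analysis used elsewhere.

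Finally, I would invoke the semismooth Newton convergence theorem: with $\nabla \phi$ strongly semismooth and every ${\cal H}\in\hat{\partial}^2\phi(\bar{x})$ nonsingular, once the globally convergent iterates enter a neighborhood of $\bar{x}$, the unit step $\alpha_j=1$ is eventually accepted by the line search, a consequence of semismoothness together with the inexact CG tolerance $\|\nabla\phi(\bar{x}^j)+{\cal H}_j(d^j)\|\leq \|\nabla\phi(\bar{x}^j)\|^{1+\bar{\tau}}$ imposed in Step~1. After this transition, the standard error recursion gives $\|\bar{x}^{j+1}-\bar{x}\| = O(\|\bar{x}^{j}-\bar{x}\|^{1+\bar{\tau}})$, which completes the proposed proof.
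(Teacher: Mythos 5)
Your proposal is correct and follows essentially the same route as the paper's proof: positive definiteness of every element of $\hat{\partial}^2\phi$ (which the paper merely asserts and you justify via firm nonexpansiveness of the proximal maps), strong semismoothness of ${\rm Prox}_{g/\sigma}$ and ${\rm Prox}_{h/\sigma}$ reduced to the block soft-thresholding and singular-value thresholding operators (handled in the paper's Appendix D.2 by citation), and then the standard inexact semismooth Newton theory for global convergence and the $1+\bar{\tau}$ rate. The only difference is that you spell out details the paper delegates to external references.
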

\begin{proof}
For any $x\in \mathbb{R}^{dn}$, it can be seen that all elements in $\hat{\partial}^2 \phi (x)$ are symmetric and positive definite, which, together with \cite[Proposition 3.3]{zhao2010newton}, indicates that Algorithm \ref{alg:ssncg} is well defined. By mimicking the proof of \cite[Theorem 3.4]{zhao2010newton}, we can see that $\{\bar{x}^j\}$ converges to $\bar{x}$.

As it can be seen in Appendix~\ref{sec:proximal_mapping}, ${\rm Prox}_{g/\sigma}$ and ${\rm Prox}_{h/\sigma}$ are strongly semismooth with respect to $\partial {\rm Prox}_{g/\sigma}$ and $\partial {\rm Prox}_{h/\sigma}$, respectively. This implies that $\nabla \phi$ is strongly semismooth with respect to $\hat{\partial}^2\phi$ in \eqref{eq: nabla-phi}. The convergence rate can then be established as in \cite[Theorem 3]{li2018efficiently}. This completes the proof.
\end{proof}

While the proposed double-loop algorithm demonstrates strong theoretical promise, its practical implementation posed several challenges that required careful consideration. These include the design of implementable stopping criteria, as well as the need for efficient procedures to compute the proximal mappings of $g(\cdot)$ and $h(\cdot)$, together with their generalized Jacobian. A detailed discussion can be found in Appendix~\ref{sec:implementation}.

\section{Numerical Experiments}\label{sec:num_exp}
In this section, we compare the clustering performances of three algorithms: (i) our low rank convex clustering algorithm, denoted as ``lrCC'', (ii) low rank Lloyd's algorithm \cite[Algorithm~1]{lyu2022optimal} with tensor based spectral initialization \cite[Algorithm~2]{lyu2022optimal}, denoted as ``lr-Lloyd*'', (iii) low rank Lloyd's algorithm \cite[Algorithm~1]{lyu2022optimal} with random initialization, denoted as ``lr-Lloyd''. We evaluate clustering performance using two standard metrics: the adjusted rand index (ARI) \cite[Equation~(5)]{hubert1985comparing} and normalized mutual information NMI \cite[Equations~(2) and (4)]{kvalseth1987entropy}. Both metrics range from $0$ and $1$, with higher values indicating better clustering performance. All experiments were performed in Matlab (version 9.11) on a Windows workstation (32-core, Intel Xeon Gold 6226R @ 2.90GHz, 128 GB of RAM).

\subsection{Two Quarter-Sphere}
We conduct experiments on two quarter-sphere data, where lrCC can successfully clusters all data points (see Figures~\ref{fig:sphere1} and \ref{fig:sphere_3methods}).
We generate low rank matrices with three nonzero singular values that form two distinct clusters. The singular values of the two clusters are interleaved in three-dimensional space, as illustrated in the left panel of Figure~\ref{fig:sphere1}. Each cluster forms a quarter-sphere, giving rise to the name  ``two quarter-sphere'' data. The data generation process is inspired by the well known benchmark  problem ``two half-moon'' classification, by increasing the dimensionality of nonzero singular values from two to three.
\begin{figure}[!ht]
    \centering
\includegraphics[height=0.2\textheight]{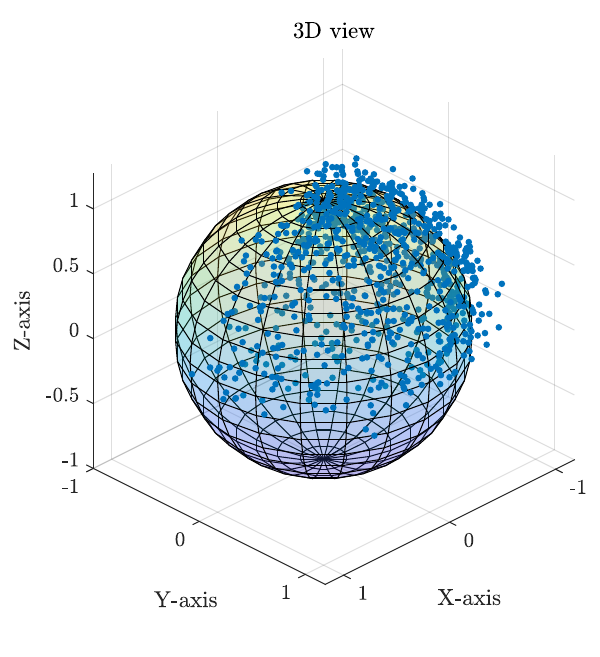}\,
\includegraphics[height=0.2\textheight]{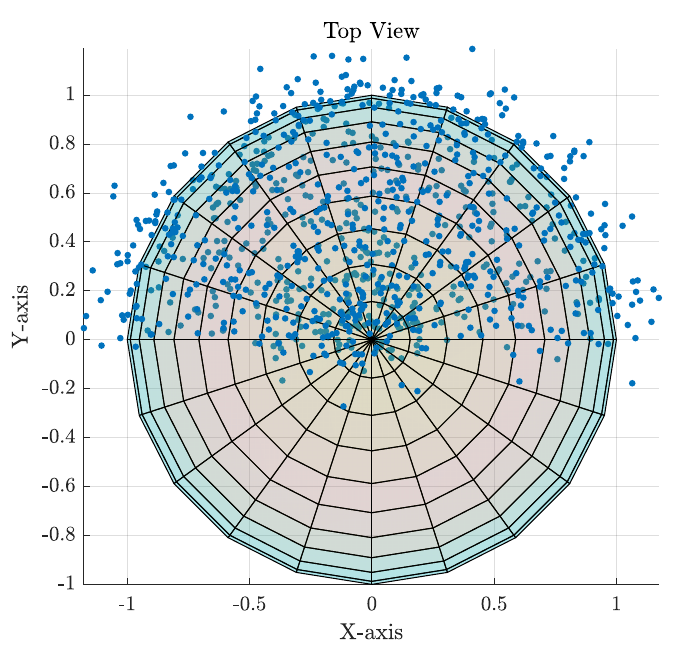}\,
\includegraphics[height=0.2\textheight]{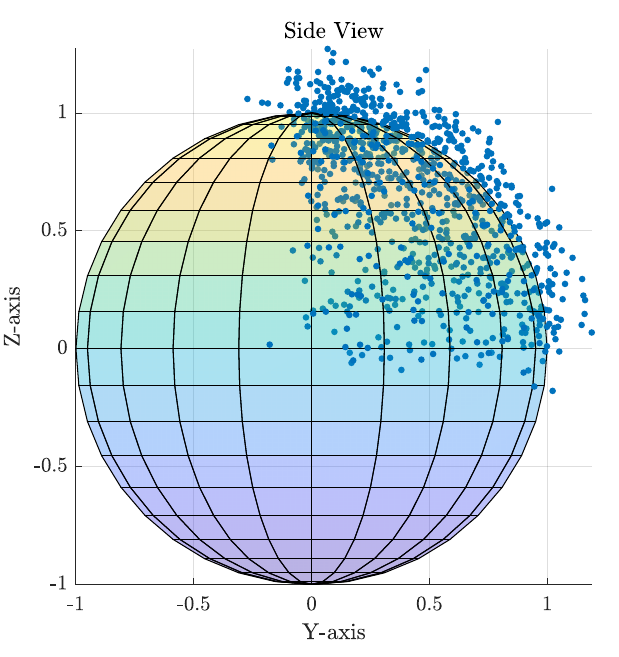}
\caption{Blue points are the singular value vectors $s_i\in\mathbb{R}^3, \ i=1,\dots,1000$ for  observations $A_i, \ i=1,\dots,1000$ in the first cluster \eqref{eq:sphere1},  lying around a quarter-sphere.} \label{fig:viewsphere}
\end{figure}

First, observations from the first cluster are generated independently as follows:
\begin{align}
\begin{array}{l}
s_i=    [ \sin{\vartheta_i}\cos{\theta_i}; \ \sin{\vartheta_i}\sin{\theta_i}; \ \cos{\vartheta_i} ] + 0.1\varepsilon_{i}, \\
A_i = U {\rm Diag}( s_i ) V^{\intercal},  \mbox{where }    \theta_i \sim {\rm Unif}[0,\pi], \, \vartheta_i \sim {\rm Unif}[0,\pi/2], \, \varepsilon_{i} \sim N(0,I_3).
\end{array}
\label{eq:sphere1}
\end{align}
Here, $U\in\mathbb{R}^{d_1\times 3}$ and $V\in\mathbb{R}^{d_2\times 3}$ are the top $3$ left and right singular vectors of a random $d_1\times d_2$ matrix with entries drawn from $N(0,1)$. The notation ${\rm Unif}[a,b]$ denotes the uniform distribution on the interval $[a,b]$. The construction of $s_i$  primarily relies the polar coordinates in three-dimensional spaces.
Notably, the vectors $s_i$ of the singular values of the observations $A_i$ are noisy samples from a quarter of a sphere, as illustrated in Figure~\ref{fig:viewsphere}. From the three-dimensional, top, and side views , we can see clearly from Figure~\ref{fig:viewsphere} that $s_i$'s are arranged around a quarter-sphere.
\begin{figure}[!ht]
    \centering
    \includegraphics[width=0.28\textwidth]{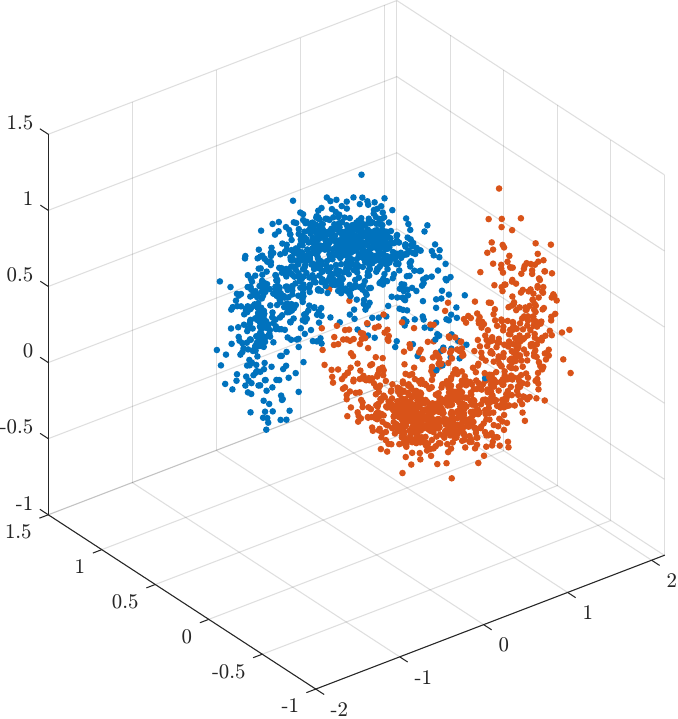}\,\,
    \includegraphics[width=0.28\textwidth]{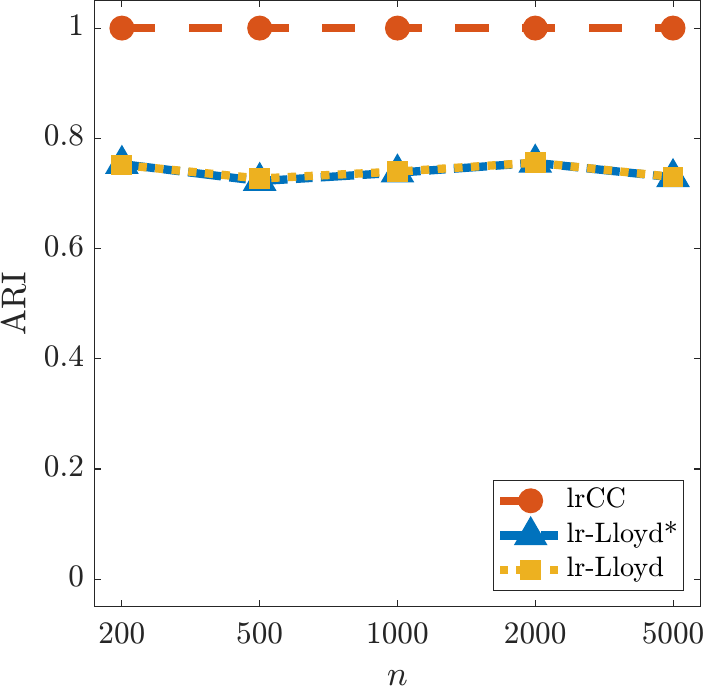}\,\,
    \includegraphics[width=0.28\textwidth]{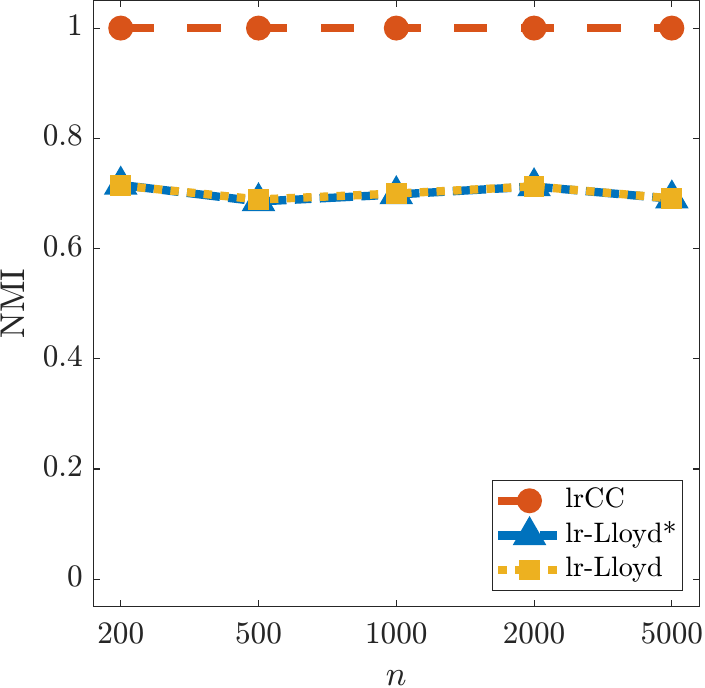}
    \caption{Left:  singular value vectors $s_i$ and $\hat{s}_i$ of $n=2000$ observations from two clusters \eqref{eq:sphere1} and \eqref{eq:sphere2}, represented by blue and red points, respectively. Middle and Right: clustering performance measured by ARI and NMI based on $10$ replications. The dimensions are $d_1=20$ and $d_2=10$. The penalty parameters for lrCC are $(\gamma_1,\gamma_2) = (8,8/15)$.}
    \label{fig:sphere1}
\end{figure}

\vspace{-0.4cm}
\begin{figure}[!ht]
    \centering
    \includegraphics[width=1\textwidth]{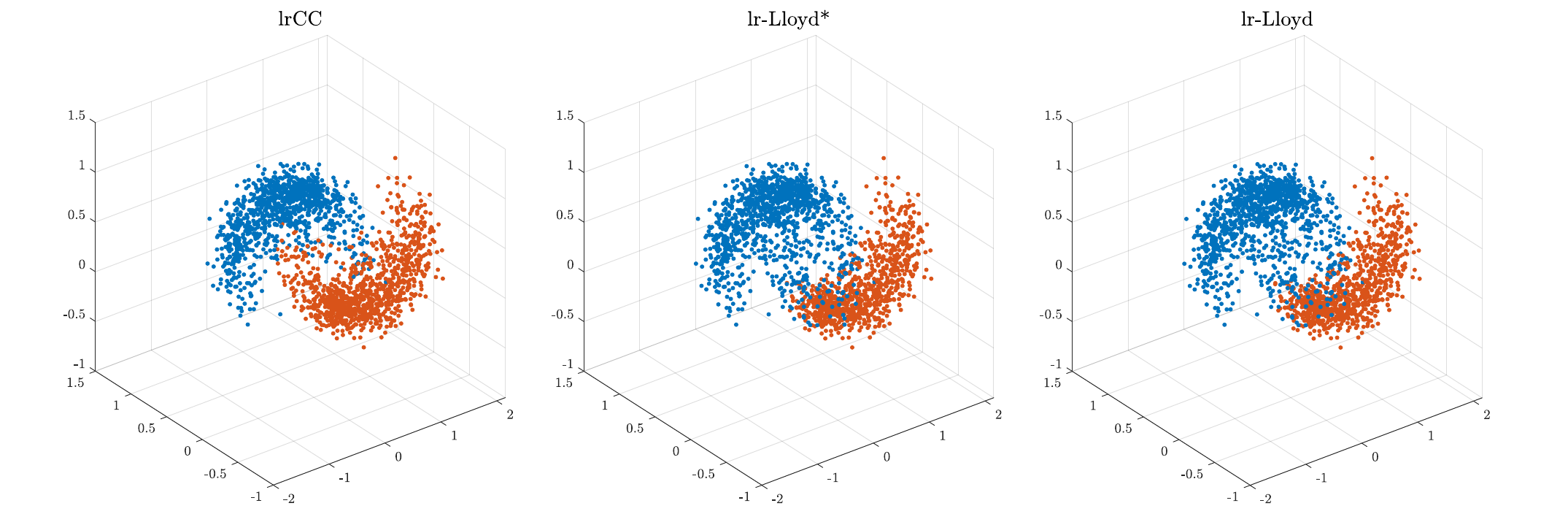}
    \vspace{-0.6cm}
    \caption{Clustering labels of three compared methods. The sample size is $n=2000$.}
    \label{fig:sphere_3methods}
\end{figure}

For the second cluster, we generate samples by reflecting and translating the singular values from the first cluster. Specifically, the samples in the second cluster are constructed as follows:
\begin{align}
\begin{array}{l}
\hat{s}_i=    [ 1 + \sin{\hat{\vartheta}_i}\cos{\hat{\theta}_i}; \ 0.5 - \sin{\hat{\vartheta}_i}\sin{\hat{\theta}_i}; \ 0.5 - \cos{\hat{\vartheta}_i} ] + 0.1 \hat{\varepsilon}_{i},
\\
  A_i = \widehat{U} {\rm Diag}( \hat{s}_i ) \widehat{V}^{\intercal},
\mbox{ where }
\hat{\theta}_i \sim {\rm Unif}[0,\pi], \,
\hat{\vartheta}_i \sim {\rm Unif}[0,\pi/2],  \,
\hat{\varepsilon}_{i} \sim N(0,I_3).
\end{array}\label{eq:sphere2}
\end{align}
Here, $\widehat{U}\in\mathbb{R}^{d_1\times 3}$ and $\widehat{V}\in\mathbb{R}^{d_2\times 3}$ are the top 3 left and right singular vectors of a random $d_1\times d_2$ matrix with entries drawn from $N(0,1)$. We plot the singular value vectors $\hat{s}_i$ in the second cluster \eqref{eq:sphere2} in red, alongside ${s}_i$ in the first cluster \eqref{eq:sphere1} in blue, in the left panel of Figure~\ref{fig:sphere1}.
The figure illustrates noisy observations from two intertwined quarter spheres.

We compare different methods by evaluating clustering performance across various sample sizes $n\in \{200 ,500,1000,2000,5000 \}$, with each cluster containing half of the samples. Clustering performance is evaluated using the standard ARI  and  NMI. The results for the three compared methods are shown in the middle and right panels of Figure~\ref{fig:sphere1}.
Figure~\ref{fig:sphere1} indicates that  our lrCC method successfully identifies the true labels, achieving both ARI and NMI values of one. In contrast,  lr-Lloyd* and  lr-Lloyd achieve ARI and NMI scores below $0.8$. The corresponding clustering labels are displayed in Figure~\ref{fig:sphere_3methods}. The left panel of Figure~\ref{fig:sphere_3methods} aligns with the true labels shown in the left panel of Figure~\ref{fig:sphere1},  confirming that the lrCC method accurately recovers the ground truth. In contrast, the middle and right panels of Figure~\ref{fig:sphere_3methods} show that the lr-Lloyd* and  lr-Lloyd methods misclassify some red points near the blue cluster, consistent with their lower ARI and NMI scores, both below 0.8.

\subsection{Unbalanced Gaussian}
In this section, we construct unbalanced Gaussian data based on the approach in \cite[Section~7.8]{rezaei2016set}.  Specifically, we extend their vector data in $\mathbb{R}^2$ to low rank matrices of rank $2$. For clarity, we give the details of the data generation process:  $K=8$, and the centroid $M_i$ of the $i$-th cluster, $i\in [8]$, is
\begin{align*}
    M_i = U_i
    \begin{pmatrix}
        C_{1i} & \\
         &  C_{2i}
    \end{pmatrix} V_i^{\intercal},\
    C = \begin{pmatrix}
    0.02  & 0.09  &  0.16  &  0.70  &  0.70  &  0.80  &  0.90  &  0.90 \\
    0.48   & 0.55  &  0.48  &  0.36  &  0.60  &  0.48  &  0.36  &  0.60
        \end{pmatrix},
\end{align*}
$U_i\in\mathbb{R}^{d_1\times 2}$ and $V_i\in\mathbb{R}^{d_2\times 2}$ are the top $2$ left and right singular vectors of a random $d_1\times d_2$ matrix with entries drawn from $N(0,1)$.
The observations $A_i$ are then generated with noise $E_i$, where each entry of $E_i$ is drawn independently from $N(0,1)$:
$A_i = M_{s_i^*} + 0.1E_i$, $i\in[n]$. Here $s_i^*\in [8]$ is the true cluster label of $A_i$.
\begin{figure}[!ht]
    \centering
   \includegraphics[width=0.8\textwidth]{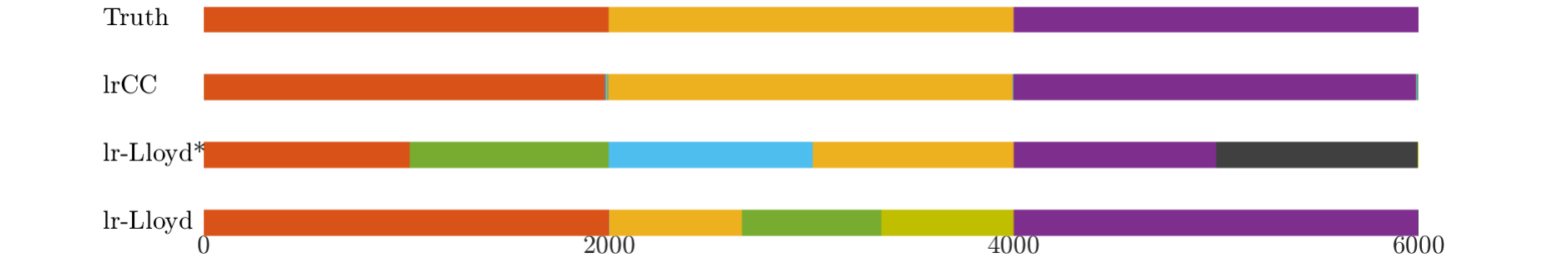}\\[4mm]
    \includegraphics[width=0.8\textwidth]{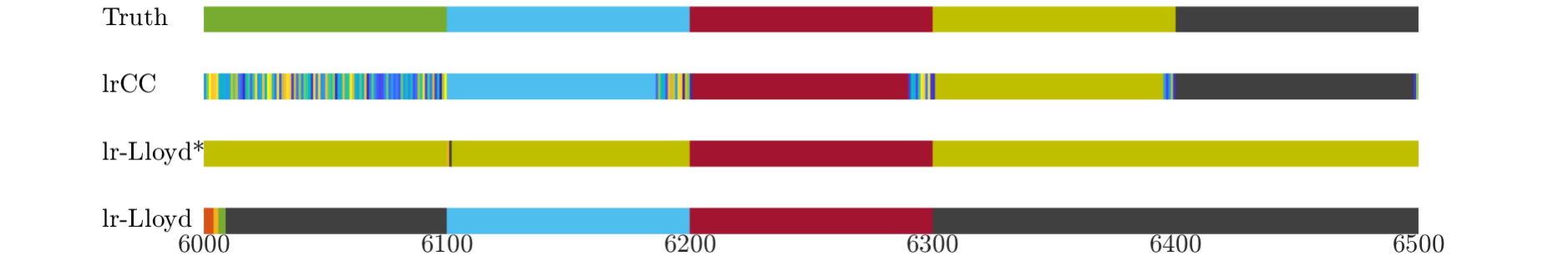}
    \caption{Top: three clusters with $2000$ points. Bottom: five clusters with $100$ points. Points of the same color are assigned into the same cluster. The penalty parameters for lrCC are  $(\gamma_1,\gamma_2) = (0.08,0.04)$.}
    \label{fig:unGauss}
\end{figure}

Following the setup in \cite[Section~7.8]{rezaei2016set}, we assign unbalanced cluster sizes: three clusters contain $2000$ points each, while the remaining five clusters contain $100$ points each.  The clustering results for the three methods, together with the ground truth, are visualized in Figure~\ref{fig:unGauss}. The top figure illustrates the partitioning of the three large clusters, each with $2000$ points, while the bottom figure shows the partitioning of the five small clusters, each containing $100$ points. And the colors in Figure~\ref{fig:unGauss} represent the assigned labels. As shown in Figure~\ref{fig:unGauss},  the lr-Lloyd and lr-Lloyd* methods tend to split the large clusters with $2000$ points into multiple parts. In contrast, our lrCC method can effectively identify these three large clusters but tends to split one of the small clusters with $100$ points into several parts. The ARI and NMI performances of three methods based on $5$ replications are presented in Table~\ref{tab:comparison}(a), where we set the dimensions $d_1=20$, $d_2=10$, and the sample size $n= 6500$. This shows that lrCC achieves a higher ARI score compared to the other two methods, while lr-Lloyd attains a slightly higher NMI score than lrCC.

\begin{table}[!ht]
  \centering
\caption{Comparison of three methods under criteria ARI and NMI.}\label{tab:comparison}
\begin{minipage}{0.42\textwidth}
    \centering
    \begin{tabular}{lcc}
\toprule
Method & ARI & NMI \\
\midrule
lrCC & 0.9849 & 0.9124  \\
lr-Lloyd* & 0.6941  & 0.8206  \\
lr-Lloyd & 0.9161  & 0.9415  \\
\bottomrule
\end{tabular}
    \\[1ex]  
\footnotesize{(a) Results on the unbalanced Gaussian data. The penalty parameters for lrCC are $(\gamma_1,\gamma_2) = (0.08,0.04)$.}
  \end{minipage} \hspace{1cm}
  \begin{minipage}{0.42\textwidth}
    \centering
    \begin{tabular}{lcc}
\toprule
Method & ARI & NMI \\
\midrule
lrCC & 0.8475 & 0.9007  \\
lr-Lloyd* & 0.6579  & 0.7406  \\
lr-Lloyd & 0.6267  & 0.7185  \\
\bottomrule
\end{tabular}
\\[1ex]
\footnotesize{(b) Results on the digits data. The penalty parameters for lrCC are $(\gamma_1,\gamma_2) = (10^{-0.5},10^{-1.5})$.}
  \end{minipage}
\end{table}


\subsection{Handwritten Digits}
In this section, we use the handwritten digit samples from the UCI Machine Learning Repository\footnote{\url{https://archive.ics.uci.edu/dataset/81/pen+based+recognition+of+handwritten+digits}}. The downloaded data\footnote{\url{https://scikit-learn.org/stable/auto_examples/datasets/plot_digits_last_image.html}} includes  $8\times 8$ pixel grayscale images of digits (0 through 9), captured from pen-based input devices. The dimensions are $d_1=8$, $d_2=8$, and the sample size is $n = 1797$, where each digit appears approximately $170$ to $180$ times.
To illustrate the appearance of the digits, we display one representative digit from each cluster in Figure~\ref{fig:3}.
\begin{figure}[!ht]
    \centering   \includegraphics[width=0.8\textwidth]{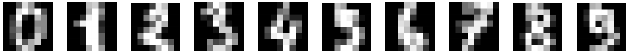}
    \caption{Illustration of one digit from each cluster in the handwritten digits data.}
    \label{fig:3}
\end{figure}


We apply the three methods lrCC, lr-Lloyd*, and lr-Lloyd to identify $K=10$ clusters in the handwritten digits data. The clustering performance, measured by ARI and NMI, is presented in Table~\ref{tab:comparison}(b). Table~\ref{tab:comparison}(b) shows that our lrCC achieves higher scores of ARI and NMI than both lr-Lloyd* and lr-Lloyd, indicating that lrCC outperforms the other two methods. To understand the clustering behaviors of the three methods, we embed each digit image into a two-dimensional space and visualize the results in Figure~\ref{fig:digits}, with each color representing a different digit. For this low-dimensional embedding, we use t-distributed stochastic neighbor embedding (t-SNE) \cite{hinton2002stochastic,van2008visualizing}, a popular nonlinear dimensionality reduction technique that is particularly effective at revealing the underlying structure of complex data sets and provides enhanced partitioning of the digits.
\begin{figure}[!ht]
    \centering
     \includegraphics[width=0.095\textwidth]{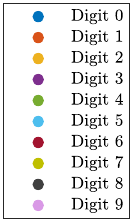}
     \includegraphics[width=0.88\textwidth]{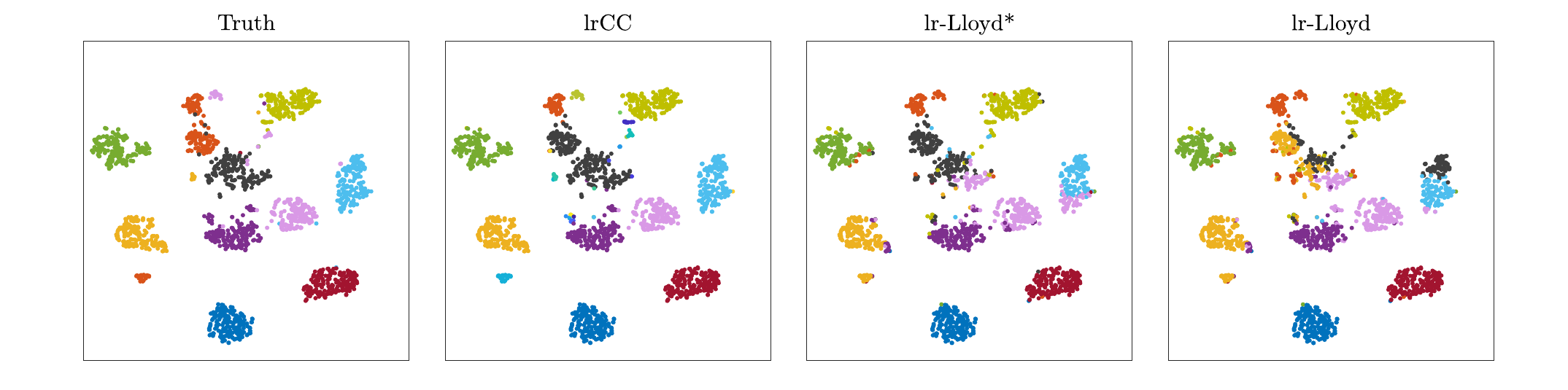}
   \caption{Visualization of the handwritten digits data using t-SNE, with each color representing a different digit.}
    \label{fig:digits}
\end{figure}

From the ``Truth'' and ``lrCC'' panels in Figure~\ref{fig:digits}, we observe that lrCC correctly identifies most digits. However, some instances of Digit $1$ (orange-red points in the ``Truth'' panel) are misclassified as Digit $8$ (dark-gray points in the ``lrCC'' panel). This challenge in distinguishing between Digits $1$ and $8$ is also evident in the results of the lr-Lloyd* and lr-Lloyd methods. The lr-Lloyd* method, similar to lrCC, misclassifies some instances of Digit $1$  as Digit $8$, and it additionally misclassifies several instances of Digit $8$  (dark-gray in the ``Truth'' panel) as Digit $9$ (light-purple in the ``lr-Lloyd*'' panel). The lr-Lloyd method shows even poorer performance with Digits $1$ and $8$, misclassifying some instances of the two digits as Digit $2$, and further splitting the Digit $8$ cluster into several groups mixed with Digits $1$, $2$, $7$, and $9$. Furthermore, for Digit $5$ (light-blue points in the ``Truth'' panel), lr-Lloyd* misclassifies some instances as Digit $9$, while lr-Lloyd incorrectly labels some as Digits $8$ and $9$. Overall,  lrCC has a substantially  better clustering result compared to lr-Lloyd* and lr-Lloyd as shown in Figure~\ref{fig:digits}, which aligns with the metrics in Table~\ref{tab:comparison}(b).

\subsection{Fashion MNIST}
In this section, we use the fashion MNIST data, which serves as a more challenging alternative to the original MNIST dataset. Fashion MNIST  consists of $70000$ grayscale images  categorized into $10$ different classes of clothing and accessories, including items like shirts, shoes, sneakers, and bags. Each image has dimensions of $d_1=28$ by $d_2=28$ pixels. The dataset is publicly available, and
more information and access to the data can be found on GitHub\footnote{\url{https://github.com/zalandoresearch/fashion-mnist?tab=readme-ov-file##get-the-data}}.

\begin{figure}[!ht]
    \centering
     \includegraphics[width=0.95\textwidth]{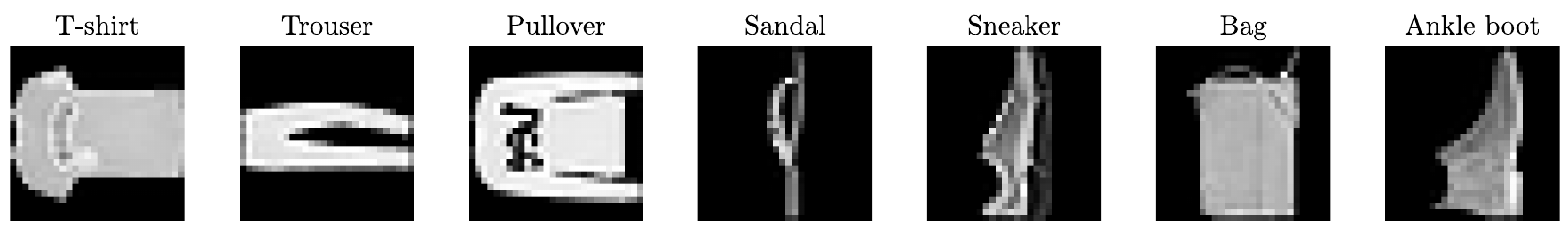}
     \vspace{-0.2cm}
    \caption{Illustration of one image from each cluster in the fashion MNIST data.}
    \label{fig:fmnist}
\end{figure}

\vspace{-0.2cm}
\begin{figure}[!ht]
    \centering
\includegraphics[width=1\textwidth]{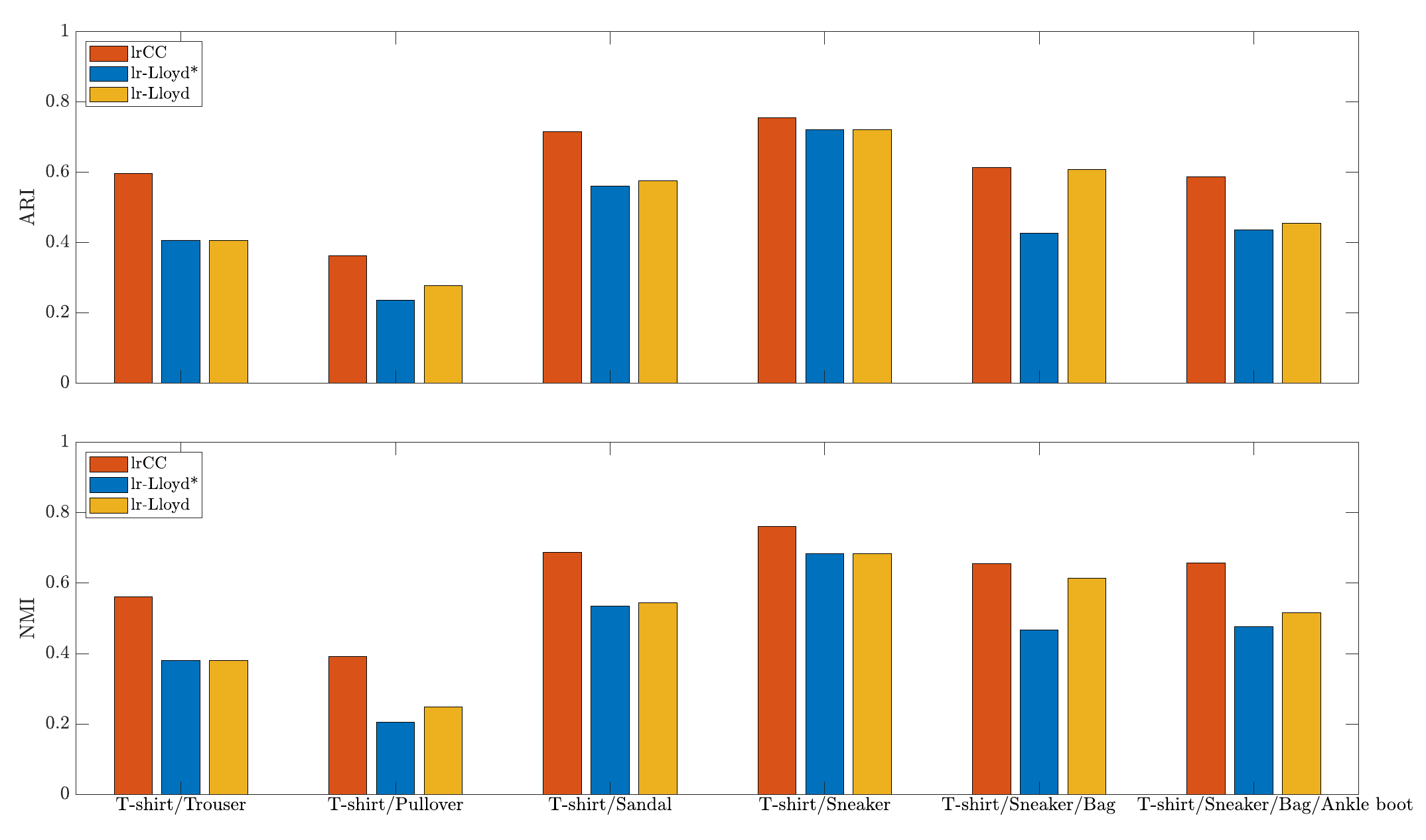}
\vspace{-0.7cm}
    \caption{Clustering performance on the fashion MNIST data. The penalty parameters for lrCC are $(\gamma_1,\gamma_2) = (1,0.1)$. The $x$-axis shows the cluster pairs or groups being distinguished, corresponding to different tasks.}
    \label{fig:fmnist_ARI}
\end{figure}

In our experiment, we randomly select $100$ samples from each of the $10$ clusters.  Given the difficulty of accurately classifying all $10$ categories simultaneously, we focus on specific binary, three-class, and four-class classification tasks. In Figure~\ref{fig:fmnist}, we provide a sample image from the clusters used in these classification tasks.
To evaluate the clustering performance of the three methods, we calculate ARI and NMI scores across a series of tasks, which include: two-cluster  classification between ``T-shirt'' and ``Trouser'' (as well as ``T-shirt'' vs. ``Pullover'', ``T-shirt'' vs. ``Sandal'', ``T-shirt'' vs. ``Sneaker''), three-cluster  classification among ``T-shirt'', ``Sneaker'', and ``Bag'', and four-cluster  classification among ``T-shirt'', ``Sneaker'', ``Bag'', and ``Ankle boot''. These tasks reflect a progressively challenging classification setting as additional clusters are included. The ARI and NMI scores for each method across these tasks are displayed in Figure~\ref{fig:fmnist}. It shows that the lrCC method consistently achieves higher ARI and NMI scores than lr-Lloyd* and lr-Lloyd, indicating its superior clustering accuracy across various tasks.

\section{Conclusion}\label{sec:conclu}
In this work, we introduced a novel low rank convex clustering model designed for matrix-valued observations, extending existing convex clustering methods for vector-valued data. We established rigorous cluster recovery results and conducted a comprehensive analysis of prediction error, further enhancing the model's practical applicability. To efficiently solve the proposed model, we developed a double-loop algorithm. Extensive numerical experiments were performed to validate the model, demonstrating its effectiveness in clustering matrix-valued data. Our results suggest that the proposed model is a valuable tool for a wide range of applications in data science and machine learning. Future work could focus on extending the model to handle more complex data structures or exploring its scalability for large-scale data sets, including the application of the adaptive sieving strategy \cite{yuan2023adaptive,yuan2022dimension} to reduce the dimensionality and accelerate computational speed.


\appendix

\section{Auxiliary Results}
\subsection{Pairwise Distance of Gaussian Samples}
\label{sec: dis_G}

Suppose that $n$ samples $\{A_i\}_{i=1}^n$  are drawn from \eqref{model-lrmm}. We form a matrix of samples $A\in \mathbb{R}^{d\times n}$, whose $i$-th column is the vectorization of the sample $A_i$. We also form the corresponding matrix of means $C\in \mathbb{R}^{d\times n}$, whose $i$-th column is the vectorization of the sample mean $M_{s_i^*}$.

From \cite{dasgupta2007spectral} and \cite[Lemma 6.3]{kumar2010clustering}, we know that
\begin{align*}
\|A-C\|_2 = O\left(\sigma\sqrt{n}\ {\rm polylog}\left( \frac{d}{\pi_{\min}}
\right)\right),
\end{align*}
with high probability. Since the above norm $\|\cdot\|_2$ is the spectral norm and the matrix $C$ is of rank $K$, we further have that
\begin{align*}
\|A-C\|_F = O\left(\sigma\sqrt{nK}\ {\rm polylog}\left( \frac{d}{\pi_{\min}}
\right)\right),
\end{align*}
with high probability, which further indicates that
\begin{align*}
\max_{  i\in [n]} \|A_i-M_{s_i^*}\|_F = O\left(\sigma\sqrt{K} {\rm polylog}\left(\frac{d}{\pi_{\min}}\right)\right).
\end{align*}
Therefore,
by denoting $\mathcal{I}_{\alpha} = \{i \in [n] \mid s_i^*= \alpha\}, \ \alpha \in [K]$,
we have
\begin{align*}
& \max_{  i,j\in {\cal I}_{\alpha}} \|A_i-A_j\|_F
=\max_{  i,j\in {\cal I}_{\alpha}} \|A_i- M_{\alpha} + M_{\alpha} - A_j\|_F \\
\leq & \max_{  i,j\in {\cal I}_{\alpha}} \left( \|A_i- M_{s_i^*}\|_F + \|A_j- M_{s_j^*}\|_F \right) = O\left(\sigma\sqrt{K} {\rm polylog}\left(\frac{d}{\pi_{\min}}\right)\right),
\end{align*}
with high probability.

\subsection{Lower Bound of $\sigma_{\min}(B)$ for $k$-nearest Neighbor Graph}\label{sec:prooflemma6}
Consider a $k$-nearest neighbor graph ${\mathcal{G}}=([n],\mathcal{E})$, where each node in $[n]$ is connected to its $k$-nearest neighbors based on the Euclidean distances, and $k$ is a given number. Then the following lemma provides a lower bound for the smallest nonzero singular value
of its incidence matrix.
\begin{lemma}\label{lemma:lb_knear}
Given $k\geq 2$. For the $k$-nearest neighbor graph ${\mathcal{G}}=([n],\mathcal{E})$, we have
\begin{align*}
\sigma_{\min}(B) \geq \frac{2}{n}\sqrt{\frac{k+1}{3}},
\end{align*}
where $B\in \mathbb{R}^{n\times |{\cal E}|}$ represents the incidence matrix for the graph ${\mathcal{G}}$.
\end{lemma}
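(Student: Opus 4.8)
The plan is to translate the statement about the minimum nonzero singular value of the incidence matrix $B$ into a spectral statement about the graph Laplacian, and then to control the algebraic connectivity of each connected component through its diameter. Writing $L := BB^{\intercal}\in\mathbb{R}^{n\times n}$ for the graph Laplacian of $\mathcal{G}=([n],\mathcal{E})$, the nonzero singular values of $B$ are exactly the square roots of the positive eigenvalues of $L$, so $\sigma_{\min}(B)^2$ equals the smallest positive eigenvalue of $L$. Since $L$ is block diagonal across the connected components $C_1,\dots,C_p$ of $\mathcal{G}$, this smallest positive eigenvalue is $\min_{1\le r\le p}\lambda_2(C_r)$, where $\lambda_2(C_r)$ denotes the algebraic connectivity (smallest positive Laplacian eigenvalue) of the component $C_r$. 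Thus it suffices to prove $\lambda_2(C_r)\ge \frac{4(k+1)}{3n^2}$ for every $r$.

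Fix a component $C$ on $m\le n$ vertices. First I would invoke the classical spectral bound of Mohar relating algebraic connectivity to diameter, namely $\lambda_2(C)\ge \frac{4}{m\,\operatorname{diam}(C)}$ for any connected graph on $m$ vertices. Given this, the whole problem reduces to the combinatorial estimate $\operatorname{diam}(C)\le \frac{3m}{k+1}$: combining the two yields $\lambda_2(C)\ge \frac{4}{m}\cdot\frac{k+1}{3m}=\frac{4(k+1)}{3m^2}\ge \frac{4(k+1)}{3n^2}$, and taking square roots over all components gives $\sigma_{\min}(B)\ge \frac{2}{n}\sqrt{(k+1)/3}$, as claimed.

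It therefore remains to bound the diameter, which is the crux of the argument and the only place where the $k$-nearest neighbor structure enters. The key property I would use is that in a $k$-nearest neighbor graph every vertex is joined to its $k$ nearest neighbors, so each vertex $v$ has $\deg(v)\ge k$ and hence a closed neighborhood of size $|N[v]|\ge k+1$. Let $v_0,v_1,\dots,v_D$ be a shortest path realizing $D:=\operatorname{diam}(C)$; along a shortest path one has $d(v_i,v_j)=|i-j|$. If two closed neighborhoods $N[v_{3a}]$ and $N[v_{3b}]$ with $a<b$ shared a vertex, the triangle inequality would force $d(v_{3a},v_{3b})\le 2$, contradicting $d(v_{3a},v_{3b})=3(b-a)\ge 3$; hence the closed neighborhoods of the $\lfloor D/3\rfloor+1$ vertices $v_0,v_3,v_6,\dots$ are pairwise disjoint subsets of $C$. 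Counting vertices gives $(k+1)\big(\lfloor D/3\rfloor+1\big)\le m$, and since $\lfloor D/3\rfloor+1\ge (D+1)/3$ this rearranges to $D+1\le \frac{3m}{k+1}$, which is even slightly stronger than the needed $\operatorname{diam}(C)\le \frac{3m}{k+1}$.

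I expect the main obstacle to be pinning down the exact constant: the target $\frac{2}{n}\sqrt{(k+1)/3}$ is recovered only if Mohar's constant is exactly $4$ and if the neighborhood packing uses every third vertex (spacing at least $3$) rather than every second, so I would be careful that the disjointness argument is applied to vertices at graph distance at least $3$ and that the floor estimate $\lfloor D/3\rfloor + 1\ge (D+1)/3$ is kept exact. A minor point to verify is that $k\ge 2$ guarantees no isolated vertices, so every component has at least $k+1$ vertices and a well-defined positive $\lambda_2$, and that $m\le n$ is used only at the very end to replace $m$ by $n$ in the denominator.
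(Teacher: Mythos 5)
Your proposal is correct and follows essentially the same route as the paper: reduce to connected components via the block structure of $L=BB^{\intercal}$, apply Mohar's bound $\lambda_2\ge 4/(m\,\operatorname{diam})$, and bound the diameter by $3m/(\delta+1)$ with $\delta\ge k$. The only difference is that you supply a self-contained neighborhood-packing proof of the diameter bound, whereas the paper simply cites the Erd\H{o}s--Pach--Pollack--Tuza theorem for it; your packing argument (disjoint closed neighborhoods of size $\ge k+1$ at every third vertex of a diametral shortest path) is a correct proof of that cited result.
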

\begin{proof}
We first consider the case when ${\cal G}$ is a connected $k$-nearest neighbor graph. According to \cite{mohar1991laplacian}, we have that
\begin{align*}
\sigma_{\min}(B)\geq \frac{2}{\sqrt{{\rm diag}({\cal G}) n}}.
\end{align*}
In addition, from \cite[Theorem 1]{erdHos1989radius}, it holds that
\begin{align*}
{\rm diag}({\cal G}) \leq \frac{3n}{\delta+1},
\end{align*}
where $\delta$ is the minimum degree of the graph ${\cal G}$, which is obviously lower bounded by $k$. Therefore, we have that
\begin{align*}
\sigma_{\min}(B)\geq \frac{2}{\sqrt{{\rm diag}({\cal G}) n}}\geq \frac{2}{n}\sqrt{\frac{\delta+1}{3}} \geq \frac{2}{n}\sqrt{\frac{k+1}{3}} .
\end{align*}

Next, we proceed to examine the case when ${\cal G}$ has $\kappa_0>1$ connected components. Let {$\mathcal{G}_1,\dots,\mathcal{G}_{\kappa_0}$} be the connected components in ${\mathcal{G}}$, whose incidence matrices are $B_1,\dots,B_{\kappa_0}$, respectively. Denote the number of vertices in the graph ${\cal G}_i$ as $n_i$ for $i=1,\cdots,\kappa_0$. Then we have that
\begin{align*}
\sigma_{\min}(B) = \min_{i\in [\kappa_0]} \{ \sigma_{\min}(B_i) \}.
\end{align*}
For each $i=1,\cdots,\kappa_0$, since ${\cal G}_i$ is a connected $k$-nearest neighbor graph with $n_i$ nodes. Then we have that
\begin{align*}
\sigma_{\min}(B_i) \geq \frac{2}{n_i}\sqrt{\frac{k+1}{3}} \geq \frac{2}{n}\sqrt{\frac{k+1}{3}},
\end{align*}
which further implies that
\begin{align*}
\sigma_{\min }(B)\geq \frac{2}{n}\sqrt{\frac{k+1}{3}}.
\end{align*}
This completes the proof.
\end{proof}

\subsection{Lemmas}
\label{sec:lemma_subgaussian}
In order to prove Theorem \ref{thm:main_con}, we need to present three lemmas. Lemma~\ref{lemma: subgaussian}(i) provides the well known Hanson-Wright inequality \cite{hanson1971bound} regarding tail probabilities for the quadratic form of independent centered sub-Gaussian random variables; see also \cite[Theorem~1.1]{zajkowski2020bounds}. Lemma~\ref{lemma: subgaussian}(ii)  is a direct consequence of sub-Gaussian tail bounds, since the  linear combination $b^{\intercal} z$ is sub-Gaussian with mean zero and variance $\sigma^2\|b\|^2$ if all $z_i$'s are independent sub-Gaussian random variables with mean zero and variance $\sigma^2$.

\begin{lemma}\label{lemma: subgaussian}
Let $z\in \mathbb{R}^p$ be a random vector of independent sub-Gaussian random variables with mean zero and variance $\sigma^2$. Then we have the following conclusions.
\begin{itemize}[left=5pt, labelsep=3pt, itemsep=2pt]
\item[{\rm (i)}] Let $H\in \mathbb{R}^{p\times p}$ be a symmetric deterministic matrix. Then, there exist constants $c_{H,1}>0$ and $c_{H,2}>0$ such that for any $t>0$,
$\displaystyle
\mathbb{P}(z^{\intercal} H z\geq t+\sigma^2 {\rm tr}(H)) \leq \exp \left\{ -\min \left( \frac{c_{H,1} t^2 }{\sigma^4 \|H\|_F^2},\frac{c_{H,2} t}{ \sigma^2 \|H\|_2}\right)\right\}.
$
\item[{\rm (ii)}] Let $b\in \mathbb{R}^p$ be a deterministic vector. Then, for any $t>0$,
$ \mathbb{P}( |b^{\intercal} z|\geq t ) \leq 2 \exp \left( -\frac{t^2}{2\sigma^2\|b\|^2}
\right) $.
\end{itemize}
\end{lemma}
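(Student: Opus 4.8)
The plan is to treat the two parts separately, since part~(ii) is an elementary moment-generating-function computation while part~(i) is an application of the Hanson--Wright inequality quoted from the literature.

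For part~(ii), I would first observe that the linear combination $b^{\intercal} z = \sum_{i=1}^p b_i z_i$ is itself sub-Gaussian. Using independence of the $z_i$ and the defining bound $\mathbb{E}[\exp(s z_i)]\leq \exp(\sigma^2 s^2/2)$ recorded in the Notation section, the moment generating function factorizes as
\begin{align*}
\mathbb{E}[\exp(s\, b^{\intercal} z)] = \prod_{i=1}^p \mathbb{E}[\exp(s b_i z_i)] \leq \prod_{i=1}^p \exp\!\Big(\tfrac{\sigma^2 s^2 b_i^2}{2}\Big) = \exp\!\Big(\tfrac{\sigma^2\|b\|^2 s^2}{2}\Big),
\end{align*}
for every $s\in\mathbb{R}$. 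Hence $b^{\intercal} z$ is sub-Gaussian with mean zero and variance $\sigma^2\|b\|^2$, and the stated inequality $\mathbb{P}(|b^{\intercal} z|\geq t)\leq 2\exp(-t^2/(2\sigma^2\|b\|^2))$ is exactly the generic sub-Gaussian tail bound recalled at the end of the Notation section, applied with variance parameter $\sigma^2\|b\|^2$.

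For part~(i), I would first dispose of the centering term. Because the $z_i$ are independent and mean zero with variance $\sigma^2$, we have $\mathbb{E}[z_iz_j]=\sigma^2\delta_{ij}$, so
\begin{align*}
\mathbb{E}[z^{\intercal} H z]=\sum_{i,j}H_{ij}\,\mathbb{E}[z_iz_j]=\sigma^2\sum_{i}H_{ii}=\sigma^2\,{\rm tr}(H).
\end{align*}
Thus the event $\{z^{\intercal}Hz\geq t+\sigma^2{\rm tr}(H)\}$ is precisely a one-sided deviation of $z^{\intercal}Hz$ above its mean by $t$. I would then invoke the Hanson--Wright inequality in the form of \cite[Theorem~1.1]{zajkowski2020bounds}: for a vector of independent centered sub-Gaussian entries and any symmetric $H$, the quadratic form concentrates around its mean at the two-term (Frobenius/spectral) rate. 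Since a mean-zero variance-$\sigma^2$ sub-Gaussian variable has sub-Gaussian norm of order $\sigma$, the Frobenius term scales as $\sigma^4\|H\|_F^2$ and the spectral term as $\sigma^2\|H\|_2$; absorbing the universal Hanson--Wright constant into $c_{H,1}$ and $c_{H,2}$ and discarding the other tail of the two-sided bound then yields exactly the claimed one-sided estimate.

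The proof involves no genuine obstacle, as both parts are standard; the only point requiring care is the bookkeeping in part~(i), namely matching the normalization between the variance $\sigma^2$ used throughout the paper and the sub-Gaussian norm appearing in the cited version of Hanson--Wright, so that the $\sigma^4\|H\|_F^2$ and $\sigma^2\|H\|_2$ scalings emerge as stated. The symmetry of $H$ is used only to phrase the inequality in its customary form and plays no essential role otherwise.
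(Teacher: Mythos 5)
Your proposal is correct and follows essentially the same route as the paper, which likewise obtains part~(i) by citing the Hanson--Wright inequality of \cite{hanson1971bound} (see also \cite[Theorem~1.1]{zajkowski2020bounds}) after centering at $\sigma^2\,{\rm tr}(H)$, and part~(ii) by noting that $b^{\intercal}z$ is sub-Gaussian with mean zero and variance $\sigma^2\|b\|^2$ and applying the generic tail bound from the Notation section. Your write-up merely fills in the moment-generating-function factorization and the mean computation that the paper leaves implicit.
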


The following lemma characterizes the rank and minimum nonzero singular value of $\mathcal{D}=B^{\intercal}\otimes I_d$.

\begin{lemma} \label{lemma: D}
For the matrix $\mathcal{D}$, we have
$ {\rm rank}(\mathcal{D}) = d(n - \kappa_0) $, where $\kappa_0$  is the number of connected components in ${\mathcal{G}}=([n],\mathcal{E})$, and $\sigma_{\min}(\mathcal{D}) = \sigma_{\min}(B)$.
\end{lemma}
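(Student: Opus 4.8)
The plan is to reduce everything to two classical facts: the rank of the (signed) incidence matrix of a graph, and the behavior of rank and singular values under Kronecker products. Recall that $\mathcal{D} = B^{\intercal}\otimes I_d$, where $B \in \mathbb{R}^{n\times |\mathcal{E}|}$ is the incidence matrix whose column associated with the edge $l(i,j)$ is $e_i - e_j$; equivalently, stacking the blocks $D^{l(i,j)} = (e_i^{\intercal}-e_j^{\intercal})\otimes I_d$ vertically reproduces $\mathcal{D}$, so the two descriptions of $\mathcal{D}$ coincide.

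First I would establish ${\rm rank}(B) = n - \kappa_0$. Since ${\rm rank}(B) = {\rm rank}(B^{\intercal})$, it suffices to compute $\dim \ker(B^{\intercal})$, i.e.\ the space of $u \in \mathbb{R}^n$ satisfying $u_i = u_j$ for every edge $l(i,j)\in\mathcal{E}$. Any such $u$ must be constant along any path in $\mathcal{G}$, hence constant on each connected component; conversely, every vector that is constant on each component lies in this kernel. Therefore $\dim\ker(B^{\intercal}) = \kappa_0$, giving ${\rm rank}(B^{\intercal}) = n - \kappa_0$. Combining this with the multiplicativity of rank under Kronecker products yields
\[
{\rm rank}(\mathcal{D}) = {\rm rank}(B^{\intercal}\otimes I_d) = {\rm rank}(B^{\intercal})\cdot{\rm rank}(I_d) = (n-\kappa_0)\,d,
\]
which is the first claim.

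For the singular value identity, I would take a singular value decomposition $B^{\intercal} = U\Sigma V^{\intercal}$ and observe that
\[
\mathcal{D} = B^{\intercal}\otimes I_d = (U\otimes I_d)(\Sigma\otimes I_d)(V\otimes I_d)^{\intercal}
\]
is itself a singular value decomposition, since $U\otimes I_d$ and $V\otimes I_d$ are orthogonal and $\Sigma\otimes I_d$ is diagonal with nonnegative entries. Consequently the singular values of $\mathcal{D}$ are precisely those of $B^{\intercal}$, each repeated $d$ times. As $B$ and $B^{\intercal}$ share the same set of nonzero singular values, the smallest nonzero singular value of $\mathcal{D}$ coincides with $\sigma_{\min}(B)$, proving the second claim.

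There is essentially no serious obstacle here; the lemma is a routine consequence of the Kronecker structure. The one point requiring care is the convention, fixed in the Notation paragraph, that $\sigma_{\min}$ denotes the minimum \emph{nonzero} singular value: the rank deficiency of $B$ forces $\mathcal{D}$ to carry many zero singular values, and these must be excluded before reading off $\sigma_{\min}(\mathcal{D}) = \sigma_{\min}(B)$.
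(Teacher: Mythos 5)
Your proof is correct and follows essentially the same route as the paper's: multiplicativity of rank under the Kronecker product for the first claim, and the fact that the singular values of $B^{\intercal}\otimes I_d$ are those of $B$ each repeated $d$ times for the second. The only difference is that you derive the two ingredients the paper simply cites (the incidence-matrix rank formula ${\rm rank}(B)=n-\kappa_0$ and the Kronecker singular-value fact) from scratch, via the kernel of $B^{\intercal}$ and an explicit Kronecker SVD, which makes the argument self-contained but does not change the approach.
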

\begin{proof}
(i) According to \cite[Theorem~2.3]{bapat2010graphs}, we can see that ${\rm rank}(B)= n-k$. Thus, we have that ${\rm rank}(\mathcal{D}) = {\rm rank}(B) \times {\rm rank}(I_d) = d(n-k)$.

(ii) Suppose that $B$ has $r$ nonzero singular values: $\sigma_1,\cdots, \sigma_r$. Then by \cite[Chapter~5]{merris1997multilinear}, we know that $\mathcal{D} = B^{\intercal}\otimes I_d $ has $d r$ nonzero singular values:
\begin{align*}
\underbrace{\sigma_{1},\cdots,\sigma_{1}}_{d}, \cdots, \underbrace{\sigma_{r},\cdots,\sigma_{r}}_{d}.
\end{align*}
Therefore, we can see that $\sigma_{\min}(\mathcal{D}) = \sigma_{\min}(B)$. This completes the proof.
\end{proof}

Lemma \ref{lemma: D} provides the rank of $\mathcal{D}$, and therefore we can have its singular value decomposition $\mathcal{D} = U S V_1^{\intercal}$, where $S\in \mathbb{R}^{d(n-\kappa_0)\times d(n-\kappa_0)}$ is a diagonal matrix, $U\in \mathbb{R}^{d|\mathcal{E}|\times d(n-\kappa_0)}$ and $V_1\in \mathbb{R}^{dn\times d(n-\kappa_0)}$ satisfy $U^{\intercal}U = I_{d(n-\kappa_0)}$, $V_1^{\intercal}V_1 = I_{d(n-\kappa_0)}$. Then there exists $V_2\in \mathbb{R}^{dn\times d\kappa_0}$ such that $V=[V_1,V_2]\in \mathbb{R}^{dn\times dn}$ is an orthogonal matrix. In the following lemma, we establish the bound for the quadratic form consisting of the matrix $V_2 V_2^{\intercal}$ and independent sub-Gaussian random variables.
\begin{lemma} \label{lemma: D2}
Let $\epsilon\in \mathbb{R}^{dn}$ be a random vector of independent sub-Gaussian random variables with mean zero and variance $\sigma^2$. Then there exist constants $c_1>0$ and $c_2>0$ such that the following inequality holds:
\begin{align*}
   \mathbb{P}\left(\! \frac{1}{dn}\epsilon^{\intercal} V_2 V_2^{\intercal}\epsilon \!\geq \!\sigma^2 \left[ \frac{\kappa_0}{n} \!\!+\!\! \sqrt{\frac{\kappa_0\log(dn)}{dn^2}}\right]\right)
     \!\leq \! \exp\!\left\{\! -\!\min \left( c_1 \!\log(dn),c_2\! \sqrt{ d\kappa_0\log(dn)}
    \right)\!\right\}.
\end{align*}
\end{lemma}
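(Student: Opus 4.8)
The plan is to apply the Hanson--Wright inequality of Lemma~\ref{lemma: subgaussian}(i) directly to the quadratic form $\epsilon^{\intercal} V_2 V_2^{\intercal}\epsilon$, taking $H = V_2 V_2^{\intercal}$. First I would record the three scalar quantities attached to $H$ that appear in that inequality. Since $V_2$ has orthonormal columns, $V_2^{\intercal}V_2 = I_{d\kappa_0}$, the matrix $H = V_2 V_2^{\intercal}$ is a symmetric orthogonal projection of rank $d\kappa_0$, so $H^2 = H$. This immediately gives ${\rm tr}(H) = {\rm tr}(V_2^{\intercal}V_2) = d\kappa_0$, $\|H\|_F^2 = {\rm tr}(H^{\intercal}H) = {\rm tr}(H) = d\kappa_0$, and $\|H\|_2 = 1$.

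Plugging these into Lemma~\ref{lemma: subgaussian}(i) yields, for every $t > 0$,
\begin{equation*}
\mathbb{P}\left(\epsilon^{\intercal}V_2 V_2^{\intercal}\epsilon \geq t + \sigma^2 d\kappa_0\right) \leq \exp\left\{-\min\left(\frac{c_1 t^2}{\sigma^4 d\kappa_0}, \frac{c_2 t}{\sigma^2}\right)\right\},
\end{equation*}
where $c_1, c_2$ are the constants supplied by the lemma (its $c_{H,1}, c_{H,2}$ with the specialized norms substituted in). The next step is to choose $t$ so that the threshold $t + \sigma^2 d\kappa_0$ coincides with the event in the statement. Multiplying the target deviation by $dn$ and simplifying the square root via $\sigma^2 dn \sqrt{\kappa_0 \log(dn)/(dn^2)} = \sigma^2 \sqrt{d\kappa_0 \log(dn)}$ shows that the desired threshold equals $\sigma^2 d\kappa_0 + \sigma^2 \sqrt{d\kappa_0\log(dn)}$. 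I would therefore set $t = \sigma^2\sqrt{d\kappa_0\log(dn)}$.

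With this choice the two branches of the minimum collapse exactly to the terms in the conclusion: $\frac{c_1 t^2}{\sigma^4 d\kappa_0} = c_1 \log(dn)$ and $\frac{c_2 t}{\sigma^2} = c_2\sqrt{d\kappa_0\log(dn)}$, and dividing the event through by $dn$ recovers the stated bound. There is no substantive obstacle here; the proof is essentially a calibrated application of Hanson--Wright, and the only point demanding care is the algebraic reconciliation of the square-root term so that $t$ is matched consistently against both entries of the minimum. It is worth emphasizing that the separation of the bound into the deterministic mean $\sigma^2\kappa_0/n$ (from ${\rm tr}(H)/(dn)$) and the fluctuation $\sigma^2\sqrt{\kappa_0\log(dn)/(dn^2)}$ is precisely what the two regimes of the Hanson--Wright tail encode, so the alignment is structural rather than coincidental.
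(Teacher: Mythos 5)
Your proposal is correct and follows essentially the same route as the paper: compute ${\rm tr}(V_2V_2^{\intercal}) = d\kappa_0$, $\|V_2V_2^{\intercal}\|_F = \sqrt{d\kappa_0}$, $\|V_2V_2^{\intercal}\|_2 = 1$ using the projection property, apply the Hanson--Wright inequality of Lemma~\ref{lemma: subgaussian}(i), and set $t = \sigma^2\sqrt{d\kappa_0\log(dn)}$. The algebraic reconciliation of the square-root term is also exactly as in the paper's proof.
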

\begin{proof}
It can be easily seen that
$$
{\rm tr}(V_2 V_2^{\intercal}) = dk, \quad
\|V_2 V_2^{\intercal}\|_F = \sqrt{{\rm tr}(V_2 V_2^{\intercal}V_2 V_2^{\intercal}) } = \sqrt{{\rm tr}(V_2 V_2^{\intercal}) } = \sqrt{{\rm tr}(V_2^{\intercal} V_2) } = \sqrt{dk}.
$$
Moreover, since $V_2 V_2 ^{\intercal}V_2 V_2 ^{\intercal} = V_2 V_2 ^{\intercal}$, we have that $V_2 V_2 ^{\intercal}$ is a projection matrix with $\|V_2 V_2 ^{\intercal}\|_2 = 1$. According to Lemma \ref{lemma: subgaussian}, there exist constants $c_1>0$ and $c_2>0$ such that for any $t>0$,
\begin{align*}
\mathbb{P}\left(\frac{1}{dn}\epsilon^{\intercal} V_2 V_2 ^{\intercal} \epsilon \geq \frac{t+\sigma^2 dk}{dn}\right) \leq \exp \left\{ -\min \left( \frac{c_{1} t^2 }{\sigma^4 dk},\frac{c_{2} t}{ \sigma^2}\right)\right\}.
\end{align*}
By taking $t = \sigma^2 \sqrt{dk\log(dn)}$, we have
\begin{align*}
   \mathbb{P}\left(\! \frac{1}{dn}\epsilon^{\intercal} V_2 V_2^{\intercal}\epsilon \!\geq \!\sigma^2 \left[ \frac{\kappa_0}{n} \!\!+\!\! \sqrt{\frac{\kappa_0\log(dn)}{dn^2}}\right]\right)
     \!\leq \! \exp\!\left\{\! -\!\min \left( c_1 \!\log(dn),c_2\! \sqrt{ d\kappa_0\log(dn)}
    \right)\!\right\}.
\end{align*}
This completes the proof.
\end{proof}

\section{Proofs of Propositions}
\subsection{Proof of Proposition \ref{prop: connection_two_problems}}\label{sec:proof_of_prop_merging}
Since $(X^{(1),*},\dots,X^{(G),*})$ is the optimal solution to \eqref{eq: cluster_opt}, there must exist $Z^{(\alpha),*}\in \partial \|X^{(\alpha),*}\|_*$ and $U^{(\alpha,\beta),*}\in \partial \|X^{(\alpha),*}-X^{(\beta),*}\|_F$ for all $\alpha,\beta\in [G]$, satisfying $U^{(\alpha,\beta),*} = -U^{(\beta,\alpha),*}$ when $\alpha \neq \beta$, such that
\begin{align}
X^{(\alpha),*} - A^{\Gamma,\alpha} + \frac{\gamma_1}{|\mathcal{J}_{\alpha}|}\sum_{\beta\in[G]\backslash \{\alpha\} } w^{(\alpha,\beta)} U^{(\alpha,\beta),*} + \gamma_2 Z^{(\alpha),*}=0,\ \forall \ \alpha \in [G]. \label{eq: opt_cluster}
\end{align}
For each $i\in [n]$ and $\alpha\in[G]$, we define
\begin{align*}
T_{i}^{(\alpha)} &= \sum_{\beta\in [G]\backslash \{\alpha\}} \left( \sum_{j\in \mathcal{J}_{\beta}}w_{ij}  -\frac{w^{(\alpha,\beta)}}{|\mathcal{J}_{\alpha}|}
\right)U^{(\alpha,\beta),*},
\end{align*}
and for $l(i,j)\in \mathcal{E}$, we define
\begin{align*}
U_{ij}^{*} \!=\! \left\{\begin{aligned}
&U^{(\alpha,\beta),*}, && \mbox{if } i\in \mathcal{J}_{\alpha}, j\in \mathcal{J}_{\beta},  \mbox{ with } \alpha,\beta\!\in\! [G], \alpha\!\neq\! \beta,\\
&\frac{1}{|\mathcal{J}_{\alpha}| w_{ij} }  \left[ \frac{A_i\!-\!A_j }{\gamma_1} - (T_{i}^{(\alpha)}\!-\! T_{j}^{(\alpha)})\right]
, && \mbox{if } i,j\in \mathcal{J}_{\alpha} \mbox{ with } i\neq j, \alpha\in [G].
\end{aligned}\right.
\end{align*}
It can be seen that for any $l(i,j)\in \mathcal{E}$, $U_{ij}^{*} = -U_{ji}^{*}$. Moreover, for $i,j\in \mathcal{J}_{\alpha}$ with $i\neq j$, $\alpha \in [G]$, it holds that
\begin{align*}
\|U_{ij}^{*}\|_F
&\leq
\frac{1}{|\mathcal{J}_{\alpha}|w_{ij}\gamma_1 } \|A_i-A_j\|_F + \frac{1}{|\mathcal{J}_{\alpha}|w_{ij} }   \sum_{\beta\in [G]\backslash \{\alpha\}} \Big|\sum_{m\in \mathcal{J}_{\beta}}(w_{im} -w_{jm})
\Big| \| U^{(\alpha,\beta),*} \|_F\\
&\leq \frac{1}{|\mathcal{J}_{\alpha}| w_{ij}\gamma_1 } \|A_i-A_j\|_F + \frac{\eta_{ij}^{\Gamma,\alpha}}{|\mathcal{J}_{\alpha}| }
\leq 1,
\end{align*}
where the second inequality holds as $U^{(\alpha,\beta),*}\in \partial \|X^{(\alpha),*}-X^{(\beta),*}\|_F$, implying its Frobenius norm is at most $1$, and the last inequality follows from \eqref{eq: lb_gamma1_J}. This implies that $U_{ij}^{*}\in \partial \|X_i^*-X_j^*\|_F $ for all $l(i,j)\in \mathcal{E}$. Furthermore, by setting $Z_i^* = Z^{(\alpha),*}$ for all $i\in \mathcal{J}_{\alpha}$, $\alpha\in [G]$, we see that $Z_i^*\in \partial \|X_i^*\|_*$ for all $i\in[n]$.

Finally, we verify that the optimality condition of \eqref{model-convex} holds at $(X_1^*,\dots,X_n^*)$, which means $(X_1^*,\dots,X_n^*)$ is optimal to \eqref{model-convex}. Note that we set by default that $w_{ij}=0$ when $l(i,j)\notin \mathcal{E}$. Given $\alpha\in [G]$, for any $i\in \mathcal{J}_{\alpha}$, we have that
\begin{align*}
&\qquad X_i^* - A_i + \gamma_1 \sum_{l(i,j)\in \mathcal{E}} w_{ij}U_{ij}^* +\gamma_2 Z_i^* \\
&=X^{(\alpha),*}- A_i +\gamma_1 \sum_{\beta \in [G]\backslash \{\alpha \}} \sum_{j\in \mathcal{J}_{\beta}}w_{ij}U^{(\alpha,\beta),*} \\
&\quad+\frac{1}{|\mathcal{J}_{\alpha}|} \sum_{j\in \mathcal{J}_{\alpha}} \left[ A_i-A_j - \gamma_1(T_{i}^{(\alpha)}-T_{j}^{(\alpha)})
\right] +\gamma_2 Z^{(\alpha),*}\\
&=X^{(\alpha),*} +\gamma_1 \sum_{\beta \in [G]\backslash \{\alpha \}} \sum_{j\in \mathcal{J}_{\beta}}w_{ij} U^{(\alpha,\beta),*} -A^{\Gamma,\alpha}-\gamma_1 T_i^{(\alpha)} + \frac{\gamma_1}{|\mathcal{J}_{\alpha}|}\sum_{j\in \mathcal{J}_{\alpha}} T_j^{(\alpha)}+\gamma_2 Z^{(\alpha),*}\\
&= \gamma_1 \sum_{\beta \in [G]\backslash \{\alpha \}} \left( \sum_{j\in \mathcal{J}_{\beta}}w_{ij} -\frac{w^{(\alpha,\beta)}}{|\mathcal{J}_{\alpha}|} \right) U^{(\alpha,\beta),*}-\gamma_1 T_i^{(\alpha)} + \frac{\gamma_1}{|\mathcal{J}_{\alpha}|}\sum_{j\in \mathcal{J}_{\alpha}} T_j^{(\alpha)}\\
&= \frac{\gamma_1}{|\mathcal{J}_{\alpha}|}\sum_{j\in \mathcal{J}_{\alpha}} \sum_{\beta \in [G]\backslash \{\alpha \}} \left( \sum_{m\in \mathcal{J}_{\beta}}w_{jm}
-\frac{w^{(\alpha,\beta)}}{|\mathcal{J}_{\alpha}|} \right) U^{(\alpha,\beta),*} \\
&=\frac{\gamma_1}{|\mathcal{J}_{\alpha}|} \sum_{\beta \in [G]\backslash \{\alpha \}} \left(   \sum_{\substack{j\in \mathcal{J}_{\alpha},\\ m\in \mathcal{J}_{\beta}}}w_{jm} -w^{(\alpha,\beta)} \right) U^{(\alpha,\beta),*}=0,
\end{align*}
where the third equality follows from the optimality condition \eqref{eq: opt_cluster}, the fourth equality follows from the definition of $T_i^{(\alpha)}$, and the last equality follows from the definition of $w^{(\alpha,\beta)}$. This completes the proof.

\subsection{Proof of Proposition \ref{prop: neq}}\label{sec:proof_of_distinguish}
As $(X^{(1),*},\dots,X^{(G),*})$ is optimal to problem \eqref{eq: cluster_opt}, there exist $U^{(\alpha,\beta),*}\in \partial \|X^{(\alpha),*}-X^{(\beta),*}\|_F$ for all $\alpha,\beta\in [G]$, satisfying $U^{(\alpha,\beta),*} = -U^{(\beta,\alpha),*}$ when $\alpha \neq \beta$, such that
$$
X^{(\alpha),*} = {\rm Prox}_{\gamma_2 \|\cdot\|_*}\left( A^{\Gamma,\alpha}- B^{\Gamma,\alpha}\right), \mbox{ where }  B^{\Gamma,\alpha} :=\frac{\gamma_1}{|\mathcal{J}_{\alpha}|}\sum_{m\in[G]\backslash \{\alpha\} } w^{(\alpha,m)} U^{(\alpha,m),*}.$$
We prove by contradiction. Suppose there exist $\alpha,\beta\in [G]$ such that $A^{\Gamma,\alpha}\neq A^{\Gamma,\beta}$ and $X^{(\alpha),*}= X^{(\beta),*}$. Note that
$$
\|X - Y\|_F \leq \gamma_2 \sqrt{d_2 - r},
$$
if $ {\rm Prox}_{\gamma_2 \|\cdot\|_*}(X) = {\rm Prox}_{\gamma_2 \|\cdot\|_*}(Y) $ and the rank is $r$, see Proposition~\ref{prop: nuc2} for the proof. Then
we have
$$
\| (A^{\Gamma,\alpha} - B^{\Gamma,\alpha}) - (A^{\Gamma,\beta} - B^{\Gamma,\beta}) \|_F \leq \gamma_2 \sqrt{d_2 - r}$$ with $r={\rm rank}(X^{(\alpha),*})$. This implies
$$
\|A^{\Gamma,\alpha} - A^{\Gamma,\beta}\|_F \leq \|B^{\Gamma,\alpha} \|_F + \|B^{\Gamma,\beta} \|_F + \gamma_2 \sqrt{d_2 - r} \leq  \gamma_1 \left(w^{(\alpha)} + w^{(\beta)} \right) + \gamma_2 \sqrt{d_2-r},
$$
which contradicts the inequality. The proof is completed.

\section{Extension of Prediction Error Bounds to $M$-concentrated Noise}
\label{sec: Mnoise}
In this section, we are going to extend the analysis on the prediction error of the lrCC model \eqref{model-convex} to the case when $a = [{\rm vec}(A_1);\cdots; {\rm vec}(A_n)]\in \mathbb{R}^{dn}$ satisfies $a=x_0+\epsilon$, with $\epsilon$ being $M$-concentrated, wherein the dependence is allowed. Below is the definition of the $M$-concentrated random vector \cite{vu2015random}.

\begin{definition}\label{def: M_concentrated}
A random vector $z \in \mathbb{R}^p$ is said to be $M$-concentrated (where $M > 0$ is a scalar that may depend on $p$) if there exist constants $c>0$ and $c' > 0$ such that for any convex, $1$-Lipschitz function $\psi: \mathbb{R}^p \rightarrow \mathbb{R}$ and any $t > 0$,
\begin{align*}
    \mathbb{P}\big( \left| \psi(z) - \mathbb{E}[\psi(z)] \right| \geq t \big) \leq c \exp\left(-c'\frac{t^2}{M^2}\right).
\end{align*}
\end{definition}
Note that the concept of an $M$-concentrated random variable is more general than that of a sub-Gaussian random variable, which was used in Section \ref{sec:bound_pre_err}. This generality arises because $M$-concentrated random variables can accommodate dependencies among their coordinates. The following are examples of
$M$-concentrated random variables as provided in \cite{vu2015random}:

\begin{itemize}[left=5pt, labelsep=3pt, itemsep=2pt]
    \item If $z$ is a vector of independent sub-Gaussian random variables with mean zero and variance $\sigma^2$, then $z$ is $\sigma$-concentrated, as shown in \cite[Theorem 5.3]{ledoux2001concentration} and \cite{bizeul2023log}.

    \item If $z_i$ are independent, and each $z_i$ is $M$-bounded for all $i$, then $z$ is $M$-concentrated, as shown  in \cite[Chapter 4]{ledoux2001concentration} and \cite[Theorem F.5]{tao2010random}.

    \item If the coordinates $z_i$ come from a random walk satisfying certain mixing properties, then $z$ is $M$-concentrated, as shown  in \cite[Corollary 4]{samson2000concentration}. Here $z_i$'s are not necessarily independent.
\end{itemize}

We provide a variant of Theorem \ref{thm:main_con} to analyze the finite sample bound for the prediction error of the lrCC model when applied to data with $M$-concentrated noises.

\begin{theorem}\label{thm: Mconcentrated}
Suppose that $a=x_0+\epsilon$, where $\epsilon\in \mathbb{R}^{dn}$ is an $M$-concentrated random vector with mean zero. Let $\hat{x}$ be the solution of problem \eqref{eq: new_vec_form} with the assumption that $ \min_{l(i,j)\in \mathcal{E}}w_{ij}\geq \frac{1}{2}$. If $\gamma_1 \geq\frac{2M}{c_0\sigma_{\min}(B)}\sqrt{2d\log(d|\mathcal{E}|)}$
with some constant $c_0>0$, then
\begin{align*}
\begin{array}{l}
\displaystyle
\frac{1}{2dn} \|\hat{x}-x_0\|^2 \leq  \frac{\kappa_0}{n} + \frac{\sqrt{d\kappa_0} \log(dn)}{dn} \\
\displaystyle+ \frac{\gamma_1}{2dn}\sum_{l(i,j)\in \mathcal{E}}
(1+2 w_{ij}) \left\| D^{l(i,j)} x_0\right\| + \gamma_2 \left[ \frac{M}{n^{1/4}} + \frac{1}{dn}  \sum_{i=1}^n \|\mathcal{M}^i x_0 \|_* \right]
\end{array}
\end{align*}
holds with probability at least
\begin{align*}
1 - \frac{c}{d|\mathcal{E}|} -c_1 \exp \left\{ \log \log(dn) -  \frac{c_2}{M^2} \log(dn)\right\}-c_3 \exp\left(-c_4 \sqrt{d^3n} \right),
\end{align*}
where $c$, $c_1$, $c_2$, $c_3$ and $c_4$ are positive constants. Here $\kappa_0$ denotes the number of connected components and $B\in \mathbb{R}^{n\times |{\cal E}|}$ represents the incidence matrix for the graph ${\mathcal{G}}=([n],\mathcal{E})$.
\end{theorem}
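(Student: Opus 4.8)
The plan is to follow the blueprint of the proof of Theorem~\ref{thm:main_con} essentially verbatim up to the point where probabilistic tail bounds enter, and to replace each sub-Gaussian estimate by its $M$-concentration analogue. Concretely, I would reuse the same change of variables $x = V_1 y + V_2 z$, the same optimality-induced inequality \eqref{eq: hat_bar}, and the same splitting \eqref{eq: G} of $\frac{1}{dn}|\langle V_1(\hat y - y_0) + V_2(\hat z - z_0),\epsilon\rangle|$ into the linear term $\frac{1}{dn}|\epsilon^\intercal V_1(\hat y - y_0)|$, the quadratic term $\frac{1}{dn}|\epsilon^\intercal V_2 V_2^\intercal\epsilon|$, and the coupling term $\frac{\gamma_2}{dn}|\epsilon^\intercal V_2 V_2^\intercal\hat\theta|$. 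Two consequences of Definition~\ref{def: M_concentrated} are used throughout: (a) since any linear functional $b^\intercal\epsilon$ is convex and $\|b\|$-Lipschitz with mean zero, $\mathbb{P}(|b^\intercal\epsilon| \geq t) \leq c\exp(-c't^2/(M^2\|b\|^2))$, which upon integrating the tail yields $\|\mathbb{E}[\epsilon\epsilon^\intercal]\|_2 \leq C M^2$; and (b) for any $A$ with $\|A\|_2 \leq 1$ the map $\epsilon \mapsto \|A\epsilon\|$ is convex and $1$-Lipschitz, hence concentrates around its mean at scale $M$.

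For the linear term I would write $z_i = \epsilon^\intercal V_1 W^\dagger e_i = b_i^\intercal\epsilon$ with $\|b_i\| = \|W^\dagger e_i\| \leq 1/\sigma_{\min}(B)$ (using $V_1^\intercal V_1 = I$), apply fact (a) together with Boole's inequality over $i \in [d|\mathcal{E}|]$, and take $t = \frac{M}{c_0\sigma_{\min}(B)}\sqrt{2\log(d|\mathcal{E}|)}$ to obtain $\mathbb{P}(\|\epsilon^\intercal V_1 W^\dagger\|_\infty \geq t) \leq c/(d|\mathcal{E}|)$. Feeding this into \eqref{eq: epsilony} bounds the linear term and fixes the threshold $\gamma_1 \geq 2\sqrt{d}\,t = \frac{2M}{c_0\sigma_{\min}(B)}\sqrt{2d\log(d|\mathcal{E}|)}$, exactly as stated.

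The quadratic and coupling terms I would handle simultaneously through $\psi(\epsilon) := \|V_2^\intercal\epsilon\|$, which by fact (b) is convex and $1$-Lipschitz. Its mean is controlled by $\mathbb{E}[\psi] \leq \sqrt{\mathbb{E}[\psi^2]} = \sqrt{{\rm tr}(V_2^\intercal\mathbb{E}[\epsilon\epsilon^\intercal]V_2)} \leq \sqrt{\|\mathbb{E}[\epsilon\epsilon^\intercal]\|_2\,{\rm tr}(V_2^\intercal V_2)} \leq M\sqrt{C d\kappa_0}$, using (a) and ${\rm tr}(V_2^\intercal V_2) = d\kappa_0$. For the quadratic term I would write $\frac{1}{dn}\epsilon^\intercal V_2 V_2^\intercal\epsilon = \frac{1}{dn}\psi(\epsilon)^2$ and, on the event $\{\psi(\epsilon) \leq \mathbb{E}[\psi] + t\}$ with $t$ of order $\sqrt{\log(dn)}$, expand the square to reach $\frac{\kappa_0}{n} + \frac{\sqrt{d\kappa_0}\log(dn)}{dn}$ (absorbing the $M$-dependent constants), with failure probability $c_1\exp\{\log\log(dn) - \frac{c_2}{M^2}\log(dn)\}$. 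For the coupling term I would decouple $\epsilon$ from the $\epsilon$-dependent subgradient $\hat\theta$ by Cauchy--Schwarz, $|\epsilon^\intercal V_2 V_2^\intercal\hat\theta| \leq \psi(\epsilon)\,\|\hat\theta\|$, and invoke the deterministic bound $\|\hat\theta\|^2 \leq n\sqrt{d}$ from \eqref{eq: 2normtheta}; concentrating $\psi(\epsilon)$ at the larger scale $t$ of order $M d^{3/4} n^{1/4}$ then yields $\frac{\gamma_2}{dn}|\epsilon^\intercal V_2 V_2^\intercal\hat\theta| \leq \gamma_2 M / n^{1/4}$ with failure probability $c_3\exp(-c_4\sqrt{d^3 n})$.

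Finally, I would substitute the three bounds into \eqref{eq: G} and then into \eqref{eq: hat_bar}, and repeat verbatim the concluding algebra of the proof of Theorem~\ref{thm:main_con} --- the step using $\min_{l(i,j)\in\mathcal{E}} w_{ij} \geq \frac12$ to turn the $\|W^{l(i,j)}(\hat y - y_0)\|$ contributions into $\sum_{l(i,j)\in\mathcal{E}}(1 + 2w_{ij})\|D^{l(i,j)}x_0\|$ as in \eqref{eq: boundVepsilon} --- and union-bound the three failure events to obtain the stated probability. The main obstacle is the quadratic term: because $\epsilon$ is only $M$-concentrated and may have dependent coordinates, the Hanson--Wright route of Lemma~\ref{lemma: D2} is unavailable, so I must instead route entirely through the Lipschitz concentration of $\|V_2^\intercal\epsilon\|$ and separately pin down its mean via the operator-norm bound $\|\mathbb{E}[\epsilon\epsilon^\intercal]\|_2 \leq C M^2$. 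A secondary subtlety is that $\hat\theta$ is $\epsilon$-dependent, which the Cauchy--Schwarz decoupling together with the deterministic subgradient bound \eqref{eq: 2normtheta} resolves cleanly.
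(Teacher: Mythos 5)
Your proof is correct in outline and reaches the stated bound, but it takes a genuinely different route from the paper's for two of the three error terms, so a comparison is worthwhile. The paper handles the quadratic term $\frac{1}{dn}|\epsilon^{\intercal}V_2V_2^{\intercal}\epsilon|$ via Lemma~\ref{lemma: Mb}(i), the Hanson--Wright-type inequality for $M$-concentrated vectors imported from Vu and Wang, and handles the coupling term by applying Lemma~\ref{lemma: Mb}(ii) directly to $\hat{\theta}^{\intercal}V_2V_2^{\intercal}\epsilon$ as if $\hat{\theta}$ were deterministic. You instead route both terms through the Lipschitz concentration of the single convex $1$-Lipschitz functional $\psi(\epsilon)=\|V_2^{\intercal}\epsilon\|$, pinning its mean down via $\mathbb{E}[\psi]\leq\sqrt{\operatorname{tr}(V_2^{\intercal}\mathbb{E}[\epsilon\epsilon^{\intercal}]V_2)}$ and the operator-norm bound $\|\mathbb{E}[\epsilon\epsilon^{\intercal}]\|_2\leq CM^2$ obtained by integrating the linear-functional tails, and you decouple $\hat{\theta}$ from $\epsilon$ by Cauchy--Schwarz before invoking the deterministic bound \eqref{eq: 2normtheta}. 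What your approach buys: it is self-contained (no appeal to the generalized Hanson--Wright result) and it is more careful about the fact that $\hat{\theta}$ depends on $\epsilon$, a dependence the paper's application of Lemma~\ref{lemma: Mb}(ii) silently ignores. The linear term and the concluding algebra (the use of $\min w_{ij}\geq\frac12$ and the union bound) coincide with the paper's.

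Two points to tighten. First, your quadratic-term bound comes out as $\frac{CM^2\kappa_0}{n}+\cdots$ rather than $\frac{\kappa_0}{n}+\cdots$: the extra $M^2$ is unavoidable on your route because $\mathbb{E}[\epsilon^{\intercal}V_2V_2^{\intercal}\epsilon]$ genuinely scales like $M^2 d\kappa_0$ unless the coordinate variances are normalized to one (which is what the paper's Lemma~\ref{lemma: Mb}(i), with $\operatorname{tr}(H)$ rather than a variance-weighted trace, implicitly assumes). You should either state that normalization or keep the $M^2$ in the final bound. Second, your Cauchy--Schwarz decoupling of the coupling term needs $\psi(\epsilon)\leq Md^{3/4}n^{1/4}$ on the good event, and since $\mathbb{E}[\psi]\lesssim M\sqrt{d\kappa_0}$ this requires $\kappa_0\lesssim\sqrt{dn}$, a condition the theorem does not state (though it is harmless in the intended regime where $\kappa_0=O(1)$ and is implied whenever $n\leq d$). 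Neither issue breaks the argument, but both should be made explicit.
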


In order to prove the above theorem, we need to characterize some properties of the $M$-concentrated random variables. Specifically, Lemma \ref{lemma: Mb}(i) describes the tail probabilities for the quadratic form of $M$-concentrated random variables, which is a generalization of the Hanson-Wright inequality for sub-Gaussian random variables in Lemma \ref{lemma: subgaussian}(i). Lemma \ref{lemma: Mb}(ii) states the concentration property of the linear combination of the coordinates of an $M$-concentrated random vector.

\begin{lemma}\label{lemma: Mb}
Let $z\in \mathbb{R}^p$ be an $M$-concentrated random vector. Then the following results hold.
\begin{itemize}[left=5pt, labelsep=3pt, itemsep=2pt]
\item[{\rm (i)}] There exist constants $c>0$ and $c'>0$ such that for any matrix $H\in \mathbb{R}^{p\times p}$ and any $t>0$,
\begin{align*}
\mathbb{P}\left( z^{\intercal} H z\geq t+ {\rm tr}(H) \right)\leq c \log(p)\exp \left\{ -  \frac{c'}{M^2}\min \left( \frac{t^2 }{\|H\|_F^2\log(p)},\frac{t}{ \|H\|_2}\right)\right\}.
\end{align*}
\item[{\rm (ii)}] Let $b\in \mathbb{R}^p$ be a deterministic vector. Then the random variable $b^{\intercal} z$ is $ (M\|b\|) $-concentrated.
\end{itemize}
\end{lemma}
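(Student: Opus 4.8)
The plan is to prove the two parts separately, with part~(ii) serving both as a warm-up and as a tool inside part~(i).

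For part~(ii) the idea is just to compose and rescale. Given any convex, $1$-Lipschitz $\phi:\mathbb{R}\to\mathbb{R}$, the map $\psi(x):=\phi(b^{\intercal}x)$ is convex (a convex function composed with an affine one) and $\|b\|$-Lipschitz, so $\psi/\|b\|$ is convex and $1$-Lipschitz. Applying Definition~\ref{def: M_concentrated} to $\psi/\|b\|$ and then substituting $t\mapsto t/\|b\|$ gives $\mathbb{P}(|\phi(b^{\intercal}z)-\mathbb{E}[\phi(b^{\intercal}z)]|\ge t)\le c\exp(-c't^2/(M\|b\|)^2)$, which is exactly the statement that $b^{\intercal}z$ is $(M\|b\|)$-concentrated. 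This step is routine.

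For part~(i) the governing difficulty is that the coordinates of $z$ may be dependent, so the classical Hanson--Wright proof (which decouples independent entries) is unavailable and the only structural input is the convex concentration property. First I would reduce to symmetric $H$ by replacing $H$ with $(H+H^{\intercal})/2$, which leaves ${\rm tr}(H)$ unchanged and does not increase $\|H\|_F$ or $\|H\|_2$. Splitting $H=H_+-H_-$ into its positive and negative semidefinite parts, I would study the two quadratic forms $z^{\intercal}H_{\pm}z=\|H_{\pm}^{1/2}z\|^2$ separately. The key observation is that $f_{\pm}(z):=\|H_{\pm}^{1/2}z\|$ is convex and Lipschitz with constant $\|H_{\pm}^{1/2}\|_2=\sqrt{\|H_{\pm}\|_2}\le\sqrt{\|H\|_2}$, so by the same composition/rescaling as in part~(ii), $f_{\pm}$ concentrates sub-Gaussianly around $\mu:=\mathbb{E}f_{\pm}$ with variance proxy of order $M^2\|H\|_2$.

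The heart of the argument is then to transfer concentration of the norm $f_{\pm}$ to concentration of its square $z^{\intercal}H_{\pm}z$. Writing $\{f_{\pm}^2\ge\mu^2+t\}=\{f_{\pm}-\mu\ge\sqrt{\mu^2+t}-\mu\}$ and distinguishing the regimes $t\le\mu^2$ and $t>\mu^2$, the sub-Gaussian tail of $f_{\pm}$ produces a Bernstein-type two-regime bound $\exp(-c\min(t^2/v_{\pm},\,t/b_{\pm}))$ whose exponential scale is $b_{\pm}\asymp M^2\|H\|_2$, matching the $t/(M^2\|H\|_2)$ term in the statement. Obtaining the Gaussian-regime term with the Frobenius scale $\|H\|_F^2$ (rather than a cruder operator-norm/trace product) together with the logarithmic factor is the delicate point: here a maximal inequality over the $p$ spectral directions $u_i^{\intercal}z$, each $M$-concentrated and hence sub-Gaussian by part~(ii), is used, and the union bound over these $p$ directions supplies both the $\log p$ prefactor and the $\log p$ inside the variance proxy. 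An equivalent, perhaps cleaner, route would be to follow the decoupling-free derivation of Hanson--Wright under convex concentration, which directly yields the two-regime bound with the Frobenius- and operator-norm scales, the precise powers of $M$ and the logarithmic factor emerging from the bookkeeping. Finally I would combine the one-sided estimates for $H_+$ and $H_-$ by a union bound and recenter: since $\mathbb{E}[z]=0$ gives $\mathbb{E}[z^{\intercal}Hz]={\rm tr}(H\Sigma)$ with $\Sigma$ the covariance of $z$, and the coordinatewise variances are $O(M^2)$ by the concentration hypothesis, the mean is controlled by a constant multiple of ${\rm tr}(H)$, paralleling the $\sigma^2{\rm tr}(H)$ centering of the sub-Gaussian Hanson--Wright bound in Lemma~\ref{lemma: subgaussian}(i).

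I expect the transfer from the norm $f_{\pm}$ to the quadratic form---extracting the correct $\|H\|_F$ dependence in the Gaussian regime while paying only the logarithmic price $\log p$---to be the main obstacle, precisely because the dependence among the coordinates of $z$ rules out the standard decoupling proof and forces one to rely solely on the convex concentration property, so that the sharp variance proxy must be recovered indirectly through the maximal-inequality step rather than from a direct moment computation.
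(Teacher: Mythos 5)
Your part~(ii) is exactly the paper's argument: compose an arbitrary convex $1$-Lipschitz $\phi$ with the linear map $x\mapsto b^{\intercal}x$, rescale by $\|b\|$ to restore the $1$-Lipschitz property, and apply Definition~\ref{def: M_concentrated}. Nothing to add there.

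For part~(i) you and the paper part ways: the paper does not prove this inequality at all, it simply quotes \cite[Theorem~1.4]{vu2015random}, which is precisely the Hanson--Wright-type bound for convex-concentrated vectors with the $c\log(p)$ prefactor and the $\|H\|_F^2\log(p)$ variance proxy. Your attempt to rederive it is where the gap lies. The reduction to symmetric $H$, the split $H=H_+-H_-$, and the observation that $f_\pm(z)=\|H_\pm^{1/2}z\|$ is convex and $\sqrt{\|H\|_2}$-Lipschitz are all fine, and the square-root trick $\{f^2\ge \mu^2+t\}=\{f-\mu\ge\sqrt{\mu^2+t}-\mu\}$ does give a two-regime bound. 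But the Gaussian regime it produces has variance proxy of order $M^2\|H\|_2\,(\mathbb{E}f_\pm)^2\lesssim M^2\|H_\pm\|_2\,\mathrm{tr}(H_\pm)$, which you correctly flag as cruder than $M^2\|H\|_F^2\log p$ --- and your proposed repair, ``a maximal inequality over the $p$ spectral directions $u_i^{\intercal}z$, [with] the union bound over these $p$ directions suppl[ying] both the $\log p$ prefactor and the $\log p$ inside the variance proxy,'' does not work as stated: a union bound over $p$ events yields a prefactor $p$, not $\log p$, and knowing that each $u_i^{\intercal}z$ is individually sub-Gaussian does not recover the joint control needed for $\sum_i\lambda_i(u_i^{\intercal}z)^2$ at the Frobenius scale, because the directions are dependent. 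The actual mechanism in \cite{vu2015random} is a dyadic decomposition of the spectrum of $H_\pm$ into $O(\log p)$ blocks by eigenvalue magnitude, applying the norm-concentration argument to each block (where operator norm and trace are comparable) and union-bounding over the blocks; that is where both occurrences of $\log p$ come from. So either fill in that dyadic argument or, as the paper does, cite the result.

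One smaller point: your centering step asserts that $\mathbb{E}[z^{\intercal}Hz]=\mathrm{tr}(H\Sigma)$ is ``a constant multiple of $\mathrm{tr}(H)$'' because the coordinate variances are $O(M^2)$. The lemma centers at $\mathrm{tr}(H)$ exactly, with no constant, and for indefinite $H$ the comparison between $\mathrm{tr}(H\Sigma)$ and $\mathrm{tr}(H)$ is not automatic; this needs either the normalization built into the cited theorem or an explicit additional argument (it is harmless in the paper's application, where $H=V_2V_2^{\intercal}$ is a projection, but your general claim is not justified as written).
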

\begin{proof}
(i) This result is taken from \cite[Theorem 1.4]{vu2015random}. (ii) Take an arbitrary convex, $1$-Lipschitz function $\phi:\mathbb{R}\rightarrow \mathbb{R}$. We construct a convex function $\psi:\mathbb{R}^p\rightarrow \mathbb{R}$: $\psi(x) = \frac{1}{\|b\|}\phi( b^{\intercal} x),\,  x\in \mathbb{R}^p$. The function $\psi$ is $1$-Lipschitz since for any $x,y\in \mathbb{R}^p$ it holds that
\begin{align*}
|\psi(x)-\psi(y)|=\frac{1}{\|b\|} \left| \phi(b^{\intercal} x)- \phi(b^{\intercal} y) \right|\leq \frac{1}{\|b\|} \|b^{\intercal} x - b^{\intercal} y\| \leq \|x-y\|.
\end{align*}
By the definition of $\psi$ and the fact that $z\in \mathbb{R}^p$ is $M$-concentrated, there exist $c>0$ and $c'>0$ such that
\begin{align*}
\mathbb{P}\left( \left| \phi(b^{\intercal} z)- \mathbb{E}[\phi(b^{\intercal} z)]
\right| \geq t \right) = \mathbb{P}\left( \left| \psi(z)- \mathbb{E}[ \psi(z)]
\right| \geq \frac{t}{\|b\| } \right)\leq c \exp\left( -c'\frac{t^2}{M^2 \|b\|^2}
\right).
\end{align*}
This implies that $b^{\intercal} z$ is $(M\|b\|)$-concentrated, which completes the proof.
\end{proof}

Now we are ready to give the proof of Theorem \ref{thm: Mconcentrated}. For brevity, only a concise outline of the proof is provided.

\begin{proof}[\textbf{Proof of Theorem \ref{thm: Mconcentrated}}]
By mirroring the analysis in the proof of Theorem \ref{thm:main_con}, we have the inequalities \eqref{eq: hat_bar} and \eqref{eq: G}. We need to bound the terms $\frac{1}{dn} \left|\epsilon^{\intercal} V_1 (\hat{y}-y_0)\right|$, $\frac{1}{dn}\left| \epsilon^{\intercal} V_2 V_2^{\intercal}\epsilon \right| $, and $\frac{\gamma_2}{dn} \left| \epsilon^{\intercal} V_2V_2^{\intercal}\hat{\theta}\right|$.

\textbf{Bound for $\frac{1}{dn} \left|\epsilon^{\intercal} V_1 (\hat{y}-y_0)\right|$:} According to the inequality \eqref{eq: epsilony}, we only need to bound $\|z\|_{\infty}$, where $z = (\epsilon^{\intercal} V_1 W^{\dagger})^{\intercal} \in \mathbb{R}^{d|\mathcal{E}|}$. For any $i\in [d|\mathcal{E}|]$, according to Lemma \ref{lemma: Mb}(ii), we can see that the $z_i$ is $(M \|V_1 W^{\dagger} e_i\|)$-concentrated with mean zero. Moreover, for each $i$, we have
\begin{align*}
\|V_1 W^{\dagger} e_i\|\leq \|V_1 W^{\dagger}\|_2 \leq \sqrt{\sigma_{\max}( (W^{\dagger})^{\intercal}  W^{\dagger})}  = \sigma_{\max}( W^{\dagger})=\frac{1}{\sigma_{\min}(B)}.
\end{align*}
By applying Boole's inequality and considering the function
$\psi(\cdot)$ in Definition \ref{def: M_concentrated} as the identity map, we can establish that there exist constants $c_0>0, \ c>0$ such that for any $t>0$,
\begin{align*}
&\mathbb{P}\left(\left\| \epsilon^{\intercal} V_1 W^{\dagger}\right\|_{\infty}\geq t\right) =  \mathbb{P}(\|z\|_{\infty} \geq t) \\
\leq &\sum_{i\in [d|\mathcal{E}|]}\mathbb{P}(|z_i|\geq t) \leq  c d|\mathcal{E}|\exp\left(-c_0^2\frac{t^2\sigma_{\min}^2(B)}{M^2}\right).
\end{align*}
Based on \eqref{eq: epsilony}, and by taking $t = \frac{M}{c_0\sigma_{\min}(B)} \sqrt{2\log (d|\mathcal{E}|)}$ in the above inequality, we have
\begin{align}
&\mathbb{P}\left( \frac{1}{dn} \left|\epsilon^{\intercal} V_1 (\hat{y}-y_0)\right| \geq  \frac{M}{dn c_0 \sigma_{\min}(B) }\sqrt{2d\log(d|\mathcal{E}|)} \sum_{l(i,j)\in \mathcal{E}}  \left\| W^{l(i,j)} (\hat{y}-y_0)\right\| \right) \label{eq: control_2norm_M} \\
& \leq \mathbb{P}\left(\left\| \epsilon^{\intercal} V_1 W^{\dagger}\right\|_{\infty}\geq \frac{M}{c_0\sigma_{\min}(B)} \sqrt{2\log (d|\mathcal{E}|)}  \right)
\leq \frac{c}{d|\mathcal{E}|}.\nonumber
\end{align}

\textbf{Bound for $\frac{1}{dn}\left| \epsilon^{\intercal} V_2 V_2^{\intercal}\epsilon \right| $.} According to Lemma \ref{lemma: Mb}(i), there exist constants $c_1>0$ and $c_2>0$ such that for any $t>0$,
\begin{align*}
\mathbb{P}\left(\frac{1}{dn}\epsilon^{\intercal} V_2 V_2 ^{\intercal} \epsilon \geq \frac{t+ d\kappa_0}{dn}\right) \leq c_1 \log(dn)\exp \left\{ -  \frac{c_2}{M^2}\min \left( \frac{t^2 }{d\kappa_0 \log(dn)},t \right)\right\},
\end{align*}
where the fact that ${\rm tr}(V_2 V_2^{\intercal})={\rm tr}(V_2^{\intercal} V_2) = d\kappa_0$, $\|V_2 V_2^{\intercal}\|_F^2 = {\rm tr}(V_2 V_2^{\intercal}) = d\kappa_0$, and $\|V_2 V_2^{\intercal}\|_2 = \|V_2^{\intercal}V_2\|_2 = 1$ is used. By taking $t = \log(dn) \sqrt{d\kappa_0}$, we have
\begin{align}
\mathbb{P}\left(\frac{1}{dn}\epsilon^{\intercal} V_2 V_2 ^{\intercal} \epsilon \geq \frac{\log(dn)\sqrt{d\kappa_0}+d\kappa_0}{dn}
\right)   \leq c_1 \exp \left\{ \log \log(dn) -  \frac{c_2}{M^2} \log(dn)\right\}. \label{eq: eve_M}
\end{align}

\textbf{Bound for $\frac{\gamma_2}{dn} \left| \epsilon^{\intercal} V_2V_2^{\intercal}\hat{\theta}\right|$.} From Lemma \ref{lemma: Mb}(ii), we have that the random variable $\hat{\theta}^{\intercal} V_2V_2^{\intercal} \epsilon$ is $(M\| V_2V_2^{\intercal}\hat{\theta}\|)$-concentrated with mean zero. This means that there exist $c_3>0,\ c_4>0$ such that
\begin{align*}
\mathbb{P}\left(\frac{1}{dn} \left| \epsilon^{\intercal} V_2V_2^{\intercal}\hat{\theta}\right|>t\right) \leq c_3 \exp\left(-c_4\frac{t^2 d^2 n^2}{ M^2 \| V_2V_2^{\intercal}\hat{\theta}\|^2} \right)\leq c_3 \exp\left(-c_4\frac{t^2 d^{3/2} n}{ M^2 } \right),
\end{align*}
where the last inequality holds since $\|\hat{\theta}\|^2\leq n \sqrt{d}$, which comes from the proof of Theorem \ref{thm:main_con}. Picking $t = M/n^{1/4}$ in the above inequality, we have
\begin{align}
\mathbb{P}\left(\frac{1}{dn} \left| \epsilon^{\intercal} V_2V_2^{\intercal}\hat{\theta}\right|>\frac{M}{n^{1/4}}\right) \leq c_3 \exp\left(-c_4 \sqrt{d^3n} \right). \label{eq: 2norm_epsilon_M}
\end{align}

Thus, when $\gamma_1 \geq\frac{2M}{c_0\sigma_{\min}(B)}\sqrt{2d\log(d|\mathcal{E}|)}$, it follows from the inequalities \eqref{eq: G}, \eqref{eq: control_2norm_M}, \eqref{eq: eve_M}, and \eqref{eq: 2norm_epsilon_M} that
\begin{align*}
\frac{1}{dn}\left| \langle V_1 (\hat{y}-\bar{y}) + V_2 (\hat{z}-\bar{z}),\epsilon\rangle\right| \leq & \ \frac{\gamma_1}{2dn }
\sum_{l(i,j)\in \mathcal{E}} \left\| W^{l(i,j)} (\hat{y}-y_0)\right\| \\
& \ + \frac{\kappa_0}{n} + \frac{\sqrt{d\kappa_0} \log(dn)}{dn} +\frac{\gamma_2M}{n^{1/4}},
\end{align*}
with probability at least 
\begin{align*}
p'_0:= 1 - \frac{c}{d|\mathcal{E}|} -c_1 \exp \left\{ \log \log(dn) -  \frac{c_2}{M^2} \log(dn)\right\}-c_3 \exp\left(-c_4 \sqrt{d^3n} \right).
\end{align*}
Therefore, we can further obtain that
\begin{align*}
\frac{1}{2dn} \|\hat{x}-x_0\|^2 \leq &\  \frac{\kappa_0}{n} + \frac{\sqrt{d\kappa_0} \log(dn)}{dn}  + \frac{\gamma_1}{2dn}\sum_{l(i,j)\in \mathcal{E}}
(1+2 w_{ij}) \left\| D^{l(i,j)} x_0\right\| \\
&\ + \gamma_2 \left[ \frac{M}{n^{1/4}} + \frac{1}{dn}  \sum_{i=1}^n \|\mathcal{M}^i x_0 \|_* \right]
\end{align*}
with probability at least $p'_0$. This completes the proof.
\end{proof}

Although we do not explicitly analyze  prediction consistency as $n$ tends to infinity, the result can be derived analogously to the sub-Gaussian case. To avoid redundancy and conserve space, we omit the detailed derivation.

\section{Implementation Details}
\label{sec:implementation}
In this section, we present the key implementation details of our proposed algorithm, including the implementable stopping criteria and the elementary computations necessary for its efficient execution.

\subsection{Implementable Stopping Criteria of ALM}
\label{sec:implementable_stopping}
To ensure the global and local convergence of Algorithm \ref{alg:alm} when applying Algorithm \ref{alg:ssncg} to solve the ALM subprproblems, we need the practical implementation of the stopping criteria \eqref{eq: stopA}, \eqref{eq: stopB1}, and \eqref{eq: stopB2}. The updating scheme \eqref{eq: updates_SSN} implies that
\begin{align*}
&\quad \Phi_k(x^{k+1},y^{k+1},z^{k+1}) - \inf \Phi_k = \phi_k(x^{k+1})-\inf \phi_k \\
&\leq \langle \nabla \phi_k(x^{k+1}), x^{k+1}- \bar{x}^{k+1}\rangle  - \frac{1}{2}\|x^{k+1}-\bar{x}^{k+1} \|^2\\
&\leq \|\nabla \phi_k(x^{k+1})\|  \|x^{k+1}- \bar{x}^{k+1}\|  - \frac{1}{2}\|x^{k+1}-\bar{x}^{k+1} \|^2 \leq \frac{1}{2} \|\nabla \phi_k(x^{k+1})\|^2,
\end{align*}
where $\phi_k(x):=\inf_{y,z} \Phi_k(x,y,z)$ for all $x\in \mathbb{R}^{dn}$, $\bar{x}^{k+1} = \arg\min \phi_k$, and the first inequality follows from the strong convexity of $\phi_k$. Moreover, it can be seen that $(\nabla \phi_k(x^{k+1}),0,0)\in \partial \Phi_k(x^{k+1},y^{k+1},z^{k+1})$. This implies that the stopping criteria \eqref{eq: stopA}, \eqref{eq: stopB1}, and \eqref{eq: stopB2} can be achieved by the following implementable surrogates, 
respectively:
\begin{align*}
    &\|\nabla \phi_k(x^{k+1})\| \leq \varepsilon_k/\sqrt{\sigma_k},\quad \varepsilon_k\geq 0,\quad \sum_{{\color{blue}k=0}}^{\infty} \varepsilon_k <\infty,\\
    &\|\nabla \phi_k(x^{k+1})\|  \leq \delta_k \sqrt{\sigma_k} \|(\mathcal{D}x^{k+1}-y^{k+1},x^{k+1}-z^{k+1} )\|,\quad \delta_k\geq 0, \quad \sum_{k=0}^{\infty} \delta_k <\infty,\\
    &\|\nabla \phi_k(x^{k+1})\|  \leq \delta'_k  \|(\mathcal{D}x^{k+1}-y^{k+1},x^{k+1}-z^{k+1} )\| ,\quad  0\leq \delta'_k\rightarrow 0,
\end{align*}
where $y^{k+1} = {\rm Prox}_{g/\sigma_k}({\cal D}x^{k+1} + v^k/\sigma_k ) $ and $z^{k+1} = {\rm Prox}_{h/\sigma_k}(x^{k+1} + w^k/\sigma_k ) $.

\subsection{Proximal Mappings and Generalized Jacobians}
\label{sec:proximal_mapping}
To make our proposed double-loop algorithm implementable in practice, we need to provide efficient procedures to compute the proximal mappings of $g(\cdot)$ and $h(\cdot)$, together with their generalized Jacobians.

For given $y\in \mathbb{R}^{d |\mathcal{E}|}$ and $\nu>0$, the proximal mapping ${\rm Prox}_{\nu g}(y)$ can be computed as:
\begin{align*}
\mathcal{P}_{l(i,j)}{\rm Prox}_{\nu g}(y) = {\rm Prox}_{\nu \gamma_1 w_{ij}\|\cdot\|_2}(\mathcal{P}_{l(i,j)}y),\quad l(i,j)\in \mathcal{E},
\end{align*}
and the generalized Jacobian $\partial {\rm Prox}_{\nu g}(y): \mathbb{R}^{d |\mathcal{E}|}\rightrightarrows \mathbb{R}^{d |\mathcal{E}|}$ takes the form as:
\begin{align*}
\partial {\rm Prox}_{\nu g}(y) = \left\{
Q \in \mathbb{R}^{d |\mathcal{E}|\times d |\mathcal{E}|}\middle\vert
\begin{aligned}
&Q = {\rm Diag}(\{Q_{l(i,j)}\}_{l(i,j)\in \mathcal{E}}),\\
&Q_{l(i,j)} \in \partial {\rm Prox}_{\nu \gamma_1 w_{ij}\|\cdot\|_2}(\mathcal{P}_{l(i,j)}y) , \ l(i,j)\in \mathcal{E}
\end{aligned}
\right\}.
\end{align*}
Here, for any $u\in\mathbb{R}^n$ and $\eta>0$, we have that
\begin{align*}
&{\rm Prox}_{\eta\|\cdot\|_2}(u) = \left\{\begin{aligned}
& u - \frac{\eta}{\|u\| } u  && \mbox{if } \|u\|>\eta \\
& 0 && \mbox{otherwise}
\end{aligned}
\right. ,\\
&\partial {\rm Prox}_{\eta\|\cdot\|_2}(u)
= \left\{
\begin{aligned}
& \left\{\left( 1-\frac{\eta}{\|u\|}\right)I_n + \frac{\eta uu^{\intercal}}{\|u\|^3} \right\} && \mbox{if }\|u\|>\eta\\
& \left\{ t \frac{ uu^{\intercal}}{\eta^2}  \middle\vert 0\leq t\leq 1 \right\} && \mbox{if }\|u\|=\eta\\
&\{0\} && \mbox{otherwise}
\end{aligned}
\right..
\end{align*}
Moreover, according to \cite[Lemma 2.1]{zhang2020efficient}, ${\rm Prox}_{\eta\|\cdot\|_2}$ is strongly semismooth with respect to $\partial {\rm Prox}_{\eta\|\cdot\|_2}$.

For given $z\in \mathbb{R}^{d n}$ and $\nu>0$, from the definition of the function $h(\cdot)$, it can be seen that
\begin{align*}
{\rm Prox}_{\nu h}(z) &= \underset{x\in \mathbb{R}^{d n}}{\arg\min}  \Big\{
\frac{1}{2}\|x-z\|^2 + \nu \gamma_2 \sum_{i=1}^n \|{\cal M}^{i} x\|_*\Big\}\\
&=\underset{x\in \mathbb{R}^{d n}}{\arg\min}  \Big\{
\sum_{i=1}^n \left(\frac{1}{2}\|{\cal M}^{i} x-{\cal M}^{i} z\|^2 + \nu \gamma_2  \|{\cal M}^{i} x\|_*\right)\Big\}.
\end{align*}
This implies that ${\rm Prox}_{\nu h}(z)\in \mathbb{R}^{d n}$ satisfies that 
\begin{align*}
{\cal M}^{i}  {\rm Prox}_{\nu h}(z) =   \underset{U\in \mathbb{R}^{d_1\times d_2}}{\arg\min} \ \Big\{ \frac{1}{2}\|U-{\cal M}^{i} z\|_F^2 + \nu \gamma_2  \|U\|_*\Big\}  = {\rm Prox}_{\nu\gamma_2\|\cdot\|_*}({\cal M}^{i} z), \ i \in [n].
\end{align*}
Therefore, we need to characterize the proximal mapping and the associated generalized Jacobian of the nuclear norm, which is stated in the following proposition.

\begin{proposition}\label{prop: nuclearnorm}
Assume $d_1\geq d_2$. Given $\gamma>0$ and $G\in \mathbb{R}^{d_1\times d_2}$ with the singular value decomposition $G = U[{\rm Diag}(\sigma);0] V^{\intercal}$, where $U\in \mathbb{R}^{d_1\times d_1}$, $V\in \mathbb{R}^{d_2\times d_2}$ are orthogonal matrices, $\sigma \in \mathbb{R}^{d_2}$ is the vector of the singular values of $G$, with $\sigma_1\geq \sigma_2\geq \cdots\geq \sigma_{d_2}\geq 0$. Let the scalar function $\varpi_{\gamma}(\cdot)$ be defined as $\varpi_{\gamma}(t) = (t-\gamma)_{+}$ for any $t\geq 0$. Then we have the following statements hold.
\begin{itemize}[left=5pt, labelsep=3pt, itemsep=2pt]	
\item[{\rm (a)}] The proximal mapping associated with $\gamma \|\cdot\|_*$ at $G$ can be computed as
\begin{align*}
{\rm Prox}_{\gamma \|\cdot\|_*}(G) = U[{\rm Diag}(\varpi_{\rho}(\sigma)); 0] V^{\intercal},
\end{align*}
where $\varpi_{\rho}(\sigma)\in \mathbb{R}^{d_2}$ is a vector such that its $i$-th component is given by $\varpi_{\rho}(\sigma_i)$.

\item[{\rm (b)}] The function ${\rm Prox}_{\gamma \|\cdot\|_*}$ is strongly semismooth everywhere in $\mathbb{R}^{d_1\times d_2}$ with respect to $\partial {\rm Prox}_{\gamma \|\cdot\|_*}$.

\item[{\rm (c)}] Decompose $U\in \mathbb{R}^{d_1\times d_1}$ into the form $U = [U_1\ U_2]$, where $U_1\in \mathbb{R}^{d_1\times d_2}$ and $U_2 \in \mathbb{R}^{d_1\times (d_1-d_2)}$. Define the index sets
\begin{align*}
&\alpha = \{1,\cdots,d_2\},\quad \gamma = \{d_2+1,\cdots,2d_2\},\quad \beta = \{2d_2+1,\cdots,d_1+d_2 \},\\
&\alpha_1 = \{i\in [d_2] \mid \sigma_i > \gamma\},\quad \alpha_2 = \{i\in [d_2] \mid \sigma_i = \gamma\},\quad
\alpha_3 = \{i\in [d_2] \mid \sigma_i < \gamma\}.
\end{align*}
Then we can construct $\mathcal{Q}^0\in \partial {\rm Prox}_{\gamma \|\cdot\|_*}(G)$, where the operator $\mathcal{Q}^0:\mathbb{R}^{d_1\times d_2}\rightarrow \mathbb{R}^{d_1\times d_2}$ is defined as: 
\begin{align*}
\mathcal{Q}^0[W] = \left[ U_1 \left( \Gamma_{\alpha \alpha} \circ \left( \frac{W_1+W_1^{\intercal}}{2}\right) + \Gamma_{\alpha \gamma} \circ \left( \frac{W_1-W_1^{\intercal}}{2}\right) \right)  + U_2 \left( \Gamma_{\alpha \beta}\circ W_2\right) \right]V^{\intercal},
\end{align*}
for any $W\in \mathbb{R}^{d_1\times d_2}$. Here $W_1 = U_1^{\intercal} W V$, $W_2 = U_2^{\intercal} W V$, and $\Gamma_{\alpha \alpha}^{0}\in \mathbb{S}^{d_2}$, $\Gamma_{\alpha \gamma}\in \mathbb{S}^{d_2}$, $\Gamma_{\beta \alpha }\in \mathbb{R}^{(d_1-d_2)\times d_2}$ are defined as
\setlength{\arraycolsep}{1pt}
\begin{align*}
\Gamma_{\alpha \alpha}  = \begin{pmatrix}
1_{\alpha_1 \alpha_1} & 1_{\alpha_1 \alpha_2} & \tau_{\alpha_1 \alpha_3} \\
1_{\alpha_2 \alpha_1} & 0 & 0\\
\tau_{\alpha_1 \alpha_3}^{\intercal} & 0 & 0
\end{pmatrix}, \quad \Gamma_{\alpha \gamma} = \begin{pmatrix}
\omega_{\alpha_1\alpha_1} & \omega_{\alpha_1\alpha_2} & \omega_{\alpha_1\alpha_3}\\
\omega_{\alpha_1\alpha_2}^{\intercal}  & 0 & 0\\
\omega_{\alpha_1\alpha_3}^{\intercal}  & 0 & 0
\end{pmatrix},\quad
\Gamma_{\beta \alpha} = \begin{pmatrix}
\mu_{\beta \alpha_1} & 0
\end{pmatrix}
\end{align*}
with $\tau_{\alpha_1 \alpha_3}\in \mathbb{R}^{|\alpha_1|\times |\alpha_3|}$, $\omega_{\alpha_1\alpha}\in \mathbb{R}^{|\alpha_1|\times d_2}$, $\mu_{\beta \alpha_1 }\in \mathbb{R}^{(d_1-d_2) \times |\alpha_1| }$ taking the forms of
\begin{align*}
&\tau_{ij} = \frac{\sigma_i - \gamma}{\sigma_i -\sigma_j}, \quad \mbox{for } i \in \alpha_1,\ j\in \alpha_3,\\
&\omega_{ij} = \frac{\sigma_i-\gamma + (\sigma_j-\gamma)_{+}}{\sigma_i+\sigma_j},\quad \mbox{for } i \in \alpha_1,\ j\in \alpha,\\
& \mu_{ij} = \frac{\sigma_j-\gamma}{\sigma_j},\quad \mbox{for } i\in \beta,\ j\in \alpha_1, \quad \mbox{where }\beta = \{1,\cdots,d_1-d_2\}.
\end{align*}
\end{itemize}
\end{proposition}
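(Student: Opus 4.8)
The plan is to treat all three parts through the unified lens of spectral (singular-value) operators, i.e.\ maps of the form $X \mapsto U\,{\rm Diag}(f(\sigma(X)))\,V^{\intercal}$ generated by a scalar function $f$ acting on the singular values, here $f = \varpi_{\gamma}$. For part~(a), I would first record that $\gamma\|\cdot\|_*$ is unitarily invariant, so its Moreau envelope inherits the same invariance. Using von Neumann's trace inequality, $\langle X, G\rangle \le \sum_i \sigma_i(X)\,\sigma_i(G)$ with equality precisely when $X$ and $G$ admit a simultaneous singular value decomposition, the problem $\min_X \tfrac12\|X-G\|_F^2 + \gamma\|X\|_*$ decouples: any minimizer must share the singular vectors $U,V$ of $G$, and its singular values solve the separable scalar problems $\min_{t\ge 0}\ \tfrac12(t-\sigma_i)^2 + \gamma t$, whose unique solution is the soft-thresholding value $(\sigma_i-\gamma)_+ = \varpi_{\gamma}(\sigma_i)$. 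This yields the stated formula.

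For part~(b), I would observe that the scalar generator $\varpi_{\gamma}(t) = (t-\gamma)_+$ is piecewise affine, hence strongly semismooth everywhere. Strong semismoothness of the induced singular-value operator ${\rm Prox}_{\gamma\|\cdot\|_*}$ then follows from the standard transfer principle for spectral matrix-valued functions, under which the matrix map inherits the (strong) semismoothness of its scalar generator; I would invoke the corresponding established result rather than reprove it.

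Part~(c) is where the real work lies, and I expect it to be the main obstacle. The goal is to exhibit one explicit element of the Clarke generalized Jacobian $\partial {\rm Prox}_{\gamma\|\cdot\|_*}(G)$. My plan is to perform a first-order perturbation analysis of the SVD of $G$: writing $G + tW$, expanding its singular values and singular vectors to first order, and differentiating $U(t)\,{\rm Diag}(\varpi_{\gamma}(\sigma(t)))\,V(t)^{\intercal}$ at $t=0$. The divided-difference structure of the $\Gamma$ matrices emerges naturally, the off-diagonal entries being governed by quotients $\tfrac{\varpi_{\gamma}(\sigma_i)-\varpi_{\gamma}(\sigma_j)}{\sigma_i-\sigma_j}$ together with their $\sigma_i+\sigma_j$ counterparts that arise from the coupling of left and right singular directions. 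The two delicate points are, first, the partition of $[d_2]$ into the blocks $\alpha_1,\alpha_2,\alpha_3$ according to whether $\sigma_i>\gamma$, $\sigma_i=\gamma$, or $\sigma_i<\gamma$, since $\varpi_{\gamma}$ is nondifferentiable exactly at $\gamma$; and second, the symmetric/antisymmetric splitting $W_1 = \tfrac12(W_1+W_1^{\intercal}) + \tfrac12(W_1-W_1^{\intercal})$ combined with the rectangular block $W_2 = U_2^{\intercal}WV$ coming from the extra $d_1-d_2$ left singular directions in the $U_2$ part of $U$.

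To finish, I would select, on the set-valued block $\alpha_2$ where the derivative is genuinely multivalued, a valid extreme value of the admissible interval so that $\mathcal{Q}^0$ is a single well-defined operator, and then verify that $\mathcal{Q}^0$ is symmetric, positive semidefinite, and indeed a legitimate limit of Jacobians of ${\rm Prox}_{\gamma\|\cdot\|_*}$ at nearby differentiability points, placing it in the Clarke generalized Jacobian. The bookkeeping across the index blocks $\alpha,\gamma,\beta$ and the correct placement of the divided differences $\tau_{ij}$, $\omega_{ij}$, $\mu_{ij}$ is the most error-prone step, so I would organize the derivation so that each block of entries in $\Gamma_{\alpha\alpha}$, $\Gamma_{\alpha\gamma}$, and the rectangular part is read off directly from a single perturbation identity for the singular subspaces.
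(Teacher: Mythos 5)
Your proposal is essentially correct, but it takes a much more self-contained route than the paper, whose entire proof of this proposition is a citation: part (a) is attributed to the singular value thresholding results of Cai--Cand\`es--Shen and Ma--Goldfarb--Chen, part (b) to \cite[Theorem~2.1]{jiang2014partial}, and part (c) to \cite[Lemma~2.3.6 and Proposition~2.3.7]{zhe2009study} together with the relation $\partial_B {\rm Prox}_{\gamma\|\cdot\|_*} \subseteq \partial {\rm Prox}_{\gamma\|\cdot\|_*} = {\rm conv}(\partial_B {\rm Prox}_{\gamma\|\cdot\|_*})$. Your von Neumann trace-inequality argument for (a) is the standard correct derivation of soft-thresholding of singular values, and your appeal to the transfer principle for (b) matches in spirit what the cited theorem provides (the scalar generator $\varpi_\gamma$ is piecewise affine, hence strongly semismooth, and the induced singular-value operator inherits this). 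For (c), your plan is workable but you should be explicit that the na\"ive first-order perturbation of the SVD is only valid at points where the singular values are simple, nonzero, and distinct from $\gamma$; the content of the cited references is precisely the characterization of $\partial_B{\rm Prox}_{\gamma\|\cdot\|_*}(G)$ as limits of Fr\'echet derivatives from such generic points, which is what legitimizes the divided-difference matrices $\Gamma_{\alpha\alpha}$, $\Gamma_{\alpha\gamma}$, $\Gamma_{\beta\alpha}$ at a degenerate $G$ (repeated singular values, $\sigma_i=\gamma$, or $\sigma_i=0$). Note also that for membership in the Clarke Jacobian it suffices to exhibit $\mathcal{Q}^0$ as such a limit; symmetry and positive semidefiniteness are consequences (via firm nonexpansiveness of the proximal mapping), not prerequisites. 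What your approach buys is a proof readable without chasing references; what the paper's approach buys is brevity and the avoidance of the delicate degenerate-point analysis, which is genuinely the hardest part of your sketch.
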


\begin{proof}
(a) follows from the results in \cite{cai2010singular,ma2011fixed}, and (b) comes from \cite[Theorem 2.1]{jiang2014partial}. As for (c), according to \cite[Lemma 2.3.6 and Proposition 2.3.7]{zhe2009study} and the fact that for any $U\in \mathbb{R}^{d_1\times d_2}$,
\begin{align*}
\partial_B {\rm Prox}_{\gamma \|\cdot\|_*}(U)\subseteq \partial {\rm Prox}_{\gamma \|\cdot\|_*}(U) = {\rm conv}(\partial_B {\rm Prox}_{\gamma \|\cdot\|_*}(U)),
\end{align*}
we can have the desired conclusion.
\end{proof}

A direct result from Proposition \ref{prop: nuclearnorm} is as follows, which is used in the proof of our main theorems in Section \ref{sec: finite_prediction}.

\begin{proposition}\label{prop: nuc2}
For any $X,Y \in \mathbb{R}^{d_1\times d_2}, \ d_1 \geq d_2 $ and $\gamma > 0$, it holds that
\begin{itemize}
\item[{\rm (a)}] $\|X - {\rm Prox}_{\gamma \|\cdot\|_*}(X) \|_F \leq \gamma \sqrt{d_2} $;
\item[{\rm (b)}] if $ {\rm Prox}_{\gamma \|\cdot\|_*}(X) = {\rm Prox}_{\gamma \|\cdot\|_*}(Y)  $, then $\|X - Y\|_F \leq \gamma \sqrt{d_2 - r}$, where $r = {\rm rank}({\rm Prox}_{\gamma \|\cdot\|_*}(X))$.
\end{itemize}
\end{proposition}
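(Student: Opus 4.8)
The plan is to derive both parts directly from the singular value thresholding representation of the proximal mapping established in Proposition~\ref{prop: nuclearnorm}(a). Write the full SVD of $X$ as $X = U[{\rm Diag}(\sigma);0]V^{\intercal}$ with $\sigma_1 \ge \cdots \ge \sigma_{d_2} \ge 0$. For part~(a), Proposition~\ref{prop: nuclearnorm}(a) gives ${\rm Prox}_{\gamma\|\cdot\|_*}(X) = U[{\rm Diag}(\varpi_{\gamma}(\sigma));0]V^{\intercal}$, so $X - {\rm Prox}_{\gamma\|\cdot\|_*}(X)$ shares the singular vectors $U, V$ and has singular values $\sigma_i - (\sigma_i - \gamma)_+ = \min(\sigma_i, \gamma) \le \gamma$. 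Since $d_1 \ge d_2$ there are at most $d_2$ of them, and I would conclude $\|X - {\rm Prox}_{\gamma\|\cdot\|_*}(X)\|_F^2 = \sum_{i=1}^{d_2} \min(\sigma_i,\gamma)^2 \le d_2 \gamma^2$, which is precisely the claim in~(a).

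For part~(b), set $Z := {\rm Prox}_{\gamma\|\cdot\|_*}(X) = {\rm Prox}_{\gamma\|\cdot\|_*}(Y)$ and let $Z = P{\rm Diag}(s)Q^{\intercal}$ be a thin SVD with $P \in \mathbb{R}^{d_1\times r}$, $Q \in \mathbb{R}^{d_2\times r}$ and $r = {\rm rank}(Z)$. The defining optimality condition of the proximal mapping yields $X - Z \in \gamma\,\partial\|Z\|_*$ and, simultaneously, $Y - Z \in \gamma\,\partial\|Z\|_*$. Invoking the standard characterization of the nuclear-norm subdifferential (see, e.g., \cite{watson1992characterization}), every element of $\partial\|Z\|_*$ has the form $PQ^{\intercal} + W$ with $P^{\intercal}W = 0$, $WQ = 0$ and $\|W\|_2 \le 1$. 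Hence $X - Z = \gamma(PQ^{\intercal} + W_1)$ and $Y - Z = \gamma(PQ^{\intercal} + W_2)$, and subtracting cancels the common term $\gamma PQ^{\intercal}$, giving $X - Y = \gamma(W_1 - W_2)$.

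It then remains to bound $\|W_1 - W_2\|_F$. Both $W_1$ and $W_2$ have column space contained in ${\rm range}(P)^{\perp}$ and row space contained in ${\rm range}(Q)^{\perp}$, so I would write $W_i = \bar P\, \widetilde W_i\, \bar Q^{\intercal}$ for fixed orthonormal bases $\bar P \in \mathbb{R}^{d_1\times(d_1-r)}$, $\bar Q \in \mathbb{R}^{d_2\times(d_2-r)}$ of these complements, with $\widetilde W_i \in \mathbb{R}^{(d_1-r)\times(d_2-r)}$ and $\|\widetilde W_i\|_2 \le 1$. Then $\|W_1 - W_2\|_F = \|\widetilde W_1 - \widetilde W_2\|_F$, and since $\widetilde W_1 - \widetilde W_2$ has at most $d_2 - r$ nonzero singular values (as $d_2 - r \le d_1 - r$), controlling each of them by $1$ would yield $\|W_1 - W_2\|_F \le \sqrt{d_2 - r}$ and therefore $\|X - Y\|_F \le \gamma\sqrt{d_2 - r}$.

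I expect the \emph{main obstacle} to be exactly this last spectral control of $\widetilde W_1 - \widetilde W_2$: the crude triangle inequality $\|\widetilde W_1 - \widetilde W_2\|_2 \le \|\widetilde W_1\|_2 + \|\widetilde W_2\|_2 \le 2$ only delivers the weaker bound $2\gamma\sqrt{d_2 - r}$, so obtaining the stated constant requires exploiting the finer structure tying $W_1$ and $W_2$ to the same thresholded matrix $Z$. Concretely, $\gamma W_i$ is precisely the sub-threshold tail of the SVD of $X$ (respectively $Y$), whose singular values are the $\sigma_j \le \gamma$ lying outside the top-$r$ block; I would try to combine this common-thresholding structure with the orthogonality of the tails to $P$ and $Q$ to sharpen the singular-value bound on the difference and close the gap.
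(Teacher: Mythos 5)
Part (a) of your argument is correct and is essentially the paper's computation: both reduce to the observation that $X-{\rm Prox}_{\gamma\|\cdot\|_*}(X)$ has singular values $\min(\sigma_i,\gamma)\le\gamma$, of which there are at most $d_2$.

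For part (b) there is a genuine gap, and you have put your finger on exactly where it is. Your reduction $X-Y=\gamma(W_1-W_2)$ with $P^{\intercal}W_i=0$, $W_iQ=0$, $\|W_i\|_2\le 1$ is sound, but the step you defer --- improving $\|W_1-W_2\|_F\le 2\sqrt{d_2-r}$ to $\sqrt{d_2-r}$ --- cannot be carried out: the two tails are the sub-threshold parts of two different matrices, each with its own singular vectors, and nothing ties them together, so the factor $2$ is attained. In fact the statement of (b) itself fails in general: for $d_1=d_2=1$, $X=(\gamma)$ and $Y=(-\gamma)$ both have proximal point $0$, hence $r=0$, yet $\|X-Y\|_F=2\gamma>\gamma\sqrt{d_2-r}=\gamma$. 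The paper's proof obtains the constant $1$ only by writing $Y=U\widetilde{\Sigma}V^{\intercal}$ with the \emph{same} singular vectors $U,V$ as the common proximal point; but ${\rm Prox}_{\gamma\|\cdot\|_*}(X)={\rm Prox}_{\gamma\|\cdot\|_*}(Y)$ only determines the top-$r$ singular subspaces, not the singular vectors of the sub-threshold tails, so that step is unjustified. Your subdifferential route is the correct one and does deliver the true bound $\|X-Y\|_F\le 2\gamma\sqrt{d_2-r}$: since $\gamma W_1$ is exactly the sum of the sub-threshold singular triplets of $X$, one has $\|\gamma W_1\|_F^2=\sum_{j>r}\sigma_j^2\le(d_2-r)\gamma^2$, and likewise for $W_2$; the triangle inequality then finishes. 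That weaker constant is all that is needed downstream in Proposition~\ref{prop: neq} and Theorem~\ref{thm:exact_recovery}(ii) after doubling the term multiplying $\gamma_2$. So: complete your proof by settling for the constant $2$ (and note that it is sharp), rather than searching for an argument that recovers the stated constant.
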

\begin{proof}
The proof is based on  Proposition \ref{prop: nuclearnorm}(a), by which we characterize the singular values of involved matrices.
Consider the singular value decomposition ${\rm Prox}_{\gamma \|\cdot\|_*}(X)  = U \Sigma V^{\intercal}$, where $U\in \mathbb{R}^{d_1\times d_1}$, $V\in \mathbb{R}^{d_2\times d_2}$ are orthogonal matrices, and $\Sigma\in \mathbb{R}^{d_1\times d_2}$ is a rectangular diagonal matrix with singular values
\begin{align*}
\sigma_1\geq \sigma_2\geq \cdots\geq \sigma_{r} > \underbrace{0 =\cdots= 0}_{d_2-r}
\end{align*}
on the diagonal, where $r$ is the rank of ${\rm Prox}_{\gamma \|\cdot\|_*}(X) $. By Proposition~\ref{prop: nuclearnorm}(a), we have
$
X  = U \Sigma' V^{\intercal}
$,
$\Sigma'\in \mathbb{R}^{d_1\times d_2}$ is a rectangular diagonal matrix with singular values
$
\sigma_1+\gamma \geq \cdots\geq \sigma_{r} +\gamma > \sigma_{r+1}\geq \cdots\geq \sigma_{d_2}
$
on the diagonal, which satisfy $\gamma\geq \sigma_{r+1}\geq \cdots\geq \sigma_{d_2}\geq 0$. Then we have
$$
\|X - {\rm Prox}_{\gamma \|\cdot\|_*}(X) \|_F = \|U (\Sigma - \Sigma') V^{\intercal}\|_F = \sqrt{r \gamma^2 + \sigma_{r+1}^2 + \cdots + \sigma_{d_2}^2}
\leq \gamma \sqrt{d_2}.
$$

Secondly, if ${\rm Prox}_{\gamma \|\cdot\|_*}(X) ={\rm Prox}_{\gamma \|\cdot\|_*}(Y)  = U \Sigma V^{\intercal}$, we shall further have $
Y  = U \widetilde{\Sigma} V^{\intercal}
$,
$\widetilde{\Sigma}\in \mathbb{R}^{d_1\times d_2}$ is a rectangular diagonal matrix with singular values
$
\sigma_1+\gamma \geq \cdots\geq \sigma_{r} +\gamma > \tilde{\sigma}_{r+1}\geq \cdots\geq \tilde{\sigma}_{d_2}
$
on the diagonal,  which satisfy $\gamma\geq \tilde{\sigma}_{r+1}\geq \cdots\geq \tilde{\sigma}_{d_2}\geq 0$.
Then we have
$$
\|X - Y \|_F \leq \|U (\Sigma' - \widetilde{\Sigma}) V^{\intercal}\|_F = \sqrt{(\sigma_{r+1} - \tilde{\sigma}_{r+1})^2  + \cdots + (\sigma_{d_2} - \tilde{\sigma}_{d_2})^2}
\leq \gamma \sqrt{d_2-r}.
$$
The proof is completed.
\end{proof}

\bibliographystyle{siamplain}
\bibliography{references}
\end{document}